\newcommand{\rn}{\mathbb R^n}
\newcommand{\s}{\mathbb{S}}
\newcommand{\sn}{\s^{n-1}}
\newcommand{\kno}{\mathcal K^n_o}
\newcommand{\kne}{\mathcal K^n_e}
\newcommand{\supp}{\operatorname{supp}}
\newtheorem{theorem}{Theorem}[section]
\newtheorem{lemma}[theorem]{Lemma}
\newtheorem{corollary}[theorem]{Corollary}
\newtheorem{proposition}[theorem]{Proposition}
\theoremstyle{definition}
\newtheorem{definition}[theorem]{Definition}
\theoremstyle{remark}
\numberwithin{equation}{section}
\newcommand{\hess}[1]{\operatorname{Hess}(#1)}
\newcommand{\R}{\mathbb{R}} 
\newcommand{\conbod}{\mathcal{K}^n}
\newcommand{\vol}{\text{\rm Vol}}
\title[The Weighted $L^p$ Minkowski Problem]{The Weighted $L^p$ Minkowski Problem}
\author{Dylan Langharst}
\address{Department of Mathematical Sciences \\ Carnegie Mellon University \\ Pittsburgh PA 15213 \\ USA}
\email{dlanghar@andrew.cmu.edu}
\author{Jiaqian Liu}
\address{School of Mathematics and Statistics \\ Henan University \\ Jinming Avenue \\ 475001, Kaifeng
\\
China}
\email{liujiaqian@henu.edu.cn}
\author{Shengyu Tang}
\address{Institute of Mathematics \\ Hunan University \\
Lushan S Road \\
410082, Changsha \\ China}
\email{tsy@hnu.edu.cn}
\subjclass[2020]{Primary 52A38, 52A40, Secondary: 52A21, 35J96}
\keywords{Minkowski problem, Gaussian measure, Gaussian Minkowski problem, Weighted Minkowski problem}
\begin{document}
\begin{abstract}
The Minkowski problem in convex geometry concerns showing that a given Borel measure on the unit sphere is, up to perhaps a constant, some type of surface area measure of a convex body. Two types of Minkowski problems in particular are an active area of research: $L^p$ Minkowski problems, introduced by Lutwak and (Lutwak, Yang, and Zhang), and weighted Minkowski problems, introduced by Livshyts. For the latter, the Gaussian Minkowski problem, whose primary investigators were (Huang, Xi and Zhao), is the most prevalent. In this work, we consider weighted surface area in the $L^p$ setting. We propose a framework going beyond the Gaussian setting by focusing on rotationally invariant measures, mirroring the recent development of the Gardner-Zvavitch inequality for rotationally invariant, log-concave measures. Our results include existence for all $p \in \R$ (with symmetry assumptions in certain instances).
We also have uniqueness for $p \geq 1$ under a concavity assumption. Finally, we obtain results in the so-called \textit{small mass regime} using degree theory, as instigated in the Gaussian case by (Huang, Xi and Zhao).
\end{abstract}

\maketitle

\section{Introduction}
\subsection{Motivation}
Given a finite Borel measure $\mu$ on the unit sphere $\sn$ in the $n$-dimensional Euclidean space $\R^n$, one may ask: does there exist a unique (up to translations) convex body $K$ (compact, convex set with non-empty interior) such that $dS_K=d\mu$? Here, $S_K$ is the surface area measure of $K$, which is obtained by pushing the Hausdorff measure on $\partial K$, the boundary of $K$, to $\sn$, i.e.,  for every Borel $A \subset \sn,$ $$S_K(A)=\mathcal{H}^{n-1}(n^{-1}_K(A)),$$ where $\mathcal{H}^{n-1}$ is the $(n-1)$-dimensional Hausdorff measure and $n_K:\partial K \rightarrow \sn$ is the Gauss map, which associates an element $y$ of $\partial K$ with its outer unit normal.

Minkowski's existence theorem \cite[p. 455]{Sh1} shows that if $\mu$ satisfies the following two conditions, then the answer is yes:
\begin{enumerate}
    \item The measure $\mu$ is not concentrated on any great hemisphere, that is
    \label{con_1}
    $$\int_{\sn}\langle \theta, \xi \rangle_+d\mu(\xi) > 0 \quad \text{for all } \theta\in\sn.$$
    \item The measure is centered, that is
    $$\int_{\sn}\xi d\mu(\xi)=0.$$
    \label{con_2}
\end{enumerate}
The concept of the surface area measure of a convex body has seen many extensions over the past century, initiated by a series of papers by Lutwak \cite{LE93,LE96} in the `90s establishing the modern $L^p$ Brunn-Minkowski theory. It will be convenient to denote by $\kno$ the set of all convex bodies containing the origin in their interior, and by $\kne$ the set of all (origin)-symmetric convex bodies. A set $K$ is origin symmetric if $K=-K$. For brevity, we may simply say symmetric.

The question of showing that a given data measure on the sphere is the appropriate surface area measure of a convex body is a \textit{Minkowski problem}. In many instances, one must introduce a constant into the Minkowski problem; in fact, this is more standard than not in the weighted setting (also in the Orlicz setting, see \cite{HLYZ18}). We say that a Minkowski problem is \textit{constant-free} if there is no constant. In certain instances, only \textit{even} data measures $\nu$ on the sphere are considered when solving a Minkowski problem, in which case the convex body whose relevant area measure fits the data will be symmetric.  

The $L^p$ Minkowski problem, concerning the $L^p$ surface area measure $S_{K,p},$ is perhaps the most notable example of a Minkowski problem. Letting the support function of a convex body $K$ be $h_K(x)=\sup_{y\in K}\langle x, y\rangle$, we have $dS_{K,p}=h_K^{1-p}dS_K$ for $K\in\kno$. Existence in the even case was completely solved \cite{LE93,LYZ04,LYZ06,BLYZ13} for $p\geq 0$ (note that, for $p=n$, one must introduce a constant). In fact, there is also uniqueness for $p\geq 1, p\neq n$ (uniqueness when $p=0$ is still an open question, known as the famed Log-Minkowski problem). For $p>1, p\neq n$, in the non-even case, existence and uniqueness were settled in \cite{HLYZ05,CW06}. For $p\in (0,1)$, existence was settled in \cite{CLZ17}. The regime $p<1$, especially uniqueness and non-uniqueness (there are examples of non-uniqueness), is still an active area of research. See, e.g., \cite{AS02,AS03,MI13,HL14,GZ14,GZ15_2,GZ15,JLW15,HLW16,BH16,BHZ16,BH17,BT17,CLZ17,CLZ19,QL19,BBCY19,BBC20,CHLL20,CFL23,EM24,IM24}. The reader is recommended to read the recent survey by B\"or\"oczky \cite{KB23} for a detailed history of the $L^p$ Minkowski problem.

In this work, we will focus on a particular type of Minkowski problem. Given a Borel measure $\mu$ on a metric measure space $(\mathcal{R},d)$, the Minkowski content, or $\mu$-weighted surface area, of a measurable set $A\subset \mathcal{R}$ is given by
\begin{equation}
    \mu^+(\partial A)=\liminf_{\epsilon \to 0}\frac{\mu(A_\epsilon)-\mu(A)}{\epsilon},
    \label{eq:minkowski_content}
\end{equation}
where, as usual, $A_\epsilon = \{x\in\mathcal{R}:d(x,A)\leq \epsilon\}$. In this work, $\mathcal{R}$ will always be $\R^n$ for some $n$ and the metric $d$ will always be the Euclidean distance $|\cdot|$. In this case, $A_\epsilon=A+\epsilon B_2^n.$ Here, $K+L=\{x+y:x\in K,y\in L\}$ is the so-called Minkowski sum of Borel sets $K$ and $L$. 

In our work, we will require that $\mu$ satisfies an {\it isoperimetric inequality}. Such inequalities control the Minkowski content $\mu^+(\partial A)$ of $A$ by $\mu(A)$. Given a probability measure $\mu$ on $\R^n$, its \textit{isoperimetric constant} $h(\mu)$ is the largest constant $h$ such that
\begin{equation}
\label{eq:iso_constant}
\mu^+(\partial A) \geq h\min\{\mu(A),1-\mu(A)\}.\end{equation}
The existence of $h(\mu)$, with bounds, when $\mu$ is log-concave was shown by Kannan, Lov\'{a}sz, and Simonovits \cite{KLS95}, and improved by Bobkov \cite{BG07}. Inequalities of the form \eqref{eq:iso_constant} are called \textit{Cheeger's inequality}. Given a Borel measure $\mu$ on $\R^n$, its \textit{isoperimetric function} $I_\mu$ is given by 
\begin{equation}
    I_\mu(t)=\inf\{\mu^+(\partial A):\mu(A)=t, \;\; t\in (0,\mu(\R^n))\}.
    \label{eq:I_mu}
\end{equation}
Therefore, it is the largest function $I$ such that, for every Borel set $A\subset\R^n$ with $\mu(A)\in (0,\mu(\R^n)),$
\begin{equation} \mu^+(\partial A) \geq I(\mu(A))
\label{eq:Iso_eq}
\end{equation}
However, the infimum in \eqref{eq:I_mu} may be zero in general; we say $\mu$ ``has'' an isoperimetric function if $I_\mu >0$. Please, see the excellent textbook by Ledoux \cite[Page 23]{LM01}. For a concrete example, \eqref{eq:iso_constant} shows $I_\mu$ exists when $\mu$ is a log-concave measure with integrable density. If $B$ is so that $\mu^+(\partial B)=I_\mu(\mu(B))$, then $B$ is said to be an extremal set. In the setting of Borel measures on $\R^n$, the isoperimetric function is essentially only known for the Gaussian and Lebesgue measures. In practice, one usually establishes that a function $I$ satisfies \eqref{eq:Iso_eq} (i.e., a \textit{bound} for $I_\mu$ is shown).

A natural question is a formula for \eqref{eq:minkowski_content}. We will follow the terminology by Livshyts \cite{GAL19}: for a convex body $K$ and a Borel measure $\mu$ on the boundary of $K$ of the form $d\mu(y)=\phi(y)d\mathcal{H}^{n-1}(y)$, the \textit{weighted surface area of $K$ with respect to $\mu$} is defined by
\begin{equation}
    \label{eq:surface_mu}
    S^{\mu}_{K}(E)=\int_{n_K^{-1}(E)}\phi(y)d\mathcal{H}^{n-1}(y)
\end{equation}
for every Borel set $E \subset \sn.$ 

We briefly mention how to define $S^\mu_K$ when $\mu$ is a Borel measure on $\R^n$. Essentially, the problem boils down to determining a canonical method to select how the measure behaves on the boundary of each convex body. We will always work with measures that have density. We recall that a measure $\mu$ on $\R^n$ is said to have density if it is absolutely continuous with respect to the Lebesgue measure, i.e., $d\mu(x)=\phi(x)dx$ for a nonnegative, locally integrable function $\phi$. Under the minor technical assumption that $\mu$ has continuous density, there is no issue. For a given fixed convex body $K$, the minimal required assumption is that $\partial K$, up to a set of $(n-1)$-dimensional Hausdorff measure zero, is included in the Lebesgue set of the density of $\mu$. We can make this precise.

 Under the assumption that $K$ is a convex body and that $\mu$ is a Borel measure on $\R^n$ with density $\phi$ whose Lebesgue set contains $\partial K$, one has that the $\liminf$ in \eqref{eq:minkowski_content} is a limit and 
$$\mu^+(\partial K)=\lim_{\epsilon \to 0}\frac{\mu(K+\epsilon B_2^n)-\mu(K)}{\epsilon}=\int_{\sn}dS^\mu_K(u).$$
If we additionally assume that $K\in\kno$, its weighted $L^p$ surface area is then, for $E\subset\sn$ Borel,
\begin{equation}
\label{eq:l_pSura_weighted}
    S^{\mu}_{K,p}(E) = \int_{n^{-1}_K(E)}\langle y, n_K(y) \rangle^{1-p}\phi(y)d\mathcal{H}^{n-1}(y),
\end{equation}
i.e., one has $dS^\mu_{K,p}=h_K^{1-p}dS^\mu_K$ and $S_{K,1}^\mu=S^\mu_K.$ We outline in Section~\ref{sec:surface_area} formulas for more general versions of \eqref{eq:minkowski_content} that we will need throughout this work. Before listing our results, it is necessary to contextualize our results with respect to the broader literature.

\subsection{Weighted Minkowski Problems} In this paper, we establish a framework of weighted $L^p$ Minkowski problems. Previously, the case of homogeneous measures was considered by Livshyts \cite{GAL19} and Wu \cite{DW17}. The class of measures with radially decreasing densities was considered by Kryvonos and Langharst \cite{KL23}. The Borel measure on $\R^n$ most pertinent to our investigations is the Gaussian measure
$$d\gamma_n(x)=\frac{1}{(2\pi)^{\frac{n}{2}}}  e^{-\frac{|x|^2}{2}} dx.$$
Huang, Xi, and Zhao \cite{HXZ21} first established existence for the even weighted Minkowski problem for the Gaussian measure. Due to the lack of homogeneity, the constant in the problem cannot be removed. This is made precise by the result from K. Ball \cite{Ball93}, who showed that $\gamma^+_n(\partial K) \leq 4 n^\frac{1}{4}$, illustrating the necessity of the constant. Feng, Liu and Xu \cite{FLX23} later removed the even assumption. Liu \cite{JL22} first considered the $L^p$ Gaussian Minkowski problem, $p > 0$, without any symmetry assumptions.  Recently, Feng, Hu and Xu \cite{FHX23} established existence for the even $L^p$ Gaussian Minkowski problem when $p\leq 0$. 

Like many fields of study, the Gaussian measure is a core measure in the theory, and the first to be characterized within the given framework. However, when a field matures, one must inevitably broaden one's horizons and consider other classes of measures. Liu and Tang \cite{LS24} took a step in this direction; motivated by \cite{LYZ05_2,LYZ07,LLYZ12}, they solved the weighted $L^p$ Minkowski problem for $p\in\mathbb{R}\setminus\{0\}$ with weights that were $s$-concave approximations of a Gaussian, i.e., they considered probability measures $\mu$ whose densities were proportional to $(1-s|x|^\alpha)_+^{\beta}$ for various $s,\alpha$ and $\beta$. Hu \cite{JH25} then considered the $p=0$ case. In this work, we study the weighted $L^p$ Minkowski problem for a large class of Borel measures on $\R^n$.
\begin{definition}
A Borel measure $\mu$ on $\R^n$ is said to be {\it rotationally invariant} if, for every Borel set $A\subset \R^n$ and $T\in O(n)$, one has $\mu(TA)=\mu(A)$.
\end{definition}
When $p<0$, we will require that the density of our measures satisfy the following growth regularity. For a set $E\subset \R^n$, we denote by $C^+(E)$ the set of nonnegative, continuous functions $E$.
\begin{definition}
    Fix $p\neq 0$. We say $\phi\in C^+(\R^n\setminus\{0\})$ has fast enough radial decay, or has property \textbf{(D)}$_p$, if $\phi\neq 0$ and, for every $\theta\in\sn$, one has
    $$\lim_{t\to\infty}t^{n-p}\phi(t\theta)=0.$$
\end{definition}
We note that every finite, log-concave measure satisfies Property~\textbf{(D)}$_p$ for all $p$. Thus, there are a plethora of examples.  Now that we have introduced the ambient measures we will consider, we list some technical definitions we need to state our results. Firstly, we set $\R_+=[0,\infty)$ and $$S^\mu_{p}(K):=S^\mu_{K,p}(\sn)$$
when $K\in\kno$. When $p=1$, this is precisely $\mu^+(\partial K).$ 

We also have special terminology used for the $p=0$ case. We recall that B\"{o}r\"{o}czky, Lutwak, Yang, and Zhang \cite{BLYZ13} defined the subspace concentration condition in the context of Minkowski problems; the phenomenon of subspace concentration also appeared in the work of Klartag \cite{BK10}.
\begin{definition}
    Let $\nu$ be a finite Borel measure on $\sn$. The measure $\nu$ is said to satisfy {\it the subspace concentration inequality} if for every subspace $\xi$ of $\R^n$ such that $0<\text{dim}\, \xi <n,$
    \begin{equation}
    \label{eq:sub}
    \nu(\xi \cap \sn) \leq \frac{1}{n}\nu(\sn)\text{dim}\, \xi.
    \end{equation}
    If \eqref{eq:sub} is always strict, then $\nu$ is said to satisfy the {\it strict subspace concentration inequality}.
\end{definition}

With this in hand, we list our first result, which is when the measure on the sphere is even. We emphasize beforehand that, in the $p\geq 0$ cases, $\mu$ has only locally integrable density, while, in the $p < 0$ case, $\mu$ is a finite Borel measure (i.e., $\mu(\R^n)<\infty$). We recall that a function $\psi\in L^1_{\operatorname{loc}}(E)$ if and only if for every bounded Borel set $A\subset E$, one has $\int_A \psi(x) dx <\infty$. Notice that, if $\psi$ is a function on $(0,\infty)$, the function $\psi\left(|\,\cdot\,|\right)$ is locally integrable on $\R^n$ if and only if $\int_{0}^{R}\psi(t)t^{n-1}dt<\infty$ for every $R>0$ from polar coordinate integration. 

\begin{theorem}
\label{t:first_sym}
Let $n\in \mathbb N$, let $p\in \R$ and let $\psi\in C^+(\R_+\setminus\{0\})$ satisfy
\begin{enumerate}
    \item if $p>0$: $\psi\in L^1_{\operatorname{loc}}(\R_+\setminus\{0\})$ and $\int_{0}^{R}\psi(t)t^{n-1}dt<\infty$ for every $R>0$;
    \item if $p=0$: $\psi\in  L^1_{\operatorname{loc}}(\R_+\setminus\{0\})$ and $\int_0^\infty\psi(t)^qt^{n-1}dt<\infty$ for some $q\in [1,\infty)$;
    \item if $p<0$: $\psi\in C^+(\R_+)\cap L^1(\R_+)$ is decreasing, has $\int_0^\infty\psi(t)t^{n-1}dt<\infty$, and satisfies Property \textbf{(D)}$_p$.
\end{enumerate}
Let $\mu$ be the rotationally invariant measure on $\R^n$ with density $\psi(|\cdot|)$. Fix $a \in (0,\mu(\R^n))$. Suppose $\nu$ is an even, finite Borel measure on the sphere which,
 \begin{enumerate}
     \item if $p>0$: is not concentrated on any great hemisphere;
     \item if $p=0$: satisfies the strict subspace concentration inequality;
     \item if $p<0$: is not concentrated on any great subsphere.
 \end{enumerate}
 Then, there exists a symmetric convex body $K=K(a)$ such that $\mu(K)=a$ and 
    $$\nu=\frac{\nu(\sn)}{S^\mu_p(K)}S^\mu_{K,p}.$$
Also, for $p\in \R\setminus\{0\}$, if the density of $\mu$ is positive on $\partial K$, then the converse direction holds as well.
\end{theorem}
Our next result removes the evenness assumption on $\nu$. To achieve this, we require that $\mu$ be finite for all $p$. For ease of presentation, we shall henceforth normalize, without loss of generality, all finite Borel measures on $\R^n$ to probability, i.e., we may assume that $\mu(\R^n)=1$ when $\mu$ is finite.
 
\begin{theorem}
    \label{t:first_finite}
     Let $p\in \R$, let $n\in\mathbb N,$ and let $\psi\in C^+(\R_+\setminus\{0\})$ be such that $\int_{0}^{\infty}\psi(t)t^{n-1}dt<\infty$. Furthermore, normalize $\psi$ so that the rotationally invariant measure $\mu$ on $\R^n$ with density $\psi\left(|\,\cdot\,|\right)$ is probability.
     \begin{enumerate}
         \item If $p\geq 0$, fix $a\in [\frac{1}{2},1)$.
         \item If $p<0$, suppose additionally that $\psi\in C^+(\R_+)$ is decreasing and satisfies Property \textbf{(D)}$_p$. Then, take $a\in (\frac{1}{2},1)$.
     \end{enumerate}
     Suppose $\nu$ is a finite Borel measure on the sphere which,
 \begin{enumerate}
     \item if $p>0$: is not concentrated on any great hemisphere;
     \item if $p=0$: satisfies the strict subspace concentration inequality;
     \item if $p<0$: is not concentrated on any great subsphere.
 \end{enumerate}
   Then, there exists $K=K(a)$ in $\kno$ such that $\mu(K)=a$ and 
    $$\nu=\frac{\nu(\sn)}{S^\mu_p(K)}S^\mu_{K,p}.$$ 
    If $\nu$ is even, then $K$ can be taken to be symmetric. For $p\in \R\setminus\{0\}$, if the density of $\mu$ is positive on $\partial K$, then the converse direction holds as well. 
\end{theorem}

Please see Section~\ref{sec:constants} (for $p>0$), Section~\ref{sec:p_zero} (for $p=0$) and Section~\ref{sec:neg_p} (for $p<0$) to find the proofs of each case of Theorems~\ref{t:first_sym} and \ref{t:first_finite}. Concerning Theorem~\ref{t:first_finite}, the proof of each case uses in a critical way that $\mu$ is finite. Thus, that theorem says nothing about non-finite, rotationally invariant measures (e.g., volume). This is not surprising; we essentially used $\mu$ being finite to circumvent any requirement that $\nu$ has its barycenter at the origin. On the other hand, the $p=0$ case of Theorem~\ref{t:first_sym} holds for the Lebesgue measure as well; at a key step, we use H\"older's inequality to control $\mu$-measures of convex bodies by their volumes, a step that is trivially unnecessary if $\mu$ is Lebesgue. However, for Lebesgue measure, assuming the \textit{strict} subspace concentration inequality is more restrictive than necessary. It is well known that for volume, when $\nu$ is even, the weaker \textit{subspace concentration condition} is necessary and sufficient \cite{BLYZ13}. Recall that $\nu$ is said to satisfy the subspace concentration condition precisely when equality occurring in \eqref{eq:sub} for a subspace $\xi$ implies that there exists a subspace $\xi^\prime$ complementary to $\xi$ so that equality occurs in \eqref{eq:sub} for $\xi^\prime$.

The reader may wonder why we restrict our attention to the strict subspace concentration inequality for general measures $\mu$, rather than pursuing this full condition. The reason is that the equality cases behave differently for non-Lebesgue measures. For example, the even Minkowski problem for the Gaussian cone volume measure, roughly corresponding to the $p=0$ case of Theorem~\ref{t:first_sym}, was recently completed by Hu \cite{JH24}. In that work, she constructed a family of bodies that yields asymptotic equality in the strict subspace concentration inequality when $\nu$ is the Gaussian cone volume measure. A similar phenomenon was observed in the so-called chord log-Minkowski problem \cite{LXYZ24}. For this reason, we did not pursue the full subspace concentration condition in this work, as it seems to be a volume-esque phenomenon.

\subsection{Existence Results in the small mass regime}
Intuitively, the Gaussian measure  behaves like the Lebesgue measure on convex bodies containing the origin with an asymptotically small measure. Huang, Xi and Zhao initiated a program on analyzing the constant-free, even Gaussian Minkowski problem by requiring $K$ to have such a small measure. Feng, Liu and Xu \cite{FLX23} removed the symmetry assumptions.  We will call this program the \textit{small mass regime}. We remark here that the small mass regime in the planar case for the Gaussian measure was thoroughly investigated by Chen, Hu, Liu and Zhao in the beautiful work \cite{CHLZ23}. Liu \cite{JL22} showed the existence of a solution to the constant-free, even, $L^p$ Gaussian Minkowski problem, $p\geq 1$ in the small mass regime. Feng, Hu and Xu \cite{FHX23} later removed the assumption of symmetry. Tang \cite{ST24} recently showed, when $1\leq p < n$, the existence of a solution to the constant-free, even, $L^p$ Gaussian Minkowski problem in the small mass regime, with a twist: the body $K$ that solves the problem satisfies $\gamma_n(K) <1/2$, opposite to the results mentioned above.

Our next set of results uses degree theory to work in the small mass regime. We will make use of the isoperimetric function $I_\mu$ from \eqref{eq:Iso_eq}. However, when $p\geq n$, the property \textbf{(D)}$_p$ is automatically implied by the other hypotheses on $\psi$, and we can actually avoid the use of the isoperimetric function. This is not surprising; the associated Monge-Amp\`ere equation becomes much more pliable to classical PDE techniques. 

Moreover, we will need to impose an additional assumption on $\mu$.  We recall that a function $\phi:\R^n\to \R_+$ is said to be radially decreasing if, for every $t\in (0,1)$ and $x\in \R^n$, $\phi(tx)\geq \phi(x)$.   

\begin{definition}
    We say a measure $\mu$ on $\R^n$ is {\it radially decreasing} if it has a locally integrable, radially decreasing density that is continuous on $\R^n\setminus\{0\}$.
\end{definition}
We remark that a special case of \cite[Theorem 1.2]{KL23} by Kryvonos and Langharst is the even, weighted $L^p$ Minkowski problem (with a constant) for all finite, radially decreasing measures on $\R^n$. The results mentioned in Theorems~\ref{t:first_sym} and \ref{t:first_finite} are more exhaustive. Even with radial decay, our results in the small mass regime for $p\in [1,n)$ require another technical property, which we call \textbf{(S)}$_p$. We postpone its definition to Section~\ref{sec:neg_p} (see Definition~\ref{def:Sp}). It would be unnatural to state our results at the level of this technical assumption. 

Instead, we introduce the following natural class of Borel measures on $\R^n$ which satisfy this property. In particular, we will be mirroring the recent development of the Gardner-Zvavitch inequality: as conjectured by Gardner and Zvavitch \cite{GZ10} (with the necessity of symmetry shown in \cite{PT13}) and resolved by Eskenazis and Moschidis \cite{EM21} (using the tools established in \cite{KL22}), $\gamma_n$ is $(1/n)-$concave over the class $\kne$ (see Section~\ref{sec:concavity} to recall the definition of concavity for measures). Not too long afterwards, this was extended to a larger set of Borel measures.  We define the following class of Borel measures on $\R^n$: 
\begin{equation}
\label{eq:CER}
\begin{split}
\mathcal{M}_n^\infty=\bigg\{\mu:\frac{d\mu}{dx}&=e^{-V(|x|)}, \;\;\text{where } V:\R_+\setminus\{0\}\to(-\infty,\infty] \mbox{ is increasing, satisfies, for every}
\\
&R>0,\,\int_{0}^{R}e^{-V(t)}t^{n-1}dt<\infty, \text{ and the function } t\mapsto V(e^t) \text{ is convex}.\bigg\}
\end{split}
\end{equation}
Cordero-Erausquin and Rotem \cite{CER23} extended the result by Eskenazis and Moschidis to every measure $\mu\in\mathcal{M}_n^\infty,$ i.e., every $\mu\in\mathcal{M}_n^\infty$ is $(1/n)$-concave over the class $\kne$. The class $\mathcal{M}_n^\infty$ then contains every rotationally invariant, log-concave measure. But it also contains more. For example, it contains the Cauchy-type measures given by $d\mu_{q,b}(x)=(1+|x|^b)^{-q}dx$ for $q,b \geq 0$. Another group of examples includes: if $\mu_0\in \mathcal{M}_n^\infty$, then so is also $d\mu_q(x)=|x|^{-q}d\mu_0(x)$ for $q\geq 0$. Here, $\mu_0$ can even be taken as the Lebesgue measure. Clearly, every $\mu\in\mathcal{M}_n^\infty$ is radially decreasing. More pertinently, we show in Proposition~\ref{p:uni_3} that Property \textbf{(S)}$_p$ holds for $\mu\in\mathcal{M}_n^\infty$.

Our previous results apply to the class $\mathcal{M}_n^\infty$. Indeed, the regime $p\geq 0$ of Theorem~\ref{t:first_sym} applies to all of $\mathcal{M}_n^\infty$. As for the results requiring finite measures, we define the following subclass of probability measures
\[
\mathcal{M}_n=\{\mu\in \mathcal{M}_n^\infty:\mu(\R^n)=1\}.
\]
Then, the $p\geq 0$ regimes of Theorem~\ref{t:first_finite} applies to $\mathcal{M}_n$. For those members of $\mathcal{M}_n$ whose densities satisfy Property~\textbf{(D)}$_p$, we may apply the $p<0$ case of Theorems~\ref{t:first_sym} and \ref{t:first_finite}. Note that we only need to explicitly impose Property~\textbf{(D)}$_p$ for measures in $\mathcal{M}^\infty_n$ that are not necessarily log-concave.  Consider, for example, the measure $\mu$ with density $\phi(x)=|x|^{-q}$ for some $q>0$ to be determined. Clearly, $\mu\in\mathcal{M}^\infty_n$. To satisfy condition \textbf{(D)}$_p$, we need $n-p < q.$ But, to be integrable near the origin, we need $q < n$. Thus, as long as $p$ and $q$ satisfy $p > n-q > 0$, $\mu$ will be a finite measure that satisfies \textbf{(D)}$_p$.

The following theorem is an application of the more general, but technical, Theorems~\ref{degree theory}, \ref{degree theory_2} and \ref{continuity method} and Lemma~\ref{l:lp_iso}. Here, $\kappa_n$ denotes the volume of the Euclidean unit ball in $\R^n$.

\begin{theorem}\label{t:small_mass_best}
		Fix $p \geq 1$. Let $\mu\in\mathcal{M}_n$ with density $\psi(|\cdot|)$ such that $\psi \in C^1(\R_+)$. Let $\nu$ be a finite Borel measure on $\sn$, not concentrated on any great hemisphere.
  \begin{enumerate}

  \item If $p>n$, then, there exists $K\in\kno$ such that $S^\mu_{K,p}=\nu$.
  \item If $p=n$, and if $\nu$ additionally satisfies $\nu(\sn)\leq \psi(0)n\kappa_n$, then there exists $K\in\kno$ such that $S^{\mu}_{K,n}=\nu$.
    \item If $\psi$ satisfies Property \textbf{(D)}$_p$ and if $\mu$ has an isoperimetric function $I_\mu$: then,  
  \begin{enumerate}
  \item  If $1\leq p<n$, then there exists $a>0$ depending only on $\mu$ sufficiently close to $1$, such that if $\nu$ is even and $$\nu(\sn) < \left(n\min\left\{aI_{\mu}(a)^\frac{p}{1-p}, \left(1-a\right)I_\mu(1-a)^\frac{p}{1-p}\right\}\right)^{1-p},$$
      then, there exist $K_1,K_2\in\kne$ such that $\mu(K_1)>a, \mu(K_2)<1-a,$ and
        $$S^\mu_{K_1,p} = \nu = S^\mu_{K_2,p}.$$
  \item 
        If $1\leq p<\infty$ and if $\nu$ satisfies $$\nu(\sn) <\left(\frac{n}{2}\right)^{1-p}I_\mu\left(\frac{1}{2}\right)^{p},$$ then there exists $K\in\kno$ such that $\mu(K)\geq \frac{1}{2}$ and $\nu = S^\mu_{K,p}.$
    \end{enumerate}
    \end{enumerate}
	\end{theorem}

\subsection{Concavity of Measures}
\label{sec:concavity}
For our uniqueness results to make sense, we must first discuss concavity of measures, in particular the so-called $F$-concave measures explored in \cite{GAL19, FLMZ23, FLMZ26,LMNZ17}. We say a Borel measure $\mu$ is $F$-concave, where $F$ is an invertible, (strictly) monotonic, continuous function, if there exists a collection $\mathcal{C}$ of Borel sets with finite $\mu$-measure such that, for every $K,L\in\mathcal{C}$ and every $0<\lambda <1$, one has 
 \begin{equation}
	\label{eq:concave}
	\mu\left((1-\lambda)K+\lambda L\right)\geq F^{-1}\left((1-\lambda) F(\mu(K)) +\lambda F(\mu(L))\right).
	\end{equation}
For a fixed $s\in[-\infty,\infty)$, a measure $\mu$ is $s$-concave over $\mathcal{C}$ if \eqref{eq:concave} holds for $F(x)=x^s$, i.e., for every $\lambda\in(0,1)$ and Borel sets $K,L\in\mathcal{C}$ one has
$$\mu((1-\lambda)K + \lambda L) \geq \left((1-\lambda)\mu(K)^s+\lambda \mu(L)^s\right)^\frac{1}{s}.$$
The case $s=0$ is log-concavity: 
$$\mu((1-\lambda)K + \lambda L) \geq \mu(K)^{1-\lambda}\mu(L)^\lambda.$$
When $s=-\infty$, this should be read as $\mu((1-\lambda)K+\lambda L) \geq \min\{\mu(K),\mu(L)\}$. A function $f:\R^n\to [0,\infty]$ is said to be $\kappa$-concave, $\kappa\in \R$, if, for every $\lambda \in (0,1),$
$$f((1-\lambda)x+\lambda y) \geq \left((1-\lambda)f(x)^\kappa + \lambda f(y)^\kappa\right)^\frac{1}{\kappa}$$
for every $x,y$ such that $0< f(x)f(y) < \infty$. The case $\kappa=0$ means $f$ is $\log$-concave, $\kappa=\infty$ means $f$ is constant.

In addition to the aforementioned result by Cordero-Erausquin and Rotem \cite{CER23}, showing that every $\mu\in\mathcal{M}_n^\infty$ is $(1/n)$-concave over $\kne$, Aishwarya and Rotem \cite{AR26} showed that, if $V$ is a convex, $q$-homogeneous function and $\mu$ is a Borel measure such that $d\mu(x)=Ce^{-V(x)}dx,$ for $C>0$, then $\mu$ is $\frac{q-1}{qn}$-concave over $\kno$. When $V(x)=\frac{1}{2}|x|^2$, then one obtains that $\gamma_n$ is $\frac{1}{2n}$-concave over $\kno$, which was previously shown by Kolesnikov and Livshyts \cite{KL21}.

Recall that a Borel measure is Radon if it is locally finite and (inner) regular. Borell's \cite{Bor73} classification states that a Radon measure $\mu$ is an $s$-concave measure (with $\mathcal{C}$ all Borel subsets of $\R^n$) if and only if it has density with respect to the Lebesgue measure that is $\kappa=s/(1-ns)$-concave (so, if $s=\frac{1}{n}$, the measure is a multiple of the Lebesgue measure, and, if $s > 1/n$, the density is zero a.e.). Henceforth, we say such measures are $s$-concave in the sense of Borell. For brevity, we may simply say $s$-concave (without referencing some class $\mathcal{C}$). 

Recall that a measure is said to be $\alpha$-homogeneous if, for a Borel set $A$, one has $\mu(tA)=t^\alpha\mu(A)$, $\alpha,t>0$, for all $t$ such that $tA$ is in the support of $\mu$. The Lebesgue measure, which we denote as $\vol_n$, is an $n$-homogeneous, $\frac{1}{n}$-concave measure. To illustrate the use of $s$-concave measures in Minkowski problems, Livshyts \cite{GAL19} considered and solved the constant-free, even Minkowski problem for a measure that is $\alpha$-homogeneous, $\frac{1}{\alpha}$-concave, in the sense of Borell, when $\alpha\geq n$; the result by Livshyts was extended to the $L^p$ case, $p\geq n,$ by Wu \cite{DW17}. Kryvonos and Langharst \cite{KL23} then did all $p\geq 1$.

For another example, the Gaussian measure $\gamma_n$ is $\log$-concave. In fact, $\gamma_n$ is concave in the sense of \eqref{eq:concave} with $F$ being the inverse function of $\gamma_1((-\infty,x))$, the so-called Ehrhard inequality \cite{EHR1,EHR2,Lat96,Bor03}; when $\mathcal{C}$ is taken to be $\conbod,$ then there is equality in Ehrhard's inequality if and only if the two bodies under consideration are identical (the Ehrhard inequality still holds with $\mathcal{C}$ taken to be all Borel sets, but then equality conditions are still open). Using the Ehrhard inequality in the case of convex bodies, and the additional assumption that $\gamma_n(K)\geq 1/2$, Huang, Xi and Zhao \cite{HXZ21} obtained uniqueness in the even Gaussian Minkowski problem (this is a special case of Proposition~\ref{p:uni_0_1} below).

\subsubsection{Uniqueness Results}

For our uniqueness results, we need to refer to Firey's $L^p$ summation \cite{Firey62}, denoted by $+_p$. Note that $p=1$ is the usual Minkowski sum. We say $\mu$ is $L^p$ $F$-concave if it satisfies \eqref{eq:concave} with Minkowski summation replaced by $L^p$ summation. Note that $L^p$ means, of the form $(1-\lambda)\cdot K+_p \lambda \cdot L$ for $\lambda \in (0,1)$ and $K,L\in\kno$, are increasing with respect to set-inclusion as $p$ increases. Thus, if $\mu$ is $F$-concave, then it is also $L^p$ $F$-concave (with the same $F$) for $p>1$.  If $\mu$ is shown to be $L^p$ $F$-concave from this method (i.e., if one already knows $\mu$ is $F$-concave, and then uses monotonicity of $L^p$ means to obtain $\mu$ is $L^p$ $F$-concave), then equality in the $L^p$ version of \eqref{eq:concave}, $p>1$, yields $K=L$, solely from equality forcing the $L^p$ mean to be independent of $p$. 

However, this does not use the $L^p$ summation in a ``smart" way. For example, Roysdon and Xing \cite{RX21} showed, extending on the volume case by Lutwak \cite{LE93}, that, if $\mu$ is $s$-concave (in the sense of Borell), $s>0$, then it is $(sp)$-concave with respect to $L^p$ summation for $p\geq 1$. As for equality conditions:  we recall that Dubuc \cite{SD77} showed, as elaborated on by Milman and Rotem \cite{MR14}, the following: let $\mu$ be $s$-concave (in the sense of Borell). If $K$ and $L$ are Borel sets such that, for some $\lambda\in (0,1)$ one has \begin{equation}
\label{eq:dubuc}
\mu((1-\lambda)K + \lambda L)^s = (1-\lambda)\mu(K)^s+\lambda \mu(L)^s,\end{equation}
then, $K=aL+b$ for some $a>0$ and $b\in \R^n$. In \eqref{eq:dubuc}, if one replaces $+$ with $+_p$, and $s$ with $ps$, then we lose the possibility of translation, i.e., equality holds in the $(sp)$-concavity of an $s$-concave measure with respect to $L^p$ summation if and only if the two bodies are dilates.

As far as we are aware, the method of obtaining $L^p$ $F$-concavity using monotonicity and the results by Roysdon-Xing and Lutwak for $s$-concave measures are the only examples of measures that are $L^p$ $F$-concave for $p>1$ (we save discussion of $p<1$ for the appendix). Thus, in our results below, the equality conditions for $p>1$ should be read as $K=tL$. Using the $L^p$ version of \eqref{eq:concave}, we recall the following from \cite[Proposition 6.16]{KL23}.

\begin{proposition}
\label{p:uni_0_1}
Fix $p \geq 1$. Let $\mu$, a Borel measure with continuous density on $\R^n\setminus\{0\}$, be $L^p$ $F$-concave over $\mathcal{C}\subset \kno$ such that $F$ is differentiable. Suppose $K,L\in\mathcal{C}$ are such that $S^{\mu}_{K,p}$ and $S^{\mu}_{L,p}$ are finite Borel measures on $\sn$ and $$S^{\mu}_{K,p}=S^{\mu}_{L,p}.$$ Then
$$\frac{F(\mu(L))-F(\mu(K))}{F^{\prime}(\mu(K))}\leq \frac{F(\mu(L))-F(\mu(K))}{F^{\prime}(\mu(L))}.$$
Furthermore, there is equality if and only if there is equality in the $L^p$ version of \eqref{eq:concave}.
\end{proposition}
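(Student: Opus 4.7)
The proof pivots on two ingredients: a first variation formula for $\mu(\cdot)$ along an $L^p$ Minkowski deformation, and the one-sided derivative of the $L^p$ $F$-concavity inequality. Set $K_t = (1-t)\cdot K +_p t\cdot L$ for $t\in[0,1]$, so that $h_{K_t}^p = (1-t)h_K^p + th_L^p$ and hence $\tfrac{d}{dt}\big|_{t=0^+}h_{K_t} = (h_L^p - h_K^p)/(p\,h_K^{p-1})$ at every $\theta\in\sn$. Combining this with the variational formula for $\mu$ (of Wulff-shape type, to be established in the weighted surface area section) and the identity $dS^\mu_{K,p} = h_K^{1-p}\,dS^\mu_K$ yields
$$\frac{d}{dt}\Big|_{t=0^+}\mu(K_t) = \frac{1}{p}\int_{\sn}(h_L^p - h_K^p)\,dS^\mu_{K,p}.$$
The assumed continuity of the density of $\mu$ on $\R^n\setminus\{0\}$ is what permits this interchange of differentiation and integration.

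Assume (without loss of generality) that $F$ is strictly increasing; the strictly decreasing case is symmetric after a sign flip. The $L^p$ $F$-concavity reads $F(\mu(K_t)) \geq (1-t)F(\mu(K)) + tF(\mu(L))$, with equality at $t=0$. Taking right derivatives at $t=0$, the chain rule together with the variational formula gives
$$\frac{1}{p}\int_{\sn}(h_L^p - h_K^p)\,dS^\mu_{K,p} \geq \frac{F(\mu(L)) - F(\mu(K))}{F'(\mu(K))},$$
where we divided by $F'(\mu(K))>0$. Swapping the roles of $K$ and $L$, multiplying by $-1$, and invoking $S^\mu_{K,p} = S^\mu_{L,p}$ to identify the two integrals produces
$$\frac{1}{p}\int_{\sn}(h_L^p - h_K^p)\,dS^\mu_{L,p} \leq \frac{F(\mu(L)) - F(\mu(K))}{F'(\mu(L))}.$$
Chaining the two bounds delivers the desired inequality.

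For the equality case, put $g(t) = F(\mu(K_t)) - [(1-t)F(\mu(K)) + tF(\mu(L))]$. Equality throughout the chain forces $g'(0^+) = 0$, while $L^p$ $F$-concavity applied between pairs $K_s,K_{s'}$ (legitimate since $\mathcal{C}$ ought to be closed under the $L^p$ Minkowski operation) shows $g$ is concave on $[0,1]$; combined with $g\geq 0$, $g(0)=g(1)=0$, and $g'(0^+)=0$, this forces $g\equiv 0$, i.e., equality in the $L^p$ version of \eqref{eq:concave}. The converse implication is immediate: if equality holds in that inequality for all $\lambda\in(0,1)$, then $F\circ\mu(K_t)$ is affine in $t$, both one-sided derivative inequalities collapse to equalities, and the chain collapses in turn.

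The main obstacle is rigorously justifying the variational formula of the first step under the weak smoothness assumptions on $\mu$ and the fact that $t\mapsto K_t$ need not be a smooth deformation. A Wulff-shape argument, leveraging the continuity of the density and dominated convergence to move the derivative under the integral, is the standard route and is precisely what is developed in the paper's weighted surface area section; once in place, the remainder of the argument is essentially algebraic bookkeeping.
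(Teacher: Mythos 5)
Your proof is correct and follows essentially the same route as the paper's cited source ([KL23, Proposition 6.16]): the engine is the $L^p$ Minkowski first inequality (Proposition~\ref{t:LpFcon} in the paper), which you re-derive from scratch by differentiating $t\mapsto F(\mu(K_t))$ along the $L^p$ segment $K_t = (1-t)\cdot K+_p t\cdot L$ at $t=0^+$, using Proposition~\ref{p:deriv} for the variational formula. Your one-sided derivative $\frac{d}{dt}\big|_{t=0^+}\mu(K_t) = \frac{1}{p}\int(h_L^p-h_K^p)\,dS^\mu_{K,p}$ is exactly $\mu_p(K;L)-\mu_p(K;K)$ from \eqref{eq:Lp_mixed_measures}, and the swap using $S^\mu_{K,p}=S^\mu_{L,p}$ is the identity $\mu_p(K;L)=\mu_p(L;L)$, $\mu_p(K;K)=\mu_p(L;K)$, which is how the paper chains the two applications of Proposition~\ref{t:LpFcon}. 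So the argument is not genuinely different; it is Proposition~\ref{t:LpFcon} with the hood open.

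Two points worth flagging. First, your concavity of $g(t) = F(\mu(K_t)) - [(1-t)F(\mu(K))+tF(\mu(L))]$ needs $\mathcal{C}$ to be closed under the $L^p$ Minkowski combination of $K$ and $L$ (so that the $F$-concavity inequality applies to pairs $K_s,K_{s'}$). You flag this as ``$\mathcal{C}$ ought to be closed,'' which is true in every application ($\mathcal{C}=\kno$ or $\kne$) but is not literally in the hypotheses; it should be stated as a standing assumption. Second, in the converse direction you wrote ``if equality holds in that inequality for all $\lambda\in(0,1)$,'' but the intended reading of equality in \eqref{eq:concave} is at a single $\lambda_0$; the upgrade from one $\lambda_0$ to all of $[0,1]$ is again the concavity argument (a concave function touching a chord at an interior point equals the chord), which you have already set up in the forward direction. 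Neither point is a genuine error, just a small tightening of the exposition.
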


As we have seen in our existence results, we were able to pin the measure of our convex bodies. Therefore, uniqueness follows as an immediate corollary of the above.
\begin{theorem}
\label{t:uni_0_1}
Fix $p \geq 1$. Let $\mu$, a Borel measure with continuous density on $\R^n\setminus\{0\}$, be $L^p$ $F$-concave over $\mathcal{C}\subset \kno$ such that $F$ is differentiable. Suppose $K,L\in\mathcal{C}$ are such that $\mu(K)=\mu(L)$, $S^{\mu}_{K,p}$ and $S^{\mu}_{L,p}$ are finite Borel measures on $\sn$ and $S^{\mu}_{K,p}=S^{\mu}_{L,p}.$ Then, there is equality in the $L^p$ version of \eqref{eq:concave}.
\end{theorem}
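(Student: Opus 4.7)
The plan is to obtain the theorem as an essentially immediate corollary of Proposition~\ref{p:uni_0_1}. The hypotheses of that proposition are already present in our setting: $\mu$ is a Borel measure with continuous density on $\R^n\setminus\{0\}$, $F$ is differentiable, $\mu$ is $L^p$ $F$-concave over $\mathcal{C}$, and $K,L\in\mathcal{C}$ satisfy $S^\mu_{K,p}=S^\mu_{L,p}$ (the finiteness of both measures follows from the stated finiteness of $S^\mu_{L,p}$ together with this equality). Applying Proposition~\ref{p:uni_0_1} therefore produces the chain
$$\frac{F(\mu(L))-F(\mu(K))}{F'(\mu(K))}\leq \frac{F(\mu(L))-F(\mu(K))}{F'(\mu(L))}.$$

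The key step is then to invoke the extra hypothesis not used in Proposition~\ref{p:uni_0_1}, namely $\mu(K)=\mu(L)$. Under this equality, both numerators vanish, so the displayed inequality is automatically a trivial equality $0\leq 0$. Equivalently, equality is attained throughout the bound supplied by Proposition~\ref{p:uni_0_1}. The final step is to appeal to the equality characterization in that proposition, which states that equality in the displayed bound holds if and only if equality holds in the $L^p$ version of \eqref{eq:concave}. This delivers the conclusion.

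There is no substantive obstacle, since the entire analytic content has been isolated in Proposition~\ref{p:uni_0_1}. The only care needed is to check the applicability hypotheses of that proposition, and to observe that the additional assumption $\mu(K)=\mu(L)$ trivializes the intermediate inequality, rather than being used in any delicate way. In the special case where the $L^p$ $F$-concavity is obtained by pushing forward $F$-concavity through monotonicity of $L^p$ means, the equality case further forces $K=L$; in the $s$-concave setting with $p>1$ handled by Roysdon--Xing, equality forces $K$ and $L$ to be dilates, as indicated in the remarks preceding the statement.
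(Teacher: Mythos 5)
Your proof is correct and matches the paper's approach exactly: the paper presents Theorem~\ref{t:uni_0_1} as an "immediate corollary" of Proposition~\ref{p:uni_0_1}, and your argument supplies precisely the intended details — the hypothesis $\mu(K)=\mu(L)$ forces the displayed inequality in Proposition~\ref{p:uni_0_1} to collapse to $0\leq 0$, at which point the equality characterization in that proposition yields the conclusion.
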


Let us return to the case of $s$-concave measures from \eqref{eq:dubuc}, and let $s>0$. If $\mu$ is $s$-concave, then it is also $L^p$ $s$-concave (due to monotonicity of $L^p$ means) and $L^p$ $(sp)$-concave (Roysdon-Xing). Suppose that we have equality in either of these $L^p$ concavities. Then, since we also know that $\mu(K)=\mu(L)$, $K=L$. Consequently, ``equality in \eqref{eq:concave}" in Theorem~\ref{t:uni_0_1}, in the case of $s$-concave measures for $p>1$, should be read as $K=L$. For $p=1$, one needs extra information to remove the translation from the equality conditions of \eqref{eq:dubuc} first, for instance, by assuming that both bodies are symmetric. We stated Proposition~\ref{p:uni_0_1} and Theorem~\ref{t:uni_0_1} in terms of a class $\mathcal{C}\subset \kno$ to allow us to consider, for example, $\mathcal{C}=\kne$. Thus, we can also apply Theorem~\ref{t:uni_0_1} to the $L^p$ $(1/n)$-concavity of $\mu\in\mathcal{M}_n^\infty$ over $\kne$ for $p\geq 1$. If $p>1$, then we must have $K=L$. However, for $p=1$, outside the case of rotationally invariant measures that are also log-concave (where \eqref{eq:dubuc} and $\mu(K)=\mu(L)$ tell us that $K=L$), the equality conditions of \eqref{eq:concave} in this case are apparently still open.

This paper is organized as follows. In Section~\ref{sec:surface_area}, we give many formulas for variations of surface area measure needed in this work (among other preliminary facts). In Section~\ref{sec:constants}, we prove Theorem~\ref{t:p>n} and the $p>0$ cases of Theorem \ref{t:first_sym} and Theorem~\ref{t:first_finite}, which are results concerning positive ranges for $p$. In Section~\ref{sec:p_zero}, we prove the $p=0$ cases of Theorems~\ref{t:first_sym} and \ref{t:first_finite}, which are the weighted log-Minkowski problems. In Section~\ref{sec:neg_p}, we prove the $p<0$ cases of Theorems~\ref{t:first_sym} and \ref{t:first_finite}, which concern negative $p$. For the constant-free results in the small mass regime, we prove Theorem~\ref{degree theory} in Section~\ref{sec:small_mass_small_p}, and Theorem~\ref{continuity method} in Section~\ref{sec:small_mass_large_p}. Finally, in Appendix \ref{sec:iso}, we list examples of isoperimetric functions, concluding on a conjecture that would yield a sharp isoperimetric inequality for measures in $\mathcal{M}_n^\infty$.

\section{Preliminaries}
\label{sec:surface_area}
In this work, we will be using the radial function of a convex body $K\in\kno$, which is the continuous function on $\R^n\setminus\{0\}$ given by $\rho_K(u)=\sup\{r>0:ru\in K\}$. Using radial functions, we have the volume formula
\begin{equation}
    \vol_n(K)=\frac{1}{n}\int_{\sn}\rho_K(u)^ndu,
    \label{eq:radial}
\end{equation}
where the integration is with respect to the spherical Lebesgue measure (i.e., $du=dS_{B_2^n}$).

We would now like to mention that in \cite{GHX19,GHXY20,LSYY22}, the first two of which pre-date the Gaussian Minkowski problem, Ye and his various collaborators studied the so-called generalized volume setting. The framework introduces the general volume function $G$ on $(0,\infty)\times \sn$. From this, a measure on $\partial K$ for $K\in\kno$ is introduced as a type of pushforward of $G(\rho_K(u),u)$. This framework is very broad; indeed, set \begin{equation}
\label{eq:general_volume}
G(t,u)=\int_{0}^t\phi(ru)r^{n-1}dr\end{equation} for $\phi$ density of a measure $\mu$ on $\rn$ to obtain $S^\mu_K$. They found necessary and sufficient conditions for existence in the associated Minkowski problem with a constant under various assumptions on $\partial_t G$. In particular, their results imply the $L^p$ Gaussian Minkowski problem with a constant for all $p>0$, in the strict sense of the definition of Minkowski problem. Crucially, however, the approach, when translated to $S^\mu_K$ via \eqref{eq:general_volume}, does not allow control of $\mu(K)$. The ability to do so, as first done by Huang, Xi and Zhao, is vital in Gaussian Minkowski problems, and is an important extra ingredient that we strove to keep. Consequently, the results for the Gaussian measure implied by \cite{GHX19,GHXY20,LSYY22} are very similar, but ultimately disjoint, from the majority of the mentioned results on Gaussian Minkowski problems with a constant, and, more pertinently, disjoint from the results we present herein. In addition to being able to control $\mu(K)$, the class of rotationally invariant measures we consider allows us to drop the assumptions on $\phi$ that one would obtain using the aforementioned generalized volume results.

Fix an arbitrary $K\in\kno$. We say a convex body is strictly convex if its boundary does not contain a line-segment. The subgradient of $h_K$ is precisely the set function given by
$$\partial h_K(u)=\{z:\R^n: h_K(y) \geq h_K(u) + \langle y, z-u \rangle \; \forall y\in K\}.$$
Denoting $\nabla$ the usual gradient operator, one has that $\partial h_K(u)$ is a singleton, namely $\nabla h_K(u)$, if and only if $h_K$ is differentiable at $u$.
In general, for $u\in\sn$,
\begin{equation}\partial h_K(u) = F(K,u) = \{y\in K: h_K(u)=\langle u,y \rangle \}
\label{eq:faces}
\end{equation}
where $F(K,u)$ is the face of $K$ with outer-unit normal $u$. For $x\in \R^n\setminus\{0\}$, one has
$$\partial h_K(x) = F\left(K,\frac{x}{|x|}\right).$$
The Gauss map and the support function are related: $n_K$ is invertible at $u\in\sn$ if and only if $h_K$ is differentiable at $u$, in which case $n^{-1}_K(u) = \nabla h_K(u)$ \cite[Corollary 1.7.3]{Sh1}. Hence, $K$ is strictly convex if and only if $h_K\in C^1$ \cite[Page 115]{Sh1}.

The surface area measure is closely related to the so-called Monge-Amp\`ere measure: given a convex function $h$ defined on an open, $d$-dimensional convex set $\Omega$ (equipped with the $d$-dimensional Hausdorff measure), its Monge-Amp\`ere measure is precisely $\mu_{h}(E)=\mathcal{H}^d\left(N_{h}(E)\right),$ where
$$N_{h}(E)=\bigcup_{x \in E} \partial h(x)$$
for a Borel subset $E\subset \Omega$. Here, $\partial h$ is the subgradient of $h$; since we will only use this when $h$ is the support function of a convex body, we do not define the subgradient of an arbitrary convex function. If $h$ is $C^2$, then one obtains the following integral representation:
\begin{equation}
    \label{eq:mae}
    \mu_{h}(E) = \int_{E}\det\hess{h(x)}d\mathcal{H}^{d}(x),
\end{equation}
where \text{Hess} denotes the Hessian on $\R^{d}$. As an example, setting $\Omega=\s^{n-1}$, the surface area measure is then the Monge-Amp\`ere measure of the support function: 
\begin{equation}
\label{eq:sur_mae}
S_K(E)=\mathcal{H}^{n-1}\left(\bigcup_{u \in E} F(K, v)\right)=\mathcal{H}^{n-1}\left(\bigcup_{u \in E} \partial h_K(u)\right)=\mu_{h_K}(E).\end{equation}

If the boundary of $K$ is a $C^2$ manifold with positive Gauss curvature everywhere, then we say that $K$ is $C^2_+$. If $K$ is $C^2_+$, then $h_K\in C^2(\sn)$ \cite[Bottom of Page 115]{Sh1} and $S_K$ is absolutely continuous with respect to the spherical Lebesgue measure: $dS_K(u) = \det(\nabla^2 h_{K}+h_{K}I) du $. Here, $I$ is the $(n-1)\times (n-1)$ identity matrix and $\nabla^2$ is the spherical Hessian. We denote by $\nabla_s$ the spherical Gradient. 
 
Since $K\in\kno$, $\rho_K(\theta)$ is continuous, and so there exists some $\theta_K\in\sn$ such that $\rho_K(\theta_K)$ is maximal. One has that the line segment $[-\rho_K(\theta_K)\theta_K,\rho_K(\theta_K)\theta_K]$ is completely contained in $K$, and yet $K$ is contained in the ball of radius $\rho_K(\theta_K).$
    Let $g(t)=|t|$ if $K$ is symmetric and $g(t)=t_+=\max\{t,0\}$ otherwise. From convexity, one has 
    \begin{equation}\label{support function inequality2}
		h_{K}(u)\geq \rho_{K}(\theta_K)g(\langle\theta_K, u\rangle)
	\end{equation}
    for all $u\in\sn$. Note that, at the point $\theta_K$, $\rho_K(\theta_K)=h(\theta_K)$. Recalling that $h_{K}$ is $1$-homogeneous, we have (by differentiating $h(tu)$ at $t=1$) that, for every $u\in \sn$ such that $h_K$ is differentiable at $u$,
 \begin{equation} 
 h_{K}(u)=\langle \nabla h_{K}(u),u \rangle.
 \label{supp_homo}
 \end{equation}
 Consequently, by the Cauchy-Schwarz inequality, $h_{K}(u)\leq |\nabla h_{K}(u)|$. These two estimates yield \begin{equation}
 \label{eq:supp_max_deriv_com}
 h_{K}(\theta_K)g
 (\langle u,\theta_K\rangle) \leq |\nabla h_{K}(u)|\end{equation} for $K\in\kno$.

It is also well-known that for every $u\in\sn$, there exists a $v$ such that $\rho_{K}(v)v=h_{K}(u)u+\nabla_s h_{K}(u)$. This implies that
\begin{equation}
\label{eq:radial_support}
\rho_{K}^2(v)=h^2_{K}(u)+|\nabla_s h_{K}(u)|^2.\end{equation}
Recall also the fact that, when $K$ is $C^2_+$,
 \begin{equation}
 \label{eq:monge}
 \operatorname{det}\left(\nabla^2 h_{K}(u)+h_{K}(u) I\right)=\frac{\left(h_{K}(u)^2+\left|\nabla_s h_{K}(u)\right|^2\right)^{\frac{n}{2}}}{h_{K}(u)}=\frac{\rho_K(v)^n}{h_K(u)},\end{equation}
 where $v$ and $u$ are related via \eqref{eq:radial_support}.
 Notice then that \eqref{eq:monge} yields
 \begin{equation}
 \label{eq:monge_2}
h_K(u)\operatorname{det}\left(\nabla^2 h_{K}(u)+h_{K}(u) I\right) \leq \max_{v\in\sn}\rho_K(u)^n = \max_{u\in\sn}h_K(u)^n.\end{equation}

For every positive $f\in C(\sn)$, the \textit{Wulff shape} of $f$ is the convex body given by
	\begin{equation}
	    [f]=\{x\in\R^n:\langle x,u\rangle\leq f(u) \; \forall u \; \in \sn\}.
	\end{equation}
	One has that, for $K\in\kno$, $[h_K]=K.$ Since $f$ is positive, $[f]\in\kno$. Furthermore, if $f$ is even, then $[f]$ is symmetric. Next, for $f\in C(\sn)$, Aleksandrov \cite{AL} defined a perturbation of $K\in\conbod$ to be the Wulff shape of the function
	\begin{equation}h_t(u)=h_K(u)+tf(u),
 \label{eq:perturb}
 \end{equation}
	where $t\in (-\delta,\delta)$, $\delta$ small enough so that $h_t$ is positive for all $u$.
 
 It was shown \cite{KL23,HXZ21} that, for almost all $u\in\s^{n-1}$ up to a set of spherical Lebesgue measure zero,
	\begin{equation}
 \label{eq:radial_variation}
 \diff{\rho_{[h_t]}(u)}{t}\bigg|_{t=0}=\lim_{t\to 0}\frac{\rho_{[h_t]}(u)-\rho_{K}(u)}{t}=\frac{f(n_{K}(r_{K}(u)))}{h_K(n_K(r_K(u)))}\rho_K(u).\end{equation} Here, $r_K(u)=\rho_K(u)u$ is the radial map, which is defined almost everywhere. In fact, \eqref{eq:radial_variation} was proven using tools from \cite{HLYZ16}. Let us elaborate:
 In \cite{HLYZ16}, they introduced the following concept: for $h,f\in C(\s^{n-1})$ and some small $\delta$, define a function $h_t:\s^{n-1}\to (0,\infty)$ via
	\begin{equation}
	    \log(h_t(u))=\log(h(u)) +tf(u) + o(t,u)
	    \label{eq:log_fam}
	\end{equation}
	where $o(t,u)/t \rightarrow 0$ as $t\to 0$ for all $u\in\s^{n-1}.$ Then, the Wulff shapes $[h_t]$ are said to be the logarithmic family of Wulff shapes formed by the pair $(h,f).$ One can readily verify that $h_t$ defined by \eqref{eq:perturb} is the logarithmic family of Wulff shapes formed by the pair $(h_K,\frac{f}{h_K})$; then, \eqref{eq:radial_variation} follows from \cite[Lemma 4.3]{HLYZ16}. However, there was actually no need to use this specific family. The same proof yields the following.
 \begin{proposition}
 \label{p:radial_new}
     Let $K\in\kno$ and $f\in C(\s^{n-1})$ be such that $(h_K,f)$ is a logarithmic family. Then, defining $h_t$ via \eqref{eq:log_fam}, we have
     \begin{equation}
 \label{eq:radial_variation_2}
 \diff{\rho_{[h_t]}(u)}{t}\bigg|_{t=0}=\lim_{t\to 0}\frac{\rho_{[h_t]}(u)-\rho_{K}(u)}{t}=f(n_{K}(r_{K}(u)))\rho_K(u).\end{equation}
 In fact, there exist constants $M,\delta>0$ so that, for almost every $u\in\sn$ and every $t\in (-\delta,\delta),$ $$|\rho_{[h_t]}(u)-\rho_{K}(u)| \leq M|t|.$$
 \end{proposition}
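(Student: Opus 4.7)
The plan is to mirror the proof of \cite[Lemma 4.3]{HLYZ16} almost verbatim, the only change being that one tracks the factor $f$ rather than $f/h_K$; the factor $1/h_K(n_K(r_K(u)))$ appearing in \eqref{eq:radial_variation} is an artifact of using the pair $(h_K, f/h_K)$ in the logarithmic family, so replacing the pair with $(h_K,f)$ removes it and leaves $f(n_K(r_K(u)))\rho_K(u)$.

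The first step is to rewrite the radial function of a Wulff shape via the pointwise formula
\[
\rho_{[h_t]}(u) = \inf_{\substack{v\in\sn\\ \langle v,u\rangle >0}} \frac{h_t(v)}{\langle v,u\rangle},
\]
valid for any $u\in\sn$ with $\rho_{[h_t]}(u) < \infty$. At $t=0$ the infimum is attained at $v^{*}=v^{*}(u)$, and for almost every $u\in\sn$, specifically on the complement of the radial image of the singular set of $\partial K$ (which has spherical measure zero since $K\in\kno$), the minimizer is unique and equal to $n_K(r_K(u))$, with $h_K(v^{*})/\langle v^{*},u\rangle=\rho_K(u)$. At these good points I would then sandwich $\rho_{[h_t]}(u)$: the upper bound comes from plugging $v=v^{*}$ into the infimum, and the lower bound comes from choosing a minimizer $v_t$ of $h_t(v)/\langle v,u\rangle$ and using that any subsequential limit of $v_t$ as $t\to 0$ must again be a minimizer at $t=0$, hence equal to $v^{*}$ by uniqueness.

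Inserting the logarithmic family expansion $h_t(v)=h_K(v)\exp(tf(v)+o(t,v))$ into the upper bound yields
\[
\rho_{[h_t]}(u)\leq \frac{h_K(v^{*})}{\langle v^{*},u\rangle}\exp\bigl(tf(v^{*})+o(t,v^{*})\bigr) = \rho_K(u)\exp\bigl(tf(v^{*})+o(t,v^{*})\bigr),
\]
while the minimizer argument gives a matching asymptotic lower bound using continuity of $f$ and convergence $v_t\to v^{*}$. Dividing by $t$, letting $t\to 0$, and using that $o(t,v)/t\to 0$ produces $f(v^{*})\rho_K(u)=f(n_K(r_K(u)))\rho_K(u)$, as desired.

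For the uniform Lipschitz estimate, I would exploit the same sandwich: since $f$ is continuous on $\sn$ and the error term $o(t,v)$ is controlled uniformly in $v$ for $|t|$ sufficiently small (as in the HLYZ16 setup of logarithmic families), one has $\exp(tf(v)+o(t,v))\leq 1+M|t|$ and $\geq 1-M|t|$ uniformly for $|t|<\delta$ and $v\in\sn$, for a suitable $M>0$. Plugging these two-sided bounds into the infimum formula gives $|\rho_{[h_t]}(u)-\rho_K(u)|\leq M\rho_K(u)|t|$, and absorbing $\max_{u\in\sn}\rho_K(u)$ into the constant yields the claim. The only step that requires real care is the uniqueness of the minimizer $v^{*}$ for $u$ in a set of full measure, for which I would invoke the standard fact that $r_K$ is an almost-everywhere bijection between $\sn$ and the regular boundary points of $K$; this is the same subtlety present in \cite[Lemma 4.3]{HLYZ16}, so no new ideas are required beyond substituting $f$ for $f/h_K$ throughout.
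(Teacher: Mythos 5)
Your proposal is correct and is essentially the paper's own proof: the paper explicitly justifies Proposition~\ref{p:radial_new} by observing that the argument of \cite[Lemma 4.3]{HLYZ16} applies verbatim to a general logarithmic family $(h_K,f)$, and your sandwich argument via the infimum formula for $\rho_{[h_t]}$, the a.e.\ uniqueness of the minimizer $v^*=n_K(r_K(u))$, and the uniform control of $o(t,\cdot)$ is exactly that argument. The only point to keep explicit is that the $o(t,\cdot)/t\to 0$ convergence must be uniform on $\sn$ (as in the HLYZ16 definition) for both the lower bound along $v_t$ and the uniform Lipschitz estimate, which you correctly note.
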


 The following result was shown by Langharst and Kryvonos \cite{KL23}, extending on the partial cases by Hosle-Kolesnikov-Livshyts \cite{GAL19,HKL21}, Gardner-Hug-Weil-Xing-Ye \cite{GHX19}, Kolesnikov-Milman \cite{KM18}, Bobkov \cite{BS07} and Aleksandrov \cite{AL}. 
 \begin{proposition}
 \label{p:var}
 Let $\mu$ be a Borel measure on $\R^n$ with locally integrable density $\phi$. Let $K$ be a convex body, such that $\partial K$, up to a set of $(n-1)$-dimensional Hausdorff measure zero, is in the Lebesgue set of $\phi$. Then, for $f\in C(\sn)$, one has that
	\begin{equation}
 \label{eq:var}
 \lim_{t\rightarrow 0}\frac{\mu([h_K+t f])-\mu(K)}{t}=\int_{\sn}f(u)dS^{\mu}_{K}(u).\end{equation}
 \end{proposition}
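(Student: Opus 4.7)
The plan is to express $\mu([h_K+tf])$ in polar coordinates, differentiate the inner radial integral, and then push forward from $\sn$ to $\partial K$ via the radial map. Writing $h_t=h_K+tf$ and $\rho_t:=\rho_{[h_t]}$, the polar representation gives
$$\mu([h_t])-\mu(K)=\int_{\sn}\left(\int_{\rho_K(u)}^{\rho_t(u)}\phi(ru)\,r^{n-1}\,dr\right)du,$$
so the whole problem is to differentiate the bracketed quantity and justify interchanging the limit with the outer integral.

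First I would invoke Proposition~\ref{p:radial_new}, applied to the logarithmic family formed by the pair $(h_K,f/h_K)$ (since $\log h_t=\log h_K+tf/h_K+o(t)$). This yields both the pointwise formula \eqref{eq:radial_variation},
$$\frac{d\rho_t(u)}{dt}\bigg|_{t=0}=\frac{f(n_K(r_K(u)))}{h_K(n_K(r_K(u)))}\rho_K(u)=:\alpha(u),$$
valid at a.e.\ $u\in\sn$, and the uniform Lipschitz bound $|\rho_t(u)-\rho_K(u)|\le M|t|$ for all $|t|<\delta$. This bound confines the inner $r$-integration to a fixed compact annulus around $\partial K$, on which $\phi$ is integrable.

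Next I would establish the pointwise limit, for a.e.\ $u\in\sn$,
$$\lim_{t\to 0}\frac{1}{t}\int_{\rho_K(u)}^{\rho_t(u)}\phi(ru)\,r^{n-1}\,dr=\phi(r_K(u))\,\rho_K(u)^{n-1}\,\alpha(u),$$
which follows from a one-dimensional Lebesgue differentiation argument at $r=\rho_K(u)$ along the ray through $u$; here the hypothesis that $\mathcal{H}^{n-1}$-a.e.\ point of $\partial K$ lies in the Lebesgue set of $\phi$ is transferred, via Fubini inside a tubular annular neighborhood of $\partial K$, to 1-D Lebesgue points along $\mathcal{H}^{n-1}$-a.e.\ ray. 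The Lipschitz bound together with local integrability of $\phi$ on the annulus provides a dominating function independent of $t$, so dominated convergence permits exchanging the limit with $\int_{\sn}(\cdot)\,du$.

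Finally I would apply the change of variables $u\mapsto r_K(u)=\rho_K(u)u$ from $\sn$ onto $\partial K$; its Jacobian, via the identity $\langle u,n_K(r_K(u))\rangle=h_K(n_K(r_K(u)))/\rho_K(u)$, is $\rho_K(u)^n/h_K(n_K(r_K(u)))$. Substituting turns the resulting expression into $\int_{\partial K}f(n_K(y))\phi(y)\,d\mathcal{H}^{n-1}(y)$, which by \eqref{eq:surface_mu} is precisely $\int_{\sn}f(v)\,dS^\mu_K(v)$. The main technical obstacle is the middle step: converting a property at $\mathcal{H}^{n-1}$-a.e.\ boundary point (Lebesgue point in $\R^n$) into a $1$-D differentiation statement along a.e.\ ray. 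This is what forces the Lebesgue-set hypothesis on $\phi$ and requires the Fubini/tubular-neighborhood reduction; the rest of the argument is a standard polar-coordinate computation and radial-map change of variables.
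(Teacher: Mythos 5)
Your proposal follows the same route the paper takes: the paper does not give a self-contained proof of Proposition~\ref{p:var}, but cites \cite{KL23} and explicitly states that the argument there "used the radial variations \eqref{eq:radial_variation} to prove \eqref{eq:var}" — which is exactly your polar-coordinate plus radial-variation scheme. Your change of variables and Jacobian $\rho_K(u)^n/h_K(n_K(r_K(u)))$ are correct, as is the invocation of Proposition~\ref{p:radial_new} applied to the logarithmic family formed by $(h_K,f/h_K)$.

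The weak point is precisely the step you yourself flag as the "main technical obstacle," and it is more serious than your sketch admits. Being an $n$-dimensional Lebesgue point of $\phi$ at $y\in\partial K$ does not imply that $r\mapsto\phi(ru)r^{n-1}$ has a one-dimensional Lebesgue point at $r=\rho_K(u)$ along the fixed radial direction $u=y/|y|$, and Fubini in the annulus does not repair this: Fubini only shows that for a.e.\ $u\in\sn$, a.e.\ $r$ is a 1-D Lebesgue point, which says nothing about the specific graph $\{r=\rho_K(u)\}$, a null set in the product. (For instance, a density of the form $\phi(x)=g(x\cdot e)$ with $g=0$ a.e.\ but $g(0)=1$ gives every point of $\R^n$ as a Lebesgue point with value $0$, yet the restriction to any line in the hyperplane $x\cdot e=0$ is identically $1$.) The correct way to close the argument is not to fix $u$ and differentiate, but to estimate the aggregated quantity $\frac{1}{|t|}\int_{[h_t]\triangle K}|\phi(x)-\phi(\pi_K(x))|\,dx$ directly — e.g.\ via a Besicovitch covering of the tubular neighborhood $N_{M|t|}(\partial K)$ by balls centered on $\partial K$, which lets the $n$-dimensional Lebesgue-set hypothesis act through full-dimensional averages and then dominated convergence in the boundary variable — or alternatively via approximation of $\phi$ by continuous densities together with a boundary-trace estimate. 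Your dominated-convergence step has a parallel issue: the natural dominating object is a 1-D maximal function evaluated on the same measure-zero graph, and its integrability over $\sn$ does not come for free from local integrability of $\phi$ on the annulus. So the architecture is sound and matches the paper's cited proof, but the proposal as written leaves the genuinely delicate convergence step unresolved.
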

 The assumption that $\mu$ has locally integrable density can be dropped, i.e., $\mu$ can have singular components, as long as there exists a $\delta$-neighborhood of $\partial K$ not intersecting the region of $\R^n$ assigned singular mass. When $f=h_L$, where $L$ is a compact, convex set, one has $[h_K+\epsilon h_L]=K+\epsilon L$ if $K\in\kno$. 

 Recently, \eqref{eq:var} was re-proven in \cite{FYZZ25} in the case when $K\in\kno$ and $\mu$ has continuous density $\omega$; we now elaborate. Recall that $S^\mu_{K,0}$ is the weighted cone measure, i.e., $dS^\mu_{K,0}=h_KdS^\mu_K$.
 Then, \eqref{eq:var} can be written as, when $K\in\kno$,
 \[\lim_{t\rightarrow 0}\frac{\mu([h_K+t f])-\mu(K)}{t}=\int_{\sn}\frac{f(u)}{h_K(u)}dS^{\mu}_{K,0}(u),\]
 which is what appears in \cite[Lemma 2.1]{FYZZ25} (note in that work $S^{\mu}_{K,0}$ is called the dual $\omega$-Orlicz moment of $K$). The choice to use $\frac{1}{h_K(u)}dS^{\mu}_{K,0}(u)$, and not simply $S^{\mu}_{K}$, forces one to have the origin in the interior of $K$. 
 
  By using $S^\mu_K$ in \eqref{eq:var}, one can merely shift both the measure and the convex body $K$ to remove any necessity for $K$ to contain the origin; that is, the case for a general convex body $K$ actually follows from the case containing the origin. Indeed, for a convex body $K$, let $K-\int_{K}xdx = K^\prime \in \kno.$ For a Borel measure $\mu$ with density $\phi$ whose Lebesgue set contains $\partial K$, let $\mu^\prime$ be the Borel measure with density $\phi^\prime(y) = \phi(y+\int_K x dx)$. Then, notice that $S^{\mu^\prime}_{K^\prime}=S^\mu_K$, $\mu(K)=\mu^\prime(K^\prime)$, and, for $t$ small enough, $[h_{K^\prime}+tf]=[h_K+tf]-\int_K x dx$ implies $\mu^\prime([h_{K^\prime}+tf])=\mu([h_K+tf]).$ 

 Schneider \cite{RS24} recently gave a new, geometric proof of Proposition~\ref{p:var} for when $\mu$ has continuous density. This elegant proof is in contrast to the approach taken in \cite{KL23}, which used the radial variations \eqref{eq:radial_variation} to prove \eqref{eq:var}. However, one advantage of using \eqref{eq:radial_variation} is that, if one uses Proposition~\ref{p:radial_new}, the exact same proof yields the following.
 \begin{theorem}
 \label{t:better_var}
     Let $\mu$ be a Borel measure on $\R^n$ with locally integrable density $\phi$. Let $K\in\kno$ be such that $\partial K$, up to a set of $(n-1)$-dimensional Hausdorff measure zero, is in the Lebesgue set of $\phi$.
     Suppose $f\in C(\sn)$ is such that $(h_K,f)$ is a logarithmic family, and define $h_t$ via \eqref{eq:log_fam}.
     
      Then,  one has
	\begin{equation}
 \label{eq:var_2}
 \lim_{t\rightarrow 0}\frac{\mu([h_t])-\mu(K)}{t}=\int_{\sn}f(u)dS^{\mu}_{K,0}(u).\end{equation}
 \end{theorem}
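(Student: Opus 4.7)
As flagged right before the statement, the plan is to run the proof of Proposition~\ref{p:var} from \cite{KL23} essentially verbatim, substituting Proposition~\ref{p:radial_new} wherever \eqref{eq:radial_variation} was used. First, I would pass to spherical polar coordinates against the density $\phi$ to write
$$\frac{\mu([h_t])-\mu(K)}{t}=\int_{\sn}\frac{\rho_{[h_t]}(u)-\rho_K(u)}{t}\cdot\frac{1}{\rho_{[h_t]}(u)-\rho_K(u)}\int_{\rho_K(u)}^{\rho_{[h_t]}(u)}\phi(ru)r^{n-1}\,dr\,du,$$
where the inner integral is interpreted as a signed integral to handle both cases $\rho_{[h_t]}(u)>\rho_K(u)$ and $\rho_{[h_t]}(u)<\rho_K(u)$.

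Next, for almost every $u\in\sn$, the boundary point $\rho_K(u)u\in\partial K$ is a Lebesgue point of $\phi$ by hypothesis, so the second factor converges to $\phi(\rho_K(u)u)\rho_K(u)^{n-1}$ as $t\to 0$. By Proposition~\ref{p:radial_new}, the first factor converges pointwise to $f(n_K(r_K(u)))\rho_K(u)$ and, more importantly, is dominated by a constant $M$ uniformly in $u$ and in small $t$. The same $M|t|$ bound confines the inner integration to a fixed tubular neighborhood of $\partial K$; since $\phi$ is locally integrable and this neighborhood is bounded, an integrable majorant for the outer integrand can be produced.

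Applying dominated convergence yields
$$\lim_{t\to 0}\frac{\mu([h_t])-\mu(K)}{t}=\int_{\sn}f(n_K(r_K(u)))\,\phi(\rho_K(u)u)\,\rho_K(u)^n\,du.$$
It then remains to rewrite the right-hand side as $\int_{\sn}f\,dS^\mu_{K,0}$ by the usual two changes of variables: from $\sn$ (radial parametrization) to $\partial K$ through the radial map $r_K$, whose Jacobian is $\rho_K(u)^{n-1}/\langle u,n_K(r_K(u))\rangle$, producing a factor of $\langle x,n_K(x)\rangle$; and then from $\partial K$ back to $\sn$ via the Gauss map, using $dS^\mu_K=(\phi\circ n_K^{-1})\,dS_K$ together with $dS^\mu_{K,0}=h_K\,dS^\mu_K$. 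These are exactly the same manipulations that produce \eqref{eq:var} from its radial-coordinate form.

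The main obstacle is the dominated convergence step: one must secure a uniform-in-$t$ integrable bound for the spherical integrand even though $\phi$ need not be bounded near $\partial K$. This is precisely where the uniform Lipschitz bound on $\rho_{[h_t]}-\rho_K$ supplied by Proposition~\ref{p:radial_new}, coupled with the Lebesgue-set hypothesis on $\phi$, is essential; once it is in place, the remaining changes of variables are routine and identical to the argument for Proposition~\ref{p:var}.
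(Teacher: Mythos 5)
Your proposal is correct and follows the same route the paper intends: the paper itself asserts the result by stating that the proof of Proposition~\ref{p:var} from \cite{KL23} carries over verbatim once Proposition~\ref{p:radial_new} is substituted for \eqref{eq:radial_variation}, and your write-up spells out exactly that substitution, including the polar-coordinate decomposition, the pointwise limits from the Lebesgue-set hypothesis and the radial-derivative formula, the $M|t|$ Lipschitz bound that underpins the convergence step, and the two changes of variables ($\sn\to\partial K$ via the radial map with Jacobian $\rho_K^{n-1}/\langle u,n_K\rangle$, then $\partial K\to\sn$ via the Gauss map together with $h_K(n_K(r_K(u)))=\rho_K(u)\langle u,n_K(r_K(u))\rangle$) that convert the limit into $\int_{\sn}f\,dS^\mu_{K,0}$.
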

 Notice that, by taking the Taylor series expansion of $e^x$, a perturbation of $h_K$ of the form $h_Ke^{tf}$ satisfies the hypothesis of Theorem~\ref{t:better_var}. Thus, we obtain the following corollary.
 \begin{corollary}
 \label{cor:deriv_2}
      Let $\mu$ be a Borel measure on $\R^n$ with locally integrable density $\phi$. Let $K\in\kno$ be such that $\partial K$, up to a set of $(n-1)$-dimensional Hausdorff measure zero, is in the Lebesgue set of $\phi$.
     Suppose $f\in C(\sn)$. Then,
      $$\lim_{t\rightarrow 0}\frac{\mu([h_Ke^{tf}])-\mu(K)}{t}=\int_{\sn}f(u)dS^{\mu}_{K,0}(u).$$
 \end{corollary}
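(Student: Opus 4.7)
The plan is to apply Theorem~\ref{t:better_var} directly with the choice $h_t(u) := h_K(u) e^{tf(u)}$. The work reduces to verifying that this one-parameter family satisfies the logarithmic family definition \eqref{eq:log_fam} and that $[h_t]$ is a valid convex body in $\kno$ for small $|t|$, after which the corollary follows with no additional computation.

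First, I would observe that since $f$ is continuous on the compact set $\sn$, it is bounded, say $\|f\|_\infty \leq M$. Consequently, for every $t$ with $|t|$ small, the function $h_t = h_K e^{tf}$ is continuous and strictly positive on $\sn$, so the Wulff shape $[h_t]$ is well-defined and lies in $\kno$. Using the Taylor expansion $e^{tf(u)} = 1 + tf(u) + O(t^2)$ uniformly in $u \in \sn$, we also get convergence $h_t \to h_K$ uniformly as $t \to 0$.

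Next, I would verify the logarithmic family hypothesis. Taking logarithms directly gives
\begin{equation*}
\log h_t(u) = \log h_K(u) + t f(u),
\end{equation*}
which is precisely \eqref{eq:log_fam} with the remainder $o(t,u) \equiv 0$; in particular $o(t,u)/t \to 0$ as $t \to 0$ is trivially satisfied for every $u\in\sn$. Hence the pair $(h_K, f)$ forms a logarithmic family in the sense of \cite{HLYZ16}, with associated Wulff shapes $[h_t] = [h_K e^{tf}]$.

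Finally, since the hypotheses on $\mu$, $\phi$, and $K$ in the corollary match exactly those of Theorem~\ref{t:better_var}, that theorem applies and yields
\begin{equation*}
\lim_{t\to 0}\frac{\mu([h_K e^{tf}]) - \mu(K)}{t} = \int_{\sn} f(u)\, dS^\mu_{K,0}(u),
\end{equation*}
which is the desired identity. There is no real obstacle here: the content of the corollary is essentially a repackaging of Theorem~\ref{t:better_var}, highlighting the fact that the exponential perturbation $h_K e^{tf}$ gives the cleanest realization of a logarithmic family (zero error term), making it a convenient perturbation to use when differentiating $\mu$ against the cone measure $S^\mu_{K,0}$.
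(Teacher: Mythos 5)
Your proof is correct and follows the same route as the paper: verify that $h_t = h_K e^{tf}$ realizes a logarithmic family for the pair $(h_K, f)$ and then invoke Theorem~\ref{t:better_var}. Your direct observation that $\log(h_K e^{tf}) = \log h_K + tf$ makes the error term $o(t,u)$ identically zero is, if anything, a slightly cleaner justification than the paper's appeal to the Taylor expansion of $e^x$, but the content is identical.
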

 \noindent This extends on the volume case from \cite{BLYZ12} and the Gaussian case from \cite{FHX23,JH24}. 
 
 The $L^p$ surface area was also given a weighted analogue in \cite{KL23}.  The associated variational formula is the same as above, except $[h_K+th_L]$ is replaced with $[(h_K^{p}+th_L)^{1/p}]$ and $dS^\mu_K$ is replaced with $\frac{1}{p}dS^\mu_{K,p}$; here, of course, $K$ must contain the origin. This extends on partial cases by Lutwak \cite{LE93}, Wu \cite{DW17}, and Liu \cite{JL22}.  Note that in \cite{KL23} it is assumed that $p\geq 1$, but this is not used in the proof. 
 \begin{proposition}
 \label{p:deriv}
      Let $p\neq 0$, let $K$ be a convex body in $\R^n$, and let $\mu$ be a Borel measure on $\R^n$ with a density whose Lebesgue set contains $\partial K$. Assume that $S^\mu_{K,p}$ is a finite Borel measure on $\sn$. Then,
      $$\lim_{\epsilon\rightarrow 0}\frac{\mu([(h_K^p+\epsilon f)^\frac{1}{p}])-\mu(K)}{\epsilon}=\frac{1}{p}\int_{\sn}f(u)dS^{\mu}_{K,p}(u).$$
 \end{proposition}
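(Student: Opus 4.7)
The plan is to recognize the $L^p$ perturbation $h_t := (h_K^p + \epsilon f)^{1/p}$ as a logarithmic family in the sense of \eqref{eq:log_fam}, and then invoke Theorem~\ref{t:better_var}. For the expression $h_K^p$ to be meaningful pointwise on $\sn$ when $p \neq 1$, we may assume $K \in \kno$, so $h_K$ is continuous and bounded below by some $c > 0$ on $\sn$. For $|\epsilon|$ small enough, $h_K^p + \epsilon f > 0$ everywhere on $\sn$, and a first-order Taylor expansion of $\log(1+x)$ yields
\[
\log h_t(u) = \log h_K(u) + \epsilon \cdot \frac{f(u)}{p\, h_K(u)^p} + o(\epsilon, u),
\]
with $o(\epsilon,u)/\epsilon \to 0$ pointwise (in fact uniformly in $u$ by compactness of $\sn$ and continuity of $f$ and $h_K$). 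Hence the pair $(h_K, \tilde f)$ with $\tilde f := f/(p\, h_K^p) \in C(\sn)$ forms a logarithmic family producing exactly the Wulff shapes $[h_t]$.

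Next, I would apply Theorem~\ref{t:better_var} to this logarithmic family. Since $K\in\kno$ has $\partial K$ in the Lebesgue set of the density by hypothesis, that theorem gives
\[
\lim_{\epsilon \to 0}\frac{\mu([h_t]) - \mu(K)}{\epsilon} = \int_{\sn} \tilde f(u)\, dS^\mu_{K,0}(u) = \int_{\sn}\frac{f(u)}{p\, h_K(u)^p}\, dS^\mu_{K,0}(u).
\]
Finally, using the pointwise identities $dS^\mu_{K,0} = h_K\, dS^\mu_K$ and $dS^\mu_{K,p} = h_K^{1-p}\, dS^\mu_K$ (both valid since $h_K>0$), the integrand simplifies as
\[
\frac{1}{p\, h_K(u)^p}\, dS^\mu_{K,0}(u) = \frac{1}{p} h_K(u)^{1-p}\, dS^\mu_K(u) = \frac{1}{p}\, dS^\mu_{K,p}(u),
\]
and the conclusion follows. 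The hypothesis that $S^\mu_{K,p}$ is a finite Borel measure guarantees the right-hand side is finite and the passage to the limit is legitimate.

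The essential content of the argument is housed in Theorem~\ref{t:better_var}; no additional variational or Lebesgue-set analysis is needed beyond what is already packaged there. The only point requiring genuine care is confirming that the Taylor remainder in the logarithmic expansion is of the form demanded by \eqref{eq:log_fam}, i.e. $o(\epsilon,u)/\epsilon \to 0$ for each $u$ as $\epsilon\to 0$. This is the \emph{main (mild) obstacle}: one must use that $h_K$ is bounded away from zero and $f$ is bounded on $\sn$, so that the remainder in $\log(1+x) = x - \tfrac{x^2}{2}+\cdots$ can be controlled by $\epsilon^2$ times a constant depending only on $K$ and $\|f\|_\infty$. Once this is in hand, everything else is bookkeeping, and the identity $dS^\mu_{K,p} = h_K^{-p} dS^\mu_{K,0}$ converts the output of Theorem~\ref{t:better_var} directly into the desired $L^p$ variational formula.
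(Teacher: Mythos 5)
Your proof is correct, and it takes a genuinely different (in fact, tidier) route than the one the paper leans on. The paper states Proposition~\ref{p:deriv} as imported from \cite{KL23}, where the argument runs through the $L^p$ analogue of the radial variation formula \eqref{eq:radial_variation} applied to the specific family $(h_K^p+\epsilon f)^{1/p}$. What you do instead is recognize that this $L^p$ family is \emph{already} a logarithmic family in the sense of \eqref{eq:log_fam} with perturbation $\tilde f = f/(ph_K^p)$, and then simply invoke Theorem~\ref{t:better_var} (which the paper itself establishes as the variational formula for arbitrary logarithmic families). The reduction of the three area measures to each other, $\frac{1}{ph_K^p}\,dS^\mu_{K,0}=\frac{1}{p}h_K^{1-p}\,dS^\mu_K=\frac{1}{p}\,dS^\mu_{K,p}$, then closes the loop. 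What this buys you is uniformity: rather than reproving a radial variation estimate for the $L^p$ perturbation, you see Proposition~\ref{p:deriv} as a direct corollary of the logarithmic-family machinery, in the same way Corollary~\ref{cor:deriv_2} (the $p=0$ case, $h_Ke^{tf}$) is. This also transparently explains the paper's remark that the restriction to $p\ge 1$ in \cite{KL23} is inessential: nothing about the Taylor expansion $\log h_t = \log h_K + \epsilon f/(ph_K^p) + o(\epsilon)$ uses the sign of $p$, only $p\neq 0$ and $h_K$ bounded away from $0$ and $\infty$ on $\sn$ (i.e.\ $K\in\kno$, which the paper concedes must be assumed anyway). The one point worth keeping crisp in a write-up is the one you flagged: the remainder in the expansion of $\log(1+\epsilon f/h_K^p)$ is $O(\epsilon^2)$ uniformly on $\sn$ precisely because $\inf_{\sn} h_K>0$ and $\|f\|_\infty<\infty$, so the $o(\epsilon,u)$ term in \eqref{eq:log_fam} is satisfied.
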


 Let $\mu$ be a Borel measure on $\R^n$. Then, Livshyts \cite{GAL19} introduced the \textit{mixed measure} of Borel sets $K$ and $L$ as 
\begin{equation}\mu(K;L):=\liminf_{\epsilon \to 0}\frac{\mu(K+\epsilon L)-\mu(K)}{\epsilon}.
\label{eq:mixed_measures}
\end{equation}
In the same work, it was shown that, if $\mu$ has continuous density and $K$ and $L$ are convex sets, then the liminf is a limit and
\begin{equation}\mu(K;L)=\int_{\sn}h_L(u)dS^\mu_K(u).
\label{eq:mixed_form}
\end{equation}
In \cite{KL23}, the assumption that $\mu$ has continuous density was weakened to assuming that $\mu$ with a density whose Lebesgue set contains $\partial K$. When $\mu=\gamma_n$, the quantity $\gamma_n(K;L)$ would reappear in \cite{HXZ21}. Notice that $\mu(K;B_2^n)=\mu^+(\partial K)$. Mixed measures were later systematically studied in \cite{FLMZ23,FLMZ26}.

We mention now that Firey's $L^p$ summation is precisely, for $a,b \geq 0$ and $K,L\in\kno,$ $$a\cdot K+_p b\cdot L=[(ah_K^{p}+bh_L^{p})^\frac{1}{p}].$$ The $L^p$ version of mixed measures, where summation was replaced by $L^p$ summation, was introduced in \cite{KL23}: under the same assumptions on $K,L$ and $\mu$ in \eqref{eq:mixed_form}, one has
\begin{equation}
\label{eq:Lp_mixed_measures}
\mu_p(K;L):=\lim_{\epsilon \to 0}\frac{\mu(K+_p\epsilon \cdot L)-\mu(K)}{\epsilon}=\frac{1}{p}\int_{\sn}h_L(u)^pdS^\mu_{K,p}(u),
\end{equation}
whenever $S^\mu_{K,p}$ is a finite Borel measure on the sphere. This extends on the Gaussian case from Liu \cite{JL22} and the volume case by Lutwak \cite{LE91}, Lutwak, Yang, and Zhang \cite{LYZ12}, and B\"{o}r\"{o}czky, Lutwak, Yang, and Zhang \cite{BLYZ13}. Notice that $\mu_p(K;K)=\frac{1}{p}\mu(K;K).$ While it is true that
\begin{equation}
\label{eq:volume}
\vol_n(K)=\frac{1}{n}\int_{\sn}h_K(u)dS_K(u),\end{equation}
this does not hold for measures: $n\mu(K)\neq \mu(K;K)$. The following proposition shown in \cite[Proposition 2.1]{FLMZ23}, relates them.
\begin{proposition}
\label{p:meas_relate}
    Let $\mu$ be a radially decreasing measure on $\R^n$. Then, for every convex body $K\in\kno$ such that the density of $\mu$ is defined on $\partial K$, one has
    $$n\mu(K) \geq \mu(K;K),$$
    with equality if and only if for almost every $y\in \partial K,$ the density of $\mu$ is a constant almost everywhere on $(0,y].$
\end{proposition}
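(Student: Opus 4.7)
\textbf{Proof plan for Proposition~\ref{p:meas_relate}.} The idea is to rewrite both $n\mu(K)$ and $\mu(K;K)$ as integrals over $\sn$ using the radial parameterization of $K$, and then to compare the resulting integrands pointwise by exploiting radial monotonicity.

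For the mixed measure, combine the integral representation \eqref{eq:mixed_form}, the definition \eqref{eq:surface_mu} of $S^\mu_K$, and the identity $h_K(n_K(y))=\langle y, n_K(y)\rangle$, valid $\hm$-a.e.\ on $\partial K$, to obtain
\[
\mu(K;K) \;=\; \int_{\partial K} \langle y, n_K(y)\rangle\, \phi(y)\, d\hm(y).
\]
Parameterize $\partial K$ by the radial map $y = \rho_K(\theta)\theta$, $\theta\in\sn$. Since the divergence theorem applied to the field $x\mapsto x$ together with the polar-coordinate formula \eqref{eq:radial} for volume yields the pushforward identity $\langle y, n_K(y)\rangle\, d\hm(y) = \rho_K(\theta)^n\, d\theta$, one gets
\[
\mu(K;K) \;=\; \int_{\sn} \phi\!\left(\rho_K(\theta)\theta\right)\rho_K(\theta)^n\, d\theta.
\]
For the left-hand side, polar coordinates give
\[
n\mu(K) \;=\; n\int_{\sn}\!\int_0^{\rho_K(\theta)}\! \phi(t\theta)\, t^{n-1}\, dt\, d\theta.
\]

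Because $\phi$ is radially decreasing, for every $\theta\in\sn$ and every $t\in(0,\rho_K(\theta)]$ the point $t\theta$ sits on the radial segment from $0$ to $\rho_K(\theta)\theta$, so $\phi(t\theta)\geq \phi(\rho_K(\theta)\theta)$. Multiplying by $nt^{n-1}$ and integrating in $t$ yields the pointwise-in-$\theta$ bound
\[
n\int_0^{\rho_K(\theta)}\! \phi(t\theta)\, t^{n-1}\, dt \;\geq\; \phi\!\left(\rho_K(\theta)\theta\right)\rho_K(\theta)^n,
\]
and integrating over $\sn$ gives $n\mu(K)\geq \mu(K;K)$.

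For the equality characterization, equality in the global inequality forces the pointwise inequality $\phi(t\theta)\geq \phi(\rho_K(\theta)\theta)$ to be an equality for Lebesgue-a.e.\ $t\in(0,\rho_K(\theta)]$ and spherical-Lebesgue-a.e.\ $\theta\in\sn$; transferring back by the radial map (which is a bi-Lipschitz homeomorphism from $\sn$ onto $\partial K$) gives precisely the stated condition: for $\hm$-a.e.\ $y\in\partial K$, the density $\phi$ is constant almost everywhere on $(0,y]$. The only real technical subtlety is justifying the Jacobian identity $\langle y,n_K(y)\rangle\,d\hm(y) = \rho_K(\theta)^n\,d\theta$ on the full boundary rather than just its $C^1$ part, but this is classical for convex bodies in $\kno$ since the radial map is Lipschitz and $\hm$ is concentrated on the $C^1$-smooth part of $\partial K$.
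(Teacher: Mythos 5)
The paper does not prove this proposition itself; it cites \cite[Proposition~2.1]{FLMZ23_1}, so there is no in-paper proof to compare against. Your argument is correct and self-contained, and it follows the natural route: you rewrite $\mu(K;K)=\int_{\partial K}\langle y,n_K(y)\rangle\phi(y)\,d\hm(y)$, push forward to $\sn$ via the radial map, express $n\mu(K)$ in polar coordinates, and compare integrands pointwise in $\theta$ using radial monotonicity. The one technical step you flag, the Jacobian identity $\langle y,n_K(y)\rangle\,d\hm(y)=\rho_K(\theta)^n\,d\theta$, is indeed standard for $K\in\kno$: it follows, for instance, by applying the divergence theorem to the vector field $x\mapsto x$ on cones $\{t\theta:\theta\in\omega,\ 0\le t\le\rho_K(\theta)\}$ over Borel $\omega\subset\sn$ (the lateral faces contribute nothing since $\langle y,\nu\rangle=0$ there), and the radial map $\theta\mapsto\rho_K(\theta)\theta$ is bi-Lipschitz, so the identity holds $\hm$-a.e. and null sets transfer correctly in both directions. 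The equality characterization is also handled properly: equality in the global inequality forces the $\theta$-integrand to vanish a.e.\ (nonnegativity), and then the inner one-dimensional integrand to vanish a.e.\ in $t$, which pulls back through the bi-Lipschitz radial map to the stated condition on $\hm$-a.e.\ $y\in\partial K$; the converse is immediate by retracing the same pointwise identities.
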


 We conclude this section by obtaining an isoperimetric inequality for $S^\mu_{p}(K)$ when $K\in\kno$ and $\mu$ is radially decreasing. To do this, we need another definition. \begin{definition}
\label{def: Lp_iso}
    Fix $p >0$, and let $\mu$ be a Borel measure on $\R^n$. We say $\mu$ has an $L^p$ isoperimetric function over $\kno$ (or $\kne$) if there exists a nonnegative function $I_{p}$ such that, for $K\in\kno$ (or $\kne$),
    $$S^\mu_p(K) \geq I_p(\mu(K)).$$
\end{definition}
We discuss in Appendix~\ref{sec:iso} existence of $I_p$. There is a plethora of examples, such as log-concave measures.  The result of the following lemma establishes the existence of $I_p$ when $p>1$ for radially decreasing measures $\mu$ that have an isoperimetric function $I_\mu$.

\begin{lemma}
\label{l:lp_iso}
    Let $K\in\kno$ and let $p\geq 1$. Let $\mu$ be a radially decreasing measure on $\R^n$ and $I$ a function satisfying \eqref{eq:Iso_eq} for $\mu$ and $K$. Then, 
    $$S^\mu_{p}(K) \geq (n\mu(K))^{1-p}I(\mu(K))^p.$$
\end{lemma}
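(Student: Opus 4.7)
The plan is to chain together three ingredients: Hölder's inequality applied to the defining integrals of $S^\mu_p(K)$ and $\mu^+(\partial K)$, the inequality $\mu(K;K)\le n\mu(K)$ for radially decreasing measures from Proposition~\ref{p:meas_relate}, and finally the hypothesized isoperimetric bound \eqref{eq:Iso_eq} applied to $K$ itself. The case $p=1$ is trivially the isoperimetric inequality, so one only needs to treat $p>1$.

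The key manipulation is to write the total weighted surface area as
\[
\mu^+(\partial K) \;=\; \int_{\sn} dS^\mu_K(u) \;=\; \int_{\sn} h_K(u)^{(p-1)/p}\cdot h_K(u)^{-(p-1)/p}\, dS^\mu_K(u),
\]
and then apply Hölder's inequality with conjugate exponents $p/(p-1)$ and $p$. Since $K\in\kno$, the support function $h_K$ is strictly positive and continuous, so the integrand makes sense and no integrability issue arises. This yields
\[
\mu^+(\partial K)^p \;\le\; \left(\int_{\sn} h_K(u)\, dS^\mu_K(u)\right)^{p-1}\left(\int_{\sn} h_K(u)^{1-p}\, dS^\mu_K(u)\right),
\]
and the second factor on the right is precisely $S^\mu_p(K)$ by definition \eqref{eq:l_pSura_weighted}. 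Rearranging gives
\[
S^\mu_p(K) \;\ge\; \frac{\mu^+(\partial K)^p}{\left(\int_{\sn} h_K\, dS^\mu_K\right)^{p-1}}.
\]

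To convert the denominator into $n\mu(K)$, recall the mixed measure formula \eqref{eq:mixed_form}, which identifies $\int_{\sn} h_K\, dS^\mu_K$ with $\mu(K;K)$. Since $\mu$ is radially decreasing, Proposition~\ref{p:meas_relate} gives $\mu(K;K)\le n\mu(K)$, so the denominator is bounded above by $(n\mu(K))^{p-1}$ (the exponent $p-1\ge 0$ preserves the direction of the inequality). For the numerator, the isoperimetric hypothesis \eqref{eq:Iso_eq} applied to $K$ gives $\mu^+(\partial K)\ge I(\mu(K))$, and raising to the $p$-th power is monotone since both sides are nonnegative. Combining these two estimates with the displayed inequality yields the claim.

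The only step that requires care is verifying that the mixed measure formula \eqref{eq:mixed_form} and Proposition~\ref{p:meas_relate} are applicable, i.e.\ that the density of $\mu$ is defined on $\partial K$; this is automatic under the standing hypothesis that $\mu$ is radially decreasing (hence has a density continuous on $\R^n\setminus\{0\}$) combined with $K\in\kno$ (so $0\notin\partial K$). Everything else is routine. I do not expect a genuine obstacle; the content of the lemma is really just \textit{Hölder $+$ mixed measure bound $+$ isoperimetry}, packaged cleanly.
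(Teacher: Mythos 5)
Your proof is correct and is essentially the paper's own argument: the paper applies Jensen's inequality (monotonicity of $L^p$ means) to the probability measure $h_K\,dS^\mu_K/\mu(K;K)$, which is exactly your Hölder step in disguise, and then invokes Proposition~\ref{p:meas_relate} and the isoperimetric hypothesis in the same order. No substantive difference.
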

\begin{proof}
    By applying Jensen's inequality to the probability measure $\frac{h_K(u)dS^\mu_K(u)}{\mu(K;K)}$ on the sphere, we obtain
    \begin{align*}
        \left(\frac{S^\mu_p(K)}{\mu(K;K)}\right)^{\frac{1}{p}}&=\left(\int_{\sn}h_K(u)^{-p}\frac{h_K(u)dS^\mu_K(u)}{\mu(K;K)}\right)^{\frac{1}{p}}
        \\
        &
        \geq \int_{\sn}h_K(u)^{-1}\frac{h_K(u)dS^\mu_K(u)}{\mu(K;K)}=\frac{\mu^+(\partial K)}{\mu(K;K)}.
    \end{align*}
    Upon re-arrangement, this becomes
    $$S^\mu_p(K)^\frac{1}{p}\geq \mu(K;K)^{\frac{1-p}{p}} \mu^+(\partial K).$$
    Applying Proposition~\ref{p:meas_relate}, we obtain
    $$S^\mu_p(K)^\frac{1}{p}\geq (n\mu(K))^{\frac{1-p}{p}}\mu^+(\partial K).$$
    Using that $\mu^+(\partial K)\geq I(\mu(K))$ by hypothesis and raising both sides to the $p$th power yields the result.
\end{proof}
We note that the case $\mu=\gamma_n$ (and $I=I_{\gamma_n}$) of Lemma~\ref{l:lp_iso} was previously done in \cite{FHX23}.

\section{The case of positive p}
\label{sec:constants}
In this section, we will prove the $p>0$ cases of Theorems~\ref{t:first_sym} and \ref{t:first_finite}. At the end of the section, we also establish another result for when $p>n$, see Theorem~\ref{t:p>n} below. We will utilize the variational technique; the use of this technique can be traced through \cite{HW24,FLX23,FHX23,KL23,HXZ21,GHXY20,HLYZ10,CW06,LYZ06,LYZ04,LE93}. We denote by $C(\sn)$ the set of continuous functions on the sphere. If there is a ``$+$" superscript, then the functions are additionally nonnegative; if there is a ``$e$" subscript, then the functions are also even.

We will work with the following two functionals: the first
\begin{equation}
    \psi_{\nu}(f):=\mu\left([f]\right)-\frac{1}{p}\int_{\sn}f^p(u)d\nu(u); \quad \psi_\nu(K):=\psi_\nu(h_K),
    \label{eq:functional}
\end{equation}
will be taken over $C^+_e(\sn)$, and the second
\begin{equation}
    \Omega_{\nu}(f):=-\frac{1}{p}\int_{\sn}f^p(u)d\nu(u); \quad \Omega_\nu(K):=\Omega_\nu(h_K),
    \label{eq:functional_2}
\end{equation}
can be taken over $C^+(\s^{n-1})$ or $C^+_e(\sn)$. 

We break the proof into four steps:
\begin{enumerate}
    \item First, show that, for $p\neq 0$, any maximizer of \eqref{eq:functional} or \eqref{eq:functional_2} in $C^+_e(\sn)$ is the support function of a symmetric convex body, and any maximizer of \eqref{eq:functional_2} in $C^+(\sn)$ is the support function of a convex body containing the origin in its interior.
    \item Second, show that, when $p\neq 0$, a maximizer $K$ of \eqref{eq:functional} satisfies $\nu=S^\mu_{K,p}$, and a maximizer $K$ of \eqref{eq:functional_2} satisfies $\nu=\frac{\nu(\sn)}{S^\mu_p(K)}S^\mu_{K,p}$.
    \item Third, establish the existence of a maximizer when the measure of the body is pinned and $p$ is positive, that is, to prove the $p>0$ cases of Theorems~\ref{t:first_sym} and \ref{t:first_finite}.
    \item Finally, establish the existence of a maximizer when $p> n$ and there are assumptions of symmetry and Property \textbf{(D)}$_p$, that is, to prove Theorem~\ref{t:p>n}.
\end{enumerate}

\begin{lemma}
\label{l:op_relate}
    Let $\mu$ be a Borel measure on $\R^n$, and fix a Borel measure $\nu$ on $\sn$. Let $K\in\kno$. Then, for a fixed $p\in\R,$ $p\neq 0$:
    \begin{enumerate}
        \item $K$ solves 
        \begin{equation} \sup\{\psi_{\nu}(K):K\in\kne\}
        \label{eq:max_p>n}
        \end{equation} if and only if $h_K$ solves $$\sup\{\psi_{\nu}(f):f\in C_e^+(\sn)\}.$$
        \item For a fixed $a\in (0,\mu(\R^n))$, $K$ solves 
        \begin{equation}
        \label{eq:max_p}
        \sup\{\Omega_{\nu}(K):\mu(K)=a,K\in\kno\}\end{equation} if and only if $h_K$ solves $$\sup\{\Omega_{\nu}(f):\mu([f])=a,f\in C^+(\sn)\}.$$
    \end{enumerate}
     Additionally, in \eqref{eq:max_p}, if $K$ is also assumed to be symmetric in the first optimization problem, then the set $C^+(\sn)$ is replaced by $C_e^+(\sn)$ in the second optimization problem.
\end{lemma}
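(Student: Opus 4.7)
My plan is to reduce both equivalences to a single structural fact about Wulff shapes: for any $f\in C^+(\sn)$, the body $[f]$ lies in $\kno$ (since $f$ is bounded above and below by positive constants on the compact sphere, $[f]$ is sandwiched between two origin-centered balls), one has the pointwise bound $h_{[f]}\le f$ on $\sn$ directly from the half-space definition of $[f]$, and $[h_{[f]}]=[f]$. When $f$ is additionally even, the symmetry $f(u)=f(-u)$ forces $[f]\in\kne$.

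The first real step would be a sign-insensitive monotonicity lemma: for $p\ne 0$ and continuous $f\ge g>0$ on $\sn$,
\[
-\tfrac{1}{p}\int_{\sn} f^p\,d\nu \;\le\; -\tfrac{1}{p}\int_{\sn} g^p\,d\nu.
\]
For $p>0$, $t\mapsto t^p$ is increasing and $-1/p<0$ flips the resulting inequality; for $p<0$, $t\mapsto t^p$ is decreasing on $(0,\infty)$ while $-1/p>0$, giving the same conclusion. Applying this with $g=h_{[f]}$ and combining with $\mu([f])=\mu([h_{[f]}])$ yields the key pair of inequalities
\[
\psi_\nu(h_{[f]})\;\ge\; \psi_\nu(f), \qquad \Omega_\nu(h_{[f]})\;\ge\; \Omega_\nu(f),
\]
which is the engine of both halves of the lemma.

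For part (1), the direction ``$h_K$ optimal over $C^+_e(\sn)$ $\Rightarrow$ $K$ optimal over $\kne$'' is automatic because $\{h_L:L\in\kne\}\subset C^+_e(\sn)$. For the converse, given any $f\in C^+_e(\sn)$, evenness forces $[f]\in\kne$, so the chain $\psi_\nu(f)\le \psi_\nu(h_{[f]})\le \psi_\nu(h_K)$ closes the argument. Part (2) runs identically, with the crucial extra observation that the constraint $\mu([f])=a$ is preserved under the replacement $f\mapsto h_{[f]}$ precisely because $[h_{[f]}]=[f]$, so the replacement stays feasible. The symmetric variant of (2) is obtained by replacing $\kno$ with $\kne$ and $C^+(\sn)$ with $C^+_e(\sn)$, and invoking the same evenness argument to keep $[f]\in\kne$.

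I do not anticipate a serious obstacle; the one point requiring care is sign-tracking in the monotonicity lemma when $p<0$, together with the sanity check that $[f]$ lies in the correct class ($\kno$ or $\kne$) so that it is an admissible competitor. Both reduce to the uniform bounds $0<m\le f\le M$ coming from continuity on the compact sphere, and to preservation of evenness by the Wulff shape operation.
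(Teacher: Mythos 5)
Your argument is correct and is essentially the same as the paper's: both hinge on the pointwise bound $h_{[f]}\le f$, the identity $[h_{[f]}]=[f]$ (so $\mu([f])=\mu([h_{[f]}])$), and the sign bookkeeping for $p>0$ versus $p<0$ that yields $-\tfrac1p h_{[f]}^p\ge -\tfrac1p f^p$ in both cases, whence $\psi_\nu(h_{[f]})\ge\psi_\nu(f)$ and $\Omega_\nu(h_{[f]})\ge\Omega_\nu(f)$. You simply spell out slightly more explicitly (the easy containment $\{h_L:L\in\kne\}\subset C_e^+(\sn)$ for the trivial direction, and the preservation of the constraint $\mu([f])=a$) steps the paper leaves implicit.
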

\begin{proof}

Recall for $f\in C^+(\sn)$, $h_{[f]} (u) \leq f(u)$ point-wise and $[h_{[f]}]=[f]$. Suppose $p>0$. Then, $h_{[f]}^p \leq f^p$ but $-\frac{1}{p} h_{[f]} ^p \geq -\frac{1}{p} f^p$. If $p<0$, then, we instead have $h_{[f]}^p \geq f^p$, but still obtain $-\frac{1}{p} h_{[f]} ^p \geq -\frac{1}{p} f^p$. Therefore, for all $p\neq 0$,
\begin{align*}
    \Omega_{\nu}([f])&= -\frac{1}{p}\int_{\sn}h^p_{[f]}(u)d\nu(u)
    \\
    &\geq -\frac{1}{p}\int_{\sn}f^p(u)d\nu(u)=\Omega_{\nu}(f).
\end{align*}
Thus, a maximizer of $\Omega_\nu$ is the support function of a symmetric convex body. Since one always has $\mu([f])=\mu([h_{[f]}])$, the above also shows $\psi_\nu([f]) \geq \psi_\nu(f) $. This establishes both equivalences.
\end{proof}

\begin{lemma}
\label{l:op_solve}
    Let $K\in\kno$, and let $\mu$ be a Borel measure on $\R^n$ with a density whose Lebesgue set contains $\partial K$. Then, for a fixed $p \in \R, p\neq 0$:
    \begin{enumerate}
        \item If $K$ solves \eqref{eq:max_p>n}, then $\nu=S^{\mu}_{K,p}$.
        \item If $K$ solves \eqref{eq:max_p}, then $\nu=\frac{\nu(\sn)}{S^\mu_p(K)}S^{\mu}_{K,p}.$
    \end{enumerate}
\end{lemma}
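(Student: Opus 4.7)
The plan is to apply the $L^p$ variational formula in Proposition~\ref{p:deriv}. For $f \in C(\sn)$ and $|t|$ small enough that $h_K^p + tf$ remains positive on $\sn$, set $h_t = (h_K^p + tf)^{1/p}$; then $[h_t] \in \kno$, and $[h_t] \in \kne$ when $K \in \kne$ and $f$ is even. Proposition~\ref{p:deriv} yields
$$\frac{d}{dt}\bigg|_{t=0}\mu([h_t]) = \frac{1}{p}\int_{\sn} f\, dS^\mu_{K,p},\qquad \frac{d}{dt}\bigg|_{t=0}\left(-\frac{1}{p}\int_{\sn}h_t^p\, d\nu\right) = -\frac{1}{p}\int_{\sn} f\, d\nu.$$

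For part (1), since $K$ is an unconstrained maximizer of $\psi_\nu$ over $\kne$, for every even $f$ the map $t \mapsto \psi_\nu([h_t])$ has a critical point at $t = 0$. Setting its derivative to zero and combining the two displays gives $\int_{\sn} f\, dS^\mu_{K,p} = \int_{\sn} f\, d\nu$ for all even $f$; since both $S^\mu_{K,p}$ (as $K \in \kne$ and $\mu$ is rotational invariant) and $\nu$ (the data of the symmetric problem) are even measures, we conclude $\nu = S^\mu_{K,p}$.

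For part (2), we must respect the constraint $\mu(K) = a$, so I adjust the perturbation by a constant. Fix $f \in C(\sn)$ (or $C_e(\sn)$ in the symmetric case) and consider $g_{t,s} = (h_K^p + tf + s)^{1/p}$. Applying Proposition~\ref{p:deriv} with the constant function $1$ gives $\partial_s \mu([g_{t,s}])|_{(0,0)} = S^\mu_p(K)/p$, which is strictly positive whenever the density of $\mu$ is not identically zero on $\partial K$. The implicit function theorem then produces $s = s(t)$ with $s(0) = 0$ and $\mu([g_{t,s(t)}]) = a$ for all small $t$, and differentiating the constraint gives
$$s'(0) = -\frac{\int_{\sn} f\, dS^\mu_{K,p}}{S^\mu_p(K)}.$$
Since $[g_{t,s(t)}]$ is admissible and $K$ maximizes $\Omega_\nu$ under the constraint, the derivative of $\Omega_\nu(g_{t,s(t)})$ at $t = 0$ must vanish, which combined with the chain rule yields
$$0 = -\frac{1}{p}\int_{\sn}f\, d\nu + \frac{\nu(\sn)}{p\,S^\mu_p(K)}\int_{\sn} f\, dS^\mu_{K,p}.$$
As $f$ is arbitrary in the relevant class, we obtain $\nu = \frac{\nu(\sn)}{S^\mu_p(K)}S^\mu_{K,p}$.

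The main technical point is the implicit function theorem step in part (2): one must verify enough joint regularity of $(t,s) \mapsto \mu([g_{t,s}])$. This follows from applying Proposition~\ref{p:deriv} in each variable, together with the uniform control on the radial function expansion supplied by Proposition~\ref{p:radial_new}, so that the Taylor remainder is uniform in the other parameter. This is a routine verification rather than a genuine obstacle. If one prefers, it can be bypassed entirely by a direct one-parameter argument, replacing $g_{t,s(t)}$ with the dilated Wulff shape $\beta(t)[h_t]$ where $\beta(t)$ is chosen by the intermediate value theorem to satisfy the constraint, and then computing $\beta'(0)$ from the variational formula.
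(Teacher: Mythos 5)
Your proof is correct and follows essentially the same route as the paper: the unconstrained variation of $\psi_\nu$ via Proposition~\ref{p:deriv} for part (1), and the two-parameter perturbation $(h_K^p+tf+s)^{1/p}$ with the implicit function theorem supplying the Lagrange multiplier for part (2), including the same formula for $s'(0)$ and the same final identity. Your explicit observation that testing only against even $f$ identifies the two measures only because both $\nu$ and $S^\mu_{K,p}$ are even in the relevant application is a point the paper leaves implicit.
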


\begin{proof}
First suppose that $K$ solves \eqref{eq:max_p>n}. Then, we perturb $K$ by $f\in C^+_e(\sn)$: let $$h_t=(h_K^p+tf)^\frac1p,$$ where $t\in (-\delta,\delta)$, $\delta$ chosen so that $h_t$ is strictly positive on $\sn$. Then, by Proposition~\ref{p:deriv}, we obtain
\begin{align*}
    0 = \frac{d\psi_\nu(h_t)}{dt}\Big|_{t=0} =\frac{1}{p}\int_{\sn}f(u)dS^{\mu}_{K,p}(u) - \frac{1}{p}\int_{\sn}f(u)d\nu(u).
\end{align*}
Since this is true for every $f\in C^+_e(\sn)$, we have by the Riesz representation theorem that $\nu=S^\mu_{K,p}$, thus establishing the first claim.

Next, suppose that $K$ solves \eqref{eq:max_p}. Since perturbing $K$ causes us to break the condition that $\mu(K)=a$, we must modify our direct variational approach with Lagrange multipliers. We define
$$ \Phi(t,\epsilon)=\Omega_{\nu}((h_{K}^p+tf+\epsilon)^{\frac{1}{p}})
$$
and
$$
\Psi(t,\epsilon)=\mu ([(h_{K}^p+tf+\epsilon)^{\frac{1}{p}}]),
$$
where $f\in C^+_e(\sn)$ if $\nu$ is even and one wants to take $K$ symmetric, or $f\in C^+(\sn)$ otherwise. We again set $h_t=(h_K^p+tf)^\frac1p.$ It can be noticed that
$$
\Psi(0,0)=\mu(K)=a,
$$
and, from Proposition~\ref{p:deriv},
$$
\Psi_\epsilon(0,0)=\frac{1}{p}S^\mu_p(K)\neq 0.
$$
Here, $\Psi_\epsilon$ denotes partial differentiation of $\Psi$ in the variable $\epsilon$.
Thus, when $t_0,\epsilon_0>0$ are sufficiently small, we can apply the implicit function theorem on $R=(-t_0,t_0)\times (-\epsilon_0,\epsilon_0)$, to obtain that there exists a function $\xi(t)$ such that $(t,\xi(t))$ is the unique solution of $\Psi(t,\epsilon)=a$ on $R$.

From $\Psi(t,\xi(t))=a$, we get $\Psi_t(0,0)+\Psi_\xi(0,0)\xi'(0)=0$, i.e., $\xi'(0)=\frac{-p\Psi_t(0,0)}{S^\mu_p(K)}$. Recalling by hypothesis $$\Omega_{\nu}(h_K)=\sup\{\Omega_{\nu}(f):\mu([f])=a,f\in C^+(\sn)\},$$
(or $f\in C^+_e(\sn)$ if $\nu$ is even and we wish to take $K$ symmetric), we find that the function $t\mapsto \Phi(t,\xi(t))$ is maximized at $0$. Consequently, we have
\begin{align*}
			0&=\frac{d}{dt}\Phi(t,\xi(t))\big|_{t=0}=\Phi_t(0,0)+\Phi_\xi(0,0)\xi'(0)\\
			&=\Phi_t(0,0)+\frac{\nu(\sn)\Psi_t(0,0)}{S^\mu_p(K)}=\frac{d}{dt}\Omega_{\nu}(h_t)\big|_{t=0}+\frac{\nu(\sn)}{S^\mu_p(K)}\frac{d}{dt}\mu([h_t])\big|_{t=0}\\
            &=-\frac{1}{p}\int_{\sn}f(u)d\nu(u)+\frac{\nu(\sn)}{pS^\mu_p(K)} \int_{\sn}f(u) dS^{\mu}_{K,p}.\\
		\end{align*}
The last-step follows from Proposition~\ref{p:deriv}. Then, by the arbitrariness of $f$, the second claim follows from the Riesz Representation theorem.
\end{proof}

There are a few steps that will be common to all proofs. Therefore, we list these as preparatory propositions. The first is fairly standard; we follow \cite[Lemma 6.3]{HLYZ18}.
\begin{proposition}
    \label{p:not_con} Let $\nu$ be a finite Borel measure on $\sn$ not concentrated on any great hemisphere, let $g(t)=|t|$ or $g(t)=t_+$ and let $p>0$. Then, there exists a constant $C_{\nu}(p)>0$ such that
    $$\int_{\sn}g(\langle\theta,u\rangle)^pd\nu(u) \geq C_{\nu}(p)^p\nu(\sn).$$
    Furthermore, for $p\geq 1$, $C_{\nu}(p)=C_{\nu}$, where $C_\nu\in (0,1]$ is a constant independent of $p$.
\end{proposition}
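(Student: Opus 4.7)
I would proceed as follows.

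Define $F_p : \sn \to [0,\infty)$ by $F_p(\theta) = \int_{\sn} g(\langle\theta,u\rangle)^p \, d\nu(u).$ Since $u \mapsto g(\langle\theta,u\rangle)^p$ is continuous and uniformly bounded by $1$, dominated convergence shows $F_p$ is continuous on the compact set $\sn$. So $F_p$ attains its minimum at some $\theta_0 \in \sn$, and the proposition reduces to showing $F_p(\theta_0) > 0$.

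Suppose, for contradiction, that $F_p(\theta_0)=0$. Then $g(\langle\theta_0,u\rangle)=0$ for $\nu$-a.e.\ $u\in\sn$. If $g(t)=t_+$, this forces $\langle\theta_0,u\rangle\le 0$ for $\nu$-a.e.\ $u$, so $\nu$ is concentrated on the great hemisphere $\{u:\langle-\theta_0,u\rangle\ge 0\}$, contradicting the hypothesis. If $g(t)=|t|$, this forces $\nu$ to be concentrated on the great subsphere $\theta_0^\perp\cap\sn$, which lies in every great hemisphere containing it, again a contradiction. Hence $F_p(\theta_0)>0$, and we may define $C_\nu(p)>0$ by $C_\nu(p)^p := F_p(\theta_0)/\nu(\sn)$.

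For the uniform constant when $p\ge 1$, I would leverage Jensen's inequality against the probability measure $d\nu/\nu(\sn)$. Applying the convex function $t\mapsto t^p$ (for $p\ge 1$) yields, for every $\theta\in\sn$,
\begin{equation*}
\frac{1}{\nu(\sn)}\int_{\sn} g(\langle\theta,u\rangle)^p \, d\nu(u) \;\ge\; \left(\frac{1}{\nu(\sn)}\int_{\sn} g(\langle\theta,u\rangle)\, d\nu(u)\right)^{p}.
\end{equation*}
Now set $C_\nu := \inf_{\theta\in\sn} \frac{1}{\nu(\sn)}\int_{\sn} g(\langle\theta,u\rangle)\, d\nu(u)$. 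By the argument of the previous paragraph applied to $p=1$, this infimum is attained and is strictly positive; since $g(\langle\theta,u\rangle)\le 1$ everywhere, we also have $C_\nu\le 1$. Combining with the Jensen bound gives
\begin{equation*}
\int_{\sn} g(\langle\theta,u\rangle)^p \, d\nu(u) \;\ge\; \nu(\sn)\, C_\nu^{\,p}
\end{equation*}
uniformly for all $p\ge 1$, completing the proof.

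The only subtlety is checking that failure of the inequality $F_p(\theta_0)>0$ really does force $\nu$ onto a great hemisphere; this is where the two cases $g(t)=t_+$ and $g(t)=|t|$ diverge slightly, but neither is hard. Everything else is compactness plus a single application of Jensen.
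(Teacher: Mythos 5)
Your proof is correct and follows essentially the same approach as the paper's: continuity plus compactness of $\sn$ to get a strictly positive minimum for each $p>0$, and Jensen's inequality applied to the normalized measure $d\nu/\nu(\sn)$ to get the $p$-independent constant for $p\geq 1$. The only difference is cosmetic — you spell out the contradiction argument establishing strict positivity of the minimum more explicitly than the paper does.
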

\begin{proof}
    Observe that the map, for $\theta\in\sn$,
$$\theta\to\int_{\sn}\langle\theta,u\rangle_+d\nu(u)$$
is strictly positive, since $\nu$ is not concentrated on any hemisphere. Since $\nu$ is finite, we can find a constant $C_\nu$ such that
$$\frac{1}{\nu(\sn)}\int_{\sn}|\langle\theta,u\rangle|d\nu(u) \geq \frac{1}{\nu(\sn)}\int_{\sn}\langle\theta,u\rangle_+d\nu(u) \geq C_\nu >0.$$
Furthermore, one has that $C_\nu\leq1$ since $|\langle \theta, u\rangle|\leq 1$. 
Notice via Jensen's inequality that, when $p\geq 1$,
$$\frac{1}{\nu(\sn)}\int_{\sn}|\langle\theta,u\rangle|^pd\nu(u) \geq \frac{1}{\nu(\sn)}\int_{\sn}\langle\theta,u\rangle_+^pd\nu(u) \geq C_\nu^p,$$
and the claim follows. When $p\in (0,1)$, we do not have such nice control over $C_\nu(p)$. We simply use the fact that, since $\nu$ is not concentrated on any great hemisphere, the function $$\theta \mapsto \int_{\sn}\langle \theta,u \rangle_+^pd\nu(u)$$
is continuous and therefore obtains a strictly positive minimum on $\sn$.
\end{proof}

\begin{proposition}
\label{p:converse}
    Let $K\in\kno$. Let $\mu$ be a Borel measure on $\R^n$ with density $\phi$ such that the Lebesgue set of $\phi$ contains $\partial K$ and $\phi > C> 0$ on $\partial K$. Let $p\in\R$ be so that $S^\mu_{K,p}$ is a finite Borel measure. Then, $S^\mu_{K,p}$ is not concentrated on any great hemisphere.
\end{proposition}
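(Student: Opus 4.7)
The plan is to unwind the definition of $S^\mu_{K,p}$ to reduce the desired statement to the classical fact that the ordinary surface area measure $S_K$ of a convex body is not concentrated on any great hemisphere. Recall the task: fix an arbitrary $\theta \in \sn$ and show that
\[
\int_{\sn} \langle \theta, u\rangle_+ \, dS^\mu_{K,p}(u) > 0.
\]

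First, I would apply the definition from \eqref{eq:l_pSura_weighted} to rewrite the integral as
\[
\int_{\sn} \langle \theta, u\rangle_+ \, dS^\mu_{K,p}(u)
= \int_{\partial K} \langle \theta, n_K(y)\rangle_+ \,\langle y, n_K(y)\rangle^{1-p} \phi(y)\, d\mathcal{H}^{n-1}(y),
\]
which is valid up to the set of $\mathcal{H}^{n-1}$-measure zero where $n_K$ fails to be well-defined. The right-hand side is the object I would then bound from below.

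The second step is to obtain uniform positive bounds on the non-boundary factors. Because $K \in \kno$, the origin lies in the interior of $K$, so there exist $0 < r \leq R < \infty$ with $r \leq h_K(u) \leq R$ for every $u \in \sn$. Since $h_K(n_K(y)) = \langle y, n_K(y)\rangle$ for $\mathcal{H}^{n-1}$-a.e.\ $y \in \partial K$, this gives the uniform bound
\[
\langle y, n_K(y)\rangle^{1-p} \geq c_p := \min\{r^{1-p}, R^{1-p}\} > 0,
\]
regardless of the sign of $1-p$. Combined with the hypothesis $\phi \geq C > 0$ on $\partial K$, we obtain
\[
\int_{\sn} \langle \theta, u\rangle_+ \, dS^\mu_{K,p}(u)
\geq C\, c_p \int_{\partial K} \langle \theta, n_K(y)\rangle_+\, d\mathcal{H}^{n-1}(y)
= C\, c_p \int_{\sn} \langle \theta, u\rangle_+ \, dS_K(u),
\]
where the last identity is the pushforward description of $S_K$ through $n_K$.

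The third and final step is to invoke that $S_K$ itself is not concentrated on any great hemisphere; equivalently, $\int_{\sn} \langle \theta, u\rangle_+ \, dS_K(u) > 0$ for every $\theta \in \sn$. This is the classical condition \eqref{con_1} for the convex body $K$, and follows from Minkowski's existence theorem (or directly from the fact that if the integral vanished, then $S_K$ would be supported in the closed hemisphere $\{u : \langle \theta, u\rangle \leq 0\}$, forcing $K$ to be unbounded in the direction $\theta$ and contradicting $K \in \conbod$). Chaining the inequalities yields strict positivity of $\int_{\sn}\langle \theta, u\rangle_+\, dS^\mu_{K,p}(u)$ for all $\theta \in \sn$, and thus $S^\mu_{K,p}$ is not concentrated on any great hemisphere. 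I do not anticipate any genuine obstacle; the only subtle point is being careful with the sign of $1-p$ when bounding $h_K^{1-p}$, which is handled by taking the minimum of the two extremes.
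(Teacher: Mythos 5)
Your proof is correct and follows essentially the same route as the paper's: unwind the definition of $S^\mu_{K,p}$, bound $h_K^{1-p}$ and $\phi$ uniformly from below on $\partial K$ (your $c_p=\min\{r^{1-p},R^{1-p}\}$ is exactly the paper's case-split constant $c_1^{1-p}$), and reduce to the classical fact that $S_K$ is not concentrated on any great hemisphere.
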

\begin{proof}
Let 
\[
c_1 = \begin{cases}
    \min_{u\in\sn}h_K(u) & p < 1,
    \\
    1 & p =1,
    \\
    \max_{u\in\sn}h_K(u) & p >1.
\end{cases}
\]
Notice that $c_1 >0$ by the assumptions on $K$. For all $p\in\R$, we have $h_K^{1-p}\geq c_1^{1-p}$ on $\sn$.
Observe that, for every $\theta\in\sn$
\begin{align*}
    \int_{\sn}\langle \theta,u \rangle_+ dS^{\mu}_{K,p}(u) &= \int_{\partial K}h_K^{1-p}(n_K(u))\langle \theta,n_K(y) \rangle_+\phi(y)d\mathcal{H}^{n-1}(y)
    \\
    &\geq 
    c_1^{1-p}C\int_{\partial K}\langle \theta,n_K(y) \rangle_+d\mathcal{H}^{n-1}(y)
    \\
    &= c_1^{1-p}C\int_{\sn}\langle \theta, u \rangle_+dS_K(u) >0
\end{align*}
    since $S_K$ is not concentrated on any great hemisphere.
\end{proof}

The next two propositions will be used to show non-degeneracy of the limit of the sequence of convex bodies, i.e., that the limiting body has a non-empty interior. The first proposition will be used in the proof of the $p\geq 0$ cases of Theorem~\ref{t:first_sym}. In those proofs, we are able to prove upper-and-lower bounds on sequences solving the optimization problems independently of each other, and so we may assume we have already shown boundedness when establishing the non-degeneracy.

\begin{proposition}
\label{prop:non_degen}
Let $\psi\in C^+(\R_+\setminus\{0\})\cap L^1_{\operatorname{loc}}(\R_+\setminus\{0\})$ be such that $\int_{0}^{R}\psi(t)t^{n-1}dt<\infty$ for every $R>0$. Then, define the rotationally invariant Borel measure $\mu$ on $\R^n$ with density $\psi(|\cdot|)$ and fix $a \in (0,\mu(\R^n))$. Suppose $\{K_\ell\}$ is a bounded sequence of symmetric convex bodies such that, $\mu(K_\ell)=a$ for all $\ell$ and one has $K_\ell$ converges to a symmetric, compact convex set $K$ in the Hausdorff metric as $\ell\to\infty$. Then, $K\in\kne.$
\end{proposition}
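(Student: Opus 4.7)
The plan is a standard non-degeneracy argument: I will show that if the limit body $K$ had empty interior, then $\mu(K_\ell)\to 0$, contradicting $\mu(K_\ell)=a>0$. Because $\mu$ is rotational invariant and the $K_\ell$ are symmetric, everything reduces to controlling the $\mu$-measure of a thin symmetric slab.

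First, I would argue for contradiction that $K$ has empty interior. Since $K$ is symmetric, compact and convex, it must then lie in a hyperplane $H$ through the origin. By the rotational invariance of $\mu$ (which only depends on the radial profile $\psi$), we may take $H=\{x_n=0\}$ without loss of generality. By the bounded-ness hypothesis there is a fixed $R>0$ with $K_\ell\subset RB_2^n$ for all $\ell$, and the Hausdorff convergence $K_\ell\to K$ gives a sequence $\varepsilon_\ell\to 0$ with $K_\ell\subset K+\varepsilon_\ell B_2^n$. In particular,
\[
K_\ell\;\subset\;\bigl(H+\varepsilon_\ell B_2^n\bigr)\cap RB_2^n\;=:\;S_{\varepsilon_\ell}.
\]

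Next, I would estimate $\mu(K_\ell)\leq \mu(S_{\varepsilon_\ell})$ and apply dominated convergence. Define
\[
f(x)\;:=\;\psi(|x|)\,\mathbf{1}_{RB_2^n}(x),
\]
which belongs to $L^1(\R^n)$ by the hypothesis $\psi(|\cdot|)\in L^1_{\mathrm{loc}}(\R^n)$ (note that $RB_2^n$ is a bounded Borel set). Then
\[
\mu(S_{\varepsilon_\ell})\;=\;\int_{\R^n} f(x)\,\mathbf{1}_{\{|x_n|\leq\varepsilon_\ell\}}(x)\,dx.
\]
As $\varepsilon_\ell\to 0$, the integrand converges pointwise to $f(x)\mathbf{1}_{\{x_n=0\}}(x)$, which vanishes Lebesgue-a.e., while being dominated by $f\in L^1(\R^n)$. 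Thus $\mu(S_{\varepsilon_\ell})\to 0$, giving $\mu(K_\ell)\to 0$, contradicting $\mu(K_\ell)=a>0$. Hence $K$ has non-empty interior, i.e.\ $K\in\kne$.

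The only mildly delicate point is that $\psi$ may blow up at $0$, so it is important to use the hypothesis $\psi(|\cdot|)\in L^1_{\mathrm{loc}}(\R^n)$ (as opposed to just $\psi\in L^1_{\mathrm{loc}}((0,\infty))$) to guarantee $f\in L^1(\R^n)$; the rest is routine dominated convergence. No symmetry of the bodies beyond compactness/convexity is actually required to apply dominated convergence, but it is the symmetry that forces the degenerate limit to lie in a hyperplane through the origin, which is what lets the slab argument work.
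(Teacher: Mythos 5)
Your proof is correct, and it is in fact simpler and more direct than the one in the paper. The paper also argues by contradiction, but it goes through the directions $\theta_\ell$ minimizing $h_{K_\ell}$, rotates each $K_\ell$ so the degenerate direction is $e_n$, bounds $K_\ell$ inside a box $[-R,R]^{n-1}\times[-h_{K_\ell}(\theta_\ell),h_{K_\ell}(\theta_\ell)]$, and then performs a Fubini decomposition: it applies Lebesgue's differentiation theorem to the inner one-dimensional average and proves (via a Jensen-type moment estimate) that $\int_{[-R,R]^{n-1}}\psi(|\bar x|)\,d\bar x<\infty$ before invoking dominated convergence on the outer integral. You bypass that entire machinery by dominating the indicator of the thin slab $(H+\varepsilon_\ell B_2^n)\cap RB_2^n$ by the fixed function $f=\psi(|\cdot|)\mathbf{1}_{RB_2^n}\in L^1(\R^n)$ and applying dominated convergence once; the pointwise limit is supported on the Lebesgue-null set $\{x_n=0\}$, so $\mu(K_\ell)\to 0$, contradicting $\mu(K_\ell)=a>0$. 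Your route avoids the moment computation entirely and uses exactly the stated hypothesis $\psi(|\cdot|)\in L^1_{\mathrm{loc}}(\R^n)$, nothing more. One small remark on your closing observation: the symmetry of the $K_\ell$ is in fact not needed even to place the degenerate limit in a nice slab, since any compact convex set with empty interior lies in \emph{some} affine hyperplane, whose $\varepsilon$-thickening intersected with $RB_2^n$ still has $\mu$-measure tending to zero by the same dominated convergence argument; symmetry (and rotational invariance) merely let you normalize the hyperplane to $\{x_n=0\}$ through the origin, which is a convenience rather than a necessity for your proof.
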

\begin{proof}
    We must show that the limiting set $K$ is non-degenerate. By way of contradiction, suppose it is, i.e., that it has an empty interior. Then, we can find a sequence of directions $\theta_\ell$ such that $\theta_\ell\in \operatorname{argmin}_{v\in\sn}h_{K_\ell}(v)$ and, as $\ell\to\infty$, $h_{K_\ell}(\theta_\ell) \to 0$ and $\theta_\ell\to\theta$ for some $\theta\in\sn$. Since each $K_{\ell}$ is symmetric, it is contained in the symmetric slab supporting $K_\ell$ with outer-unit normals $\pm \theta_\ell$, i.e., if we set
    $$W(v)=\{x\in\R^n:|\langle v,x\rangle|\leq 1\},$$
    then $K_\ell \subset W\left(\frac{\theta_\ell}{h_{K_\ell}(\theta_\ell)}\right)=h_{K_\ell}(\theta_\ell)W(\theta_\ell)$. Without loss of generality, we may rotate $K$ and each $K_\ell$ using the rotational invariance of $\mu$ and assume that $\theta_\ell=e_n$ for all $\ell$. Then we have $K_\ell \subset h_{K_\ell}(e_n)W(e_n)$.
    
    But also, since $K$ is bounded and $K_\ell\to K$ by hypothesis, there exists $R=R(K)>0$ such that $K,K_\ell \subset [-R,R]^n$ for $\ell$ large enough; also, $h_K(e_n)<R$ for $\ell$ large enough. Consequently, $K_\ell \subseteq [-R,R]^{n-1}\times[- h_{K_\ell}(e_n), h_{K_\ell}(e_n)].$ 
    
    Next, decompose $\R^n$ into $\R^{n-1}\times e_n\R$, and write $x=(\bar x,x_n)$, with $\bar x \in \R^{n-1}$. Then
    \begin{equation}
    \label{eq:first_a_calc}
    \begin{split}
        a=\mu(K_\ell)&\leq \mu\left([-R,R]^{n-1}\times[- h_{K_\ell}(e_n), h_{K_\ell}(e_n)]\right)
        \\
        &= 2\int_{[-R,R]^{n-1}}\int_{0}^{h_{K_\ell}(e_n)}\psi(|(\bar x,x_n)|)dx_nd\bar x 
        \\
        &=2h_{K_\ell}(e_n) \int_{[-R,R]^{n-1}}\left(\frac{1}{h_{K_\ell}(e_n)}\int_{0}^{h_{K_\ell}(e_n)}\psi(|(\bar x,x_n)|)dx_n\right)d\bar x.
    \end{split}
    \end{equation}
    Abusing notation and keeping $|\,\cdot\,|$ for the Euclidean norm on $\R^{n-1}$, one then deduces that, for every $R>0,$ $\int_{RB_2^{n-1}}\psi(|\bar x|)d\bar x <\infty$. Notice that, by Lebesgue's differentiation theorem,
    $$\lim_{\ell\to\infty} \frac{1}{h_{K_\ell}(e_n)}\int_{0}^{h_{K_\ell}(e_n)}\psi(|(\bar x,x_n)|)dx_n = \psi(|\bar x|).$$
    Thus, by dominated convergence, we have 
    \begin{equation}
    \begin{split}
    \label{eq:limiting_cubes}
    \lim_{\ell\to \infty} \int_{[-R,R]^{n-1}}&\left(\frac{1}{h_{K_\ell}(e_n)}\int_{0}^{h_{K_\ell}(e_n)}\psi(|(\bar x,x_n)|)dx_n\right)d\bar x 
    \\
    &= \int_{[-R,R]^{n-1}}\psi(|\bar x|)d\bar x 
    < \int_{\sqrt{n-1}RB_2^{n-1}}\psi(|x|)d\bar x 
    \\
    &= \left((n-1)\vol_{n-1}(B_2^{n-1})\int_0^{\sqrt{n-1}R} \psi(t)dt\right)\cdot\left(\int_{0}^{\sqrt{n-1} R}t^{n-2}\frac{\psi(t)dt}{\int_{0}^{\sqrt{n-1}R}\psi(t)dt}\right),\end{split}
    \end{equation}
    where the last equality follows from polar coordinates.
    Define the $(d-1)$th moment of $\psi$ on $(0,r)$:
    $$I_d(r):=\int_0^r t^{d-1}\frac{\psi(t)dt}{\int_0^r \psi(t)dt}.$$ 
    We now show that $I_{n-1}(R)$ is finite for every $R<\infty$. We already have from polar coordinates that $\int_{RB_2^n}\psi(|x|)dx < \infty$ is totally equivalent to $I_n(R) < \infty$ for every $R<\infty$. We now show the rudimentary fact that if a function $\psi$ has finite $(n-1)$th moment on $(0,R)$, then it has finite $(n-2)$nd moment on $(0,R)$ ($n\geq 2$). Indeed, we obtain from Jensen's inequality
    \begin{equation}
    \label{eq:I_finite}
    I_{n-1}(R) = \int_0^R \left(t^{n-1}\right)^\frac{n-2}{n-1}\frac{\psi(t)dt}{\int_0^R \psi(t) dt} 
    \leq \left(\frac{\int_0^R \psi(t)t^{n-1}dt}{\int_0^R \psi(t)dt}\right)^\frac{n-2}{n-1} = I_n(R)^\frac{n-2}{n-1}< \infty.\end{equation}

    We now obtain our contradiction. By \eqref{eq:first_a_calc}, we have
    \[
    0<\frac{a}{2}\left(\int_{[-R,R]^{n-1}}\left(\frac{1}{h_{K_\ell}(e_n)}\int_{0}^{h_{K_\ell}(e_n)}\psi(|(\bar x,x_n)|)dx_n\right)d\bar x\right)^{-1}\leq h_{K_\ell}(e_n).
    \]
    Sending $\ell\to\infty$, we have, by \eqref{eq:limiting_cubes},
    \[
    0<\frac{a}{2}\left(\left((n-1)\vol_{n-1}(B_2^{n-1})\int_0^{\sqrt{n-1}R} \psi(t)dt\right)I_{n-1}(\sqrt{n-1}R) \right)^{-1}\leq 0,
    \]
    which is a contradiction. We conclude the proof.
\end{proof}

The next proposition will be used for non-degeneracy in the proofs of the $p\geq 0$ cases of Theorem~\ref{t:first_finite}.
\begin{proposition}
    Let $\mu$ be a rotationally invariant probability measure with locally integrable density continuous on $\R^n\setminus\{0\}$. Fix $a\in [\frac{1}{2},1)$. If $K$ is a compact convex set such that $o \in K$ and $\mu(K)=a$, then $K\in\kno$.
    \label{p:origin_int}
\end{proposition}
\begin{proof}
    By way of contradiction, suppose that the origin is on the boundary of $K$. Then, there exists $v\in\sn$ such that $h_K(v)=0$. Let $H^+=\{x:\langle x,v \rangle \geq 0\}$ and define $H^-$ similarly, but with $\geq$ replaced by $\leq$. Without loss of generality, $K\subset H^+$. Since $\mu$ is rotationally invariant, $\mu(H^+)=\frac{1}{2}$. But, this means $\frac{1}{2} \leq a = \mu(K)<\mu(H^+)=\frac{1}{2}$, a contradiction. Thus, $K$ contains the origin in its interior.
\end{proof}

We are ready to prove our main results for rotationally invariant measures when $p>0$.

\begin{proof}[Proof of the $p>0$ case of Theorems~\ref{t:first_sym} and \ref{t:first_finite}]
    We will work with the functional $\Omega_\nu$. In light of Lemmas~\ref{l:op_relate} and \ref{l:op_solve}, it suffices to show that a solution exists to 
    \begin{enumerate}
        \item \eqref{eq:max_p} for the $p>0$ case of Theorem~\ref{t:first_finite} and
        \item the variant of \eqref{eq:max_p} consisting of $C_e^+(\sn)$ under the assumption $\nu$ is even for the $p>0$ case of Theorem~\ref{t:first_sym} and the extra claim of the $p>0$ case of Theorem~\ref{t:first_finite}, concerning the existence of $K\in\kne$.
    \end{enumerate}
    We note the converse direction in each case follows from Proposition~\ref{p:converse}. As for the forward directions, let $K_\ell$ be a maximising sequence, that is
    $$\lim_{\ell\to\infty}\Omega_\nu(K_\ell) = \sup\{\Omega_{\nu}(K):\mu(K)=a,K\in\kno\}.$$
    We claim that this sequence is bounded. Indeed, suppose not. If $\nu$ is even and we are aiming to show the existence of a symmetric maximizer, replace $\kno$ with $\kne$ in the maximisation problem and let $g(t)=|t|$. Otherwise, let $g(t)=t_+$.
    
    For each $K_\ell$, there exists a $\theta_\ell\in\sn$ such that $\rho_{K_\ell}(\theta_{\ell})$ is maximal. Furthermore, from \eqref{support function inequality2}, one has $h_{K_\ell}(u)\geq \rho_{K_\ell}(\theta_\ell)g(\langle \theta_\ell, u\rangle)$. Notice then that, from the above estimate and Proposition~\ref{p:not_con},
    \begin{align*}
        \Omega_\nu(K_\ell) \leq  -\frac{1}{p} \rho_{K_\ell}(\theta_\ell)^{p}\int_{\sn} g(\langle \theta_\ell, u\rangle)^p d\nu(u) \leq -\frac{C_\nu(p)^p\nu(\sn)}{p} \rho_{K_\ell}(\theta_\ell)^{p}.
    \end{align*}
    Sending $\ell \to \infty$, the above becomes $\lim_{\ell\to\infty}\Omega_\nu(K_\ell) = -\infty$, contradicting $K_\ell$ being a maximising sequence. Therefore, the convex sets $K_\ell$ are uniformly bounded, and so, by Blaschke selection, then, by passing to a subsequence if need be, converge to a compact, convex set $K$ containing the origin (which is symmetric if $\nu$ is even). For the $p>0$ case of Theorem~\ref{t:first_finite}, the claim follows from Proposition~\ref{p:origin_int}. Similarly, for the $p>0$ case of Theorem~\ref{t:first_sym}, the non-degeneracy follows from Proposition~\ref{prop:non_degen}.      
\end{proof} 

We are able to remove the constant in the even weighted $L^p$ Minkowski problems when $p>n$, which generalizes the result by Wang \cite{HW24} for the Gaussian measure, by using the Property~\textbf{(D)}$_p$. 

\begin{theorem}
\label{t:p>n}
    Fix $p>n$. Let $\mu$ be an even Borel measure on $\R^n$ with locally integrable density $\phi$ satisfying Property~\textbf{(D)}$_p$ such that $\lim_{|x|\to 0}\phi(x)\in (0,\infty]$. Suppose $\nu$ is an even, finite Borel measure on the sphere. If $\nu$ is not concentrated on any great hemisphere, then there exists an origin symmetric convex body $K$ such that
    $$\nu=S^\mu_{K,p}.$$
    If $\phi$ is positive on $\partial K$, then the converse direction holds as well.
\end{theorem}
\begin{proof} We will work with the functional $\psi_\nu$. In light of Lemmas~\ref{l:op_relate} and \ref{l:op_solve}, it suffices to show that a solution to \eqref{eq:max_p>n} exists under the assumption $p>n$. Let 
$$\Psi(r,\nu)=\psi_{\nu}(h_{rB_2^n})=\mu(rB_2^n)-\frac{r^p}{p}\nu(\sn).$$
 Let $\phi$ be the density of $\mu$, and write
\begin{align*}
\mu(rB_2^n)=r^n\vol_n(B_2^n)\frac{1}{\vol_n(rB_2^n)}\int_{rB_2^n}\phi(x)dx.
\end{align*}
Consequently,
$$\Psi(r,\nu) = r^n\vol_n(B_2^n)\left(\frac{1}{\vol_n(rB_2^n)}\int_{rB_2^n}\phi(x)dx -r^{p-n}\frac{n}{p}\frac{\nu(\sn)}{\vol_{n-1}(\sn)} \right).$$
Therefore, as $r\to 0^+$, $r^{p-n}\frac{n}{p}\frac{\nu(\sn)}{\vol_{n-1}(\sn)} \to 0$, but, from the Lebesgue differentiation theorem,
$$\frac{1}{\vol_n(rB_2^n)}\int_{rB_2^n}\phi(x)dx \to \lim_{|x|\to 0}\phi(x) >0$$
as $r\to0$. This shows, $\Psi(r) >0$ for $r$ small enough.  
On the other hand, write
$$\Psi(r,\nu) = r^p\left(\frac{\mu(rB_2^n)}{r^p}-\frac{\nu(\sn)}{p}\right). $$
Next, notice Property~\textbf{(D)}$_p$ implies that, for every $x\in\R^n\setminus\{0\}$, 
\begin{equation}\lim_{r\to\infty}r^{n-p}\phi(rx)=0.
\label{eq:prop_d}
\end{equation}
Observe that 
\begin{align*}
\lim_{r\to\infty} \frac{\mu(rB_2^n)}{r^p} = \lim_{r\to\infty} \frac{1}{r^p}\int_{rB_2^n}\phi(x)dx= \lim_{r\to\infty} \int_{B_2^n} r^{n-p}\phi(rx)dx.
\end{align*}
For $r$ large enough, $r^{n-p}\phi(rx)$ is dominated by, say, $1$ for almost all $x \in B_2^n$. Hence, by dominated convergence, we obtain
\begin{align*}
\lim_{r\to\infty} \frac{\mu(rB_2^n)}{r^p}=0.
\end{align*}
Thus, $\Psi(r,\nu)<0$ for $r$ large enough. We therefore let $R(\nu)=\sup\{r>0:\Psi(r,\nu)>0\}$; we have just shown that $R(\nu)\in (0,\infty)$. We emphasize here that we view $R$ as a function from the set of Borel measures on the sphere to $(0,\infty)$.

Arguing like in the proofs of the $p>0$ case of Theorems~\ref{t:first_sym} and \ref{t:first_finite}, fix an arbitrary symmetric convex body $K$; then, from \eqref{support function inequality2}, $h_K(u)\geq \rho_K(\theta_K)| \langle \theta_K, u\rangle|$, where $\theta_K\in\sn$ is so that $\rho_K(\theta_K)$ is maximal, and $K$ is contained in the ball of radius $\rho_K(\theta_K).$ 

We can now directly compute using the above estimate and Proposition~\ref{p:not_con}:
\begin{align*}
\psi_{\nu}(h_K)&\leq \mu(K)-\frac{\rho_K(\theta_K)^p}{p}\int_{\sn}|\langle \theta_K, u\rangle|^pd\nu(u)
\leq \mu(K)-\frac{(\rho_K(\theta_K)C_\nu)^p}{p}\nu(\sn)
\\
&\leq \mu(\rho_K(\theta_K)B_2^n)-\frac{(\rho_K(\theta_K)C_\nu)^p}{p}\nu(\sn)
=\Psi(\rho_K(\theta_K),C_{\nu}^p\nu).
\end{align*}
Therefore, $\psi_\nu(h_K)>0$ implies $\Psi(\rho_K(\theta_K),C_{\nu}^p\nu) >0$, which means $$\rho_K(\theta_K) \leq R(C_{\nu}^p\nu)\leq \max\{R(C_{\nu}^p\nu),R(\nu)\}=:R,$$
and so $K\subseteq RB_2^n$. Formally, we can restrict our search for the maximizer to the set
$$\mathcal{F}=\{K\in\kne: K\subset RB_2^n\}.$$

We have that $\sup\{\psi_{\nu}(h_K):K\in\mathcal{F}\}>0$ by the construction of $R$. Let $\{K_\ell\}\subset \kne$ be a sequence of convex bodies so that $\lim_{\ell\to\infty}\psi_{\nu}(h_{K_\ell})=\sup\{\psi_{\nu}(h_K):K\in\mathcal{F}\}.$ Via Blaschke selection, there exists some $K\in\mathcal{F}$ and a sub-sequence $\{K_{\ell_j}\}\subset\{K_\ell\}$ such that $\lim_{j\to\infty}\psi_{\nu}(h_{K_{\ell_j}})=\psi_{\nu}(h_{K})$, and so $\lim_{j\to\infty}K_{\ell_j}=K$ with respect to the Hausdorff metric. Finally, we see that, from the definition of $\mathcal{F}$,
\begin{align*}
    \mu(K)=\lim_{j\to\infty}\mu(K_{\ell_j})\geq \lim_{j\to\infty}\psi_{\nu}(h_{K_{\ell_j}}) > 0,
\end{align*}
and so $K$ has non-empty interior, and is thus a convex body in the proper sense. The final claim follows from Proposition~\ref{p:converse}.
\end{proof}

As we saw in the above proof, Theorem~\ref{t:p>n} actually only requires the slightly weaker assumption
    $$\lim_{r\to\infty} \int_{B_2^n}\frac{\phi(rx)}{r^{p-n}}dx = 0.$$
We note that any finite measure on $\R^n$ with continuous density, that is strictly positive at $o$, satisfies the hypotheses of Theorem~\ref{t:p>n}.

\section{The case when p is zero}
\label{sec:p_zero}
This section is dedicated to proving the $p=0$ case of Theorems~\ref{t:first_sym} and \ref{t:first_finite}. Our technique can be traced through \cite{BLYZ13,HLYZ16,HXZ21,LU24}, especially \cite{BLYZ13}. We will be working with the entropy functional
\begin{equation}
    \mathcal{E}_\nu(f)=-\frac{1}{\nu(\sn)}\int_{\sn}\log f(u)d\nu(u); \quad \mathcal{E}_\nu(K):=\mathcal{E}_\nu(h_K),
    \label{eq:entropy}
\end{equation}
which will be taken over $C^+(\sn)$ or $C^+_e(\sn)$. We will consider the following optimization problem.

\begin{lemma}
\label{l:op_relate_p0}
    Let $\mu$ be a Borel measure on $\R^n$, and fix a Borel measure $\nu$ on $\sn$. Let $K\in\kno$. Then, for a fixed $a\in (0,\mu(\R^n))$, $K$ solves 
        \begin{equation}
        \label{eq:max_log}
        \sup\{\mathcal{E}_{\nu}(K):\mu(K)=a,K\in\kno\}\end{equation} if and only if $h_K$ solves $$\sup\{\mathcal{E}_{\nu}(f):\mu([f])=a,f\in C^+(\sn)\}.$$
     Additionally, if $K$ is also assumed to be symmetric in the first optimization problem, then the set $C^+(\sn)$ is replaced by $C_e^+(\sn)$ in the second optimization problem.
\end{lemma}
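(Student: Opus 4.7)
The proof will mirror very closely that of Lemma~\ref{l:op_relate}, with the functions $f^p$ replaced by $-\log f$. The essential structural fact behind both lemmas is the same: for any $f \in C^+(\sn)$ (strictly positive on the support of $\nu$, so that $\mathcal{E}_\nu(f)$ is finite), one has the pointwise inequality $h_{[f]}(u) \leq f(u)$ on $\sn$, while $[h_{[f]}] = [f]$, so the constraint $\mu([f]) = a$ is preserved when $f$ is replaced by $h_{[f]}$. The plan is to use the strict monotonicity of $t \mapsto -\log t$ to show that this replacement can only improve the entropy functional, which then lets us pass between the two problems.

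First I would establish the key inequality: since $h_{[f]} \leq f$ on $\sn$, one has $-\log h_{[f]}(u) \geq -\log f(u)$ pointwise, and integrating against $d\nu/\nu(\sn)$ yields
\begin{equation*}
    \mathcal{E}_\nu(h_{[f]}) \;\geq\; \mathcal{E}_\nu(f).
\end{equation*}
Next, observe that $\mu([h_{[f]}]) = \mu([f]) = a$, so $[f] \in \kno$ is a valid competitor in the first (geometric) optimization problem whenever $f$ satisfies the constraint in the second (functional) problem.

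Then the equivalence follows in two short steps. If $h_K$ solves the functional problem and $L \in \kno$ with $\mu(L) = a$, then $h_L \in C^+(\sn)$ satisfies $\mu([h_L]) = \mu(L) = a$, so $\mathcal{E}_\nu(K) = \mathcal{E}_\nu(h_K) \geq \mathcal{E}_\nu(h_L) = \mathcal{E}_\nu(L)$, showing $K$ solves the geometric problem. Conversely, if $K$ solves the geometric problem and $f \in C^+(\sn)$ with $\mu([f]) = a$, then $[f] \in \kno$ with $\mu([f]) = a$, hence
\begin{equation*}
    \mathcal{E}_\nu(h_K) \;=\; \mathcal{E}_\nu(K) \;\geq\; \mathcal{E}_\nu([f]) \;=\; \mathcal{E}_\nu(h_{[f]}) \;\geq\; \mathcal{E}_\nu(f),
\end{equation*}
so $h_K$ solves the functional problem. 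The symmetric case is handled identically: if $f$ is even then so is $h_{[f]}$ and $[f]$ is origin-symmetric, so the same chain of inequalities works with $\kno$ replaced by $\kne$ and $C^+(\sn)$ by $C^+_e(\sn)$.

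There is no real obstacle here; the only mild technicality is that $\mathcal{E}_\nu(f)$ is only finite when $f > 0$ on $\operatorname{supp}\nu$, so one restricts attention to strictly positive $f$, which is anyway the regime in which $[f] \in \kno$. Because $\nu$ will (in the applications) not be concentrated on any great subsphere, this is consistent with the rest of the framework and requires no additional hypotheses.
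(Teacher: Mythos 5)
Your proof is correct and uses the same approach as the paper: exploit $h_{[f]}\leq f$ pointwise together with the monotonicity of $t\mapsto-\log t$ to get $\mathcal{E}_\nu([f])\geq\mathcal{E}_\nu(f)$, and note that $[h_{[f]}]=[f]$ preserves the measure constraint. The paper's version is terser (it states only the key inequality and says "the equivalence follows since $[h_{[f]}]=[f]$"), but the two-directional argument you spell out is exactly what is meant; you also correctly read the constraint as $\mu([f])=a$ rather than the evident typo $\mathcal{E}_\nu([f])=a$ in the statement.
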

\begin{proof}
One must simply notice that the function $-\log x$ is monotonically decreasing to obtain, since $h_{[f]} (u) \leq f(u)$ point-wise,
\begin{align*}
    \nu(\sn)\mathcal{E}_{\nu}([f])&= -\int_{\sn} \log h_{[f]}(u)d\nu(u)
    \\
    &\geq -\int_{\sn} \log f(u)d\nu(u)=\mathcal{E}_{\nu}(f)\nu(\sn).
\end{align*}
The equivalence follows since $[h_{[f]}]=[f]$.
\end{proof}
The next lemma shows that solving the optimization problem solves our weighted log-Minkowski problem.
\begin{lemma}
\label{l:op_solvep0}
    Let $K\in\kno$, and let $\mu$ be a measure on $\R^n$ with a density whose Lebesgue set contains $\partial K$. If $K$ solves \eqref{eq:max_log}, then $\nu=\frac{\nu(\sn)}{S^\mu_0(K)}S^{\mu}_{K,0}.$
\end{lemma}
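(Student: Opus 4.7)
The plan is to follow the Lagrange multiplier strategy used in the proof of Lemma~\ref{l:op_solve}, but now adapted to the entropy functional, for which the natural perturbation is a logarithmic (exponential) one rather than an $L^p$ one. By Lemma~\ref{l:op_relate_p0}, the fact that $K\in\kno$ solves \eqref{eq:max_log} means $h_K$ maximizes $\mathcal{E}_\nu(f)$ over those $f\in C^+(\sn)$ (or $C^+_e(\sn)$ in the symmetric case) satisfying $\mu([f])=a$. Since any constraint-respecting perturbation of $h_K$ will leave us outside of $\kno$, we must combine a directional perturbation with a rescaling parameter.

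Fix an arbitrary $f\in C(\sn)$ (even if we are in the symmetric setting) and set $h_{t,s}(u)=h_K(u)e^{tf(u)+s}$, which is positive and continuous on $\sn$ for $t,s$ in a small neighborhood of the origin. Define
\begin{equation*}
\Phi(t,s)=\mathcal{E}_{\nu}(h_{t,s})=\mathcal{E}_{\nu}(K)-\frac{1}{\nu(\sn)}\int_{\sn}f(u)\,d\nu(u)\,t-s,
\end{equation*}
and
\begin{equation*}
\Psi(t,s)=\mu([h_{t,s}]).
\end{equation*}
By Corollary~\ref{cor:deriv_2} applied with the continuous function $tf+s$ (in particular with $f\equiv 1$ for the $s$-derivative), we have
\begin{equation*}
\Psi_t(0,0)=\int_{\sn}f(u)\,dS^{\mu}_{K,0}(u),\qquad \Psi_s(0,0)=S^\mu_{K,0}(\sn)=S^\mu_0(K)>0.
\end{equation*}
Thus the implicit function theorem yields a $C^1$ function $s=s(t)$ defined near $0$ with $s(0)=0$, $\Psi(t,s(t))=a$, and
\begin{equation*}
s'(0)=-\frac{\Psi_t(0,0)}{\Psi_s(0,0)}=-\frac{1}{S^\mu_0(K)}\int_{\sn}f(u)\,dS^{\mu}_{K,0}(u).
\end{equation*}

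Since $h_{t,s(t)}\in C^+(\sn)$ (and is even whenever $h_K$ and $f$ are) and satisfies the constraint, the maximality of $h_K$ forces $\frac{d}{dt}\Phi(t,s(t))\big|_{t=0}=0$. Computing directly,
\begin{equation*}
0=\Phi_t(0,0)+\Phi_s(0,0)\,s'(0)=-\frac{1}{\nu(\sn)}\int_{\sn}f\,d\nu+\frac{1}{S^\mu_0(K)}\int_{\sn}f\,dS^{\mu}_{K,0}.
\end{equation*}
As $f\in C(\sn)$ was arbitrary (restricted to $C_e(\sn)$ in the symmetric case, but in that case both measures $\nu$ and $S^\mu_{K,0}$ are even, so even test functions determine them), the Riesz representation theorem yields $\nu=\frac{\nu(\sn)}{S^\mu_0(K)}S^{\mu}_{K,0}$, as claimed.

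The only real obstacle is verifying that Corollary~\ref{cor:deriv_2} indeed covers the partial derivatives of $\Psi$ in the form stated: the hypothesis there requires that $\partial K$ lie in the Lebesgue set of the density of $\mu$, which is ensured by the standing assumption that $\mu$'s density contains $\partial K$ in its Lebesgue set, so the variational formula applies with test function $tf+s$. Everything else is a routine application of the implicit function theorem and the Riesz representation theorem.
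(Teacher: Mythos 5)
Your proof is correct and takes essentially the same approach as the paper: the paper likewise defines $\Phi^{(2)}(t,\epsilon)=\mathcal{E}_\nu(h_K e^{tf+\epsilon})$ and $\Psi^{(2)}(t,\epsilon)=\mu([h_K e^{tf+\epsilon}])$, applies Corollary~\ref{cor:deriv_2} to compute the partial derivatives, and invokes the implicit function theorem exactly as you do. You have merely written out the computations that the paper leaves as ``repeat the use of the implicit function theorem and the subsequent computations.''
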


\begin{proof}
The proof is similar to the proof of the second claim in Lemma~\ref{l:op_relate}; define 
$$ \Phi^{(2)}(t,\epsilon)=\mathcal{E}_{\nu}(h_{K}e^{tf+\epsilon})
$$
and
$$
\Psi^{(2)}(t,\epsilon)=\mu ([h_{K}e^{tf+\epsilon}]).
$$
It can again be noticed that
$$
\Psi^{(2)}(0,0)=\mu(K)=a,
$$
and
$$
\Psi_\epsilon^{(2)}(0,0)=S^\mu_0(K)\neq 0.
$$
Then, repeat the use of the implicit function theorem and the subsequent computations; note that one will have to use Corollary~\ref{cor:deriv_2} in place of Proposition~\ref{p:deriv}.
\end{proof}
We now prove our theorems for this section.
\begin{proof}[Proof of the $p=0$ case of Theorems~\ref{t:first_sym} and \ref{t:first_finite}]
In light of Lemmas~\ref{l:op_solvep0} and \ref{l:op_relate_p0}, it suffices to show a solution exists to \eqref{eq:max_log}. Let $K_\ell$ be a maximizing sequence, that is
$$\lim_{\ell\to\infty} \mathcal{E}_\nu(K_\ell)= \sup \left\{\mathcal{E}_\nu(K): \mu(K)=a, K \in \kno\right\}$$
for the $p=0$ case of Theorem~\ref{t:first_finite}, the not necessarily even case, and    
    \begin{equation}
    \label{log_min}
    \lim_{\ell\to\infty} \mathcal{E}_\nu(K_\ell)= \sup \left\{\mathcal{E}_\nu(K): \mu(K)=a, K \in \kne\right\}\end{equation}
    for the even case of (the $p=0$ case of) Theorem~\ref{t:first_finite} and (the $p=0$ case of) Theorem~\ref{t:first_sym}.

Let $r(a)$ be such that $\mu(r(a)B_2^n)=a$, and set $B_a = r(a)B_2^n$. Then, observe that $\mathcal{E}_{\nu}(B_a)=-\log r(a).$ Consequently,
\begin{equation}\lim_{\ell\to\infty}\mathcal{E}_{\nu}(K_\ell) \geq -\log r(a).\label{eq:bound}
\end{equation}
Following the approach from \cite[Theorem 6.3]{BLYZ13}, since each $K_\ell$ is non-empty, there exist ellipsoids $E_\ell$ and vectors $c_\ell$ via John's theorem \cite{FJ37} such that
$E_\ell \subset K_\ell \subset c_\ell + b(n)(E_\ell-c_\ell).$ Here, $b(n)=\sqrt{n}$ in the symmetric case (note $c_\ell = o$ in the symmetric case as well) and $b(n)=n$ in the general case. We then denote by $u_{1, \ell}, \ldots, u_{n, \ell} \in \sn$ the principal directions of $E_\ell$ indexed to satisfy $h_{1, \ell} \leq \cdots \leq h_{n, \ell},$ where $h_{i, \ell}=h_{E_\ell}\left(u_{i, \ell}\right),$ for $i=1, \ldots, n$. Defining the cross-polytopes
$$C_\ell=\left[\pm h_{1, \ell} u_{1, \ell}, \ldots, \pm h_{n, \ell} u_{n, \ell}\right],$$
one obtains
$C_\ell \subset E_\ell \subset \sqrt{n}C_\ell.$ Consequently, we deduce $$C_\ell \subset K_\ell \subset c_\ell + b(n)(\sqrt{n}C_\ell - c_\ell).$$

We must have that there exists a sequence of numbers $A_\ell$ such that $\vol_n(C_\ell) \geq A_\ell >0.$ Indeed, if $\vol_n(C_\ell)\to 0$, then $\vol_n(K_\ell)\to 0$. In the case when $\mu$ is finite, i.e., its density $\phi$ is in $L^1(\R^n)$, one obtains from dominated convergence that $\mu(K_\ell) \to 0$.  Similarly, when $ \phi \in L^q(\R^n)$ for some $q \in (1,\infty),$ we obtain from H\"older's inequality
$$\mu(K_\ell) = \int_{\R^n}\chi_{K_\ell}(x)\phi(x)dx \leq \|\phi\|_{L^q(\R^n)}\vol_n(K_\ell)^\frac{q-1}{q},$$
which again goes to zero if $\vol_n(K_\ell)$ does. In either case, this contradicts $\mu(K_\ell)=a$ for all $\ell$. 

Using the formula for the volume of a cross-polytope, we then obtain that
\begin{equation}
\label{supp_prod}
\prod_{i=1}^nh_{i,\ell}=\frac{n!\vol_n(C_\ell)}{2^n} \geq \frac{n!A_\ell}{2^n}\end{equation}
Notice that, with $\widetilde{C_\ell}=\left(\frac{n!A_\ell}{2^n}\right)^{-\frac{1}{n}}C_\ell$, \begin{equation}
\label{eq:entropy_relate}
\mathcal{E}_\nu\left(\widetilde{C_\ell}\right)=-\frac{1}{\nu(\sn)}\int_{\s^{n-1}} \log h_{\widetilde{C_\ell}}(u) d \nu(u)=-\frac{1}{n}\log\left(\frac{2^n}{n!A_\ell}\right)+\mathcal{E}_\nu(C_\ell).\end{equation}

By way of contradiction, suppose the sequence $\{K_\ell\}$ is not bounded. Then, the sequence $\{C_\ell\}$ is not bounded. Therefore, by passing to a subsequence if need be, one has
$\lim_{\ell\to\infty}h_{n,\ell}=\infty.$ One then obtains from \eqref{supp_prod} and \cite[Lemma 6.2]{BLYZ13} that $\{\mathcal{E}_{\nu}(\widetilde{C}_\ell)\}$, and therefore $\{\mathcal{E}_{\nu}(C_\ell)\}$ via \eqref{eq:entropy_relate}, are  not bounded from below. Since $\mathcal{E}_{\nu}(C_\ell) \geq \mathcal{E}_{\nu}(K_\ell)$, this yields $\{\mathcal{E}_{\nu}(K_\ell)\}$ is not bounded from below, contradicting \eqref{eq:bound} for $\ell$ large enough. Thus, we must have that $\{K_\ell\}$ is bounded. Then, appealing to  Propositions~ \ref{p:origin_int} and \ref{prop:non_degen}, for the $p=0$ cases of Theorems~\ref{t:first_sym} and \ref{t:first_finite}, respectively, we have that the sequences are non-degenerate. Our claim then follows from Blaschke selection.
\end{proof}

\section{The case of negative p}
\label{sec:neg_p}
In this section, we consider $p<0$. In contrast to the previous sections, we need an additional hypothesis on the measure. If $\mu$ is radially decreasing and rotationally invariant, then it has a density $\phi$ of the form $\phi(x)=\psi(|x|)$, where $\psi$ is a continuous and decreasing function on $(0,\infty)$. We will need explicitly that $\lim_{t\to 0^+}\psi(t) \in (0,\infty)$; this forces continuity of $\phi$ on all of $\R^n$. 

In this section, we will prove the cases $p<0$ of Theorems~\ref{t:first_sym} and \ref{t:first_finite}. The techniques from our approach can be traced through, e.g., \cite{CW06,CHLL20,CCL21,FHX23}. We start with the following two propositions for non-degeneracy. The first will be used in the proof of the $p<0$ case of Theorem~\ref{t:first_sym}. However, we need non-degeneracy when establishing boundedness on the optimizing sequence. Thus, the content strays from that in Proposition~\ref{prop:non_degen}.

\begin{proposition}
\label{prop:non_degen_2}
Let $n\in \mathbb{N}$ and let $\psi\in C^+(\R_+\setminus\{0\})\cap L^1(\R_+\setminus\{0\})$ be such that $\int_0^\infty\psi(t)t^{n-1}dt<\infty$ and normalized so that the rotationally invariant measure $\mu$ on $\R^n$ with density $\psi(|\cdot|)$ is probability. Fix $a \in (0,1)$ and suppose $\{K_\ell\}$ is a sequence of symmetric convex bodies such that $\mu(K_\ell)=a$ for all $\ell$ and one has $K_\ell$ converges to a symmetric closed convex set $K$ in the Hausdorff metric as $\ell\to\infty$. Then, $K$ is non-degenerate (i.e., has non-empty interior).

\end{proposition}
\begin{proof}
    By way of contradiction, suppose $K$ is degenerate. Then, we can find a sequence of directions $\theta_\ell$ such that $\theta_\ell\in \operatorname{argmin}_{v\in\sn}h_{K_\ell}(v)$ and, as $\ell\to\infty$, $h_{K_\ell}(\theta_\ell) \to 0$ and $\theta_\ell\to\theta$ for some $\theta\in\sn$. Since each $K_{\ell}$ is symmetric, it is contained in the symmetric slab supporting $K_\ell$ with outer-unit normals $\pm \theta_\ell$, i.e., if we set
    $$W(v)=\{x\in\R^n:|\langle v,x\rangle|\leq 1\},$$
    then $K_\ell \subset W\left(\frac{\theta_\ell}{h_{K_\ell}(\theta_\ell)}\right)=h_{K_\ell}(\theta_\ell)W(\theta_\ell)$. Without loss of generality, we may rotate $K$ and each $K_\ell$ using the rotational invariance of $\mu$ and assume that $\theta_\ell=e_n$ for all $\ell$. Then we have $K_\ell \subset h_{K_\ell}(e_n)W(e_n)$. We again decompose $\R^n$ into $\R^{n-1}\times e_n\R$, and write $x=(\bar x,x_n)$, with $\bar x \in \R^{n-1}$. Then,
    \begin{equation}
    \label{eq:a_calc}
    \begin{split}
        a=\mu(K_\ell)&< \mu\left(\R^{n-1}\times[- h_{K_\ell}(e_n), h_{K_\ell}(e_n)]\right)
        \\
        &= 2\int_{\R^{n-1}}\int_{0}^{h_{K_\ell}(e_n)}\psi(|(\bar x,x_n)|)dx_nd\bar x 
        \\
        &=2h_{K_\ell}(e_n) \int_{\R^{n-1}}\left(\frac{1}{h_{K_\ell}(e_n)}\int_{0}^{h_{K_\ell}(e_n)}\psi(|(\bar x,x_n)|)dx_n\right)d\bar x.
    \end{split}
    \end{equation}
    Abusing notation and keeping $|\,\cdot\,|$ for the Euclidean norm on $\R^{n-1}$, one then deduces that $\int_{\R^{n-1}}\psi(|\bar x|)d\bar x <\infty$. Notice that, by Lebesgue's differentiation theorem,
    $$\lim_{\ell\to\infty} \frac{1}{h_{K_\ell}(e_n)}\int_{0}^{h_{K_\ell}(e_n)}\psi(|(\bar x,x_n)|)dx_n = \psi(|\bar x|).$$
    Thus, we obtain
    \begin{equation}
    \label{eq:limiting_slabs}
    \lim_{\ell\to\infty} \int_{\R^{n-1}}\left(\frac{1}{h_{K_\ell}(e_n)}\int_{0}^{h_{K_\ell}(e_n)}\psi(|(\bar x,x_n)|)dx_n\right)d\bar x = \int_{\R^{n-1}}\psi(|\bar x|)d\bar x.\end{equation}
    To justify the limit: first, truncate $\R^{n-1}$ to $RB_2^{n-1}$ for arbitrary $R$. Then, from the slightly stronger version of Lebesgue's differentiation theorem, which we recall asserts that
    $$\lim_{\ell\to\infty} \frac{1}{h_{K_\ell}(e_n)}\int_{0}^{h_{K_\ell}(e_n)}|\psi(|(\bar x,x_n)|)-\psi(|(\bar x,0)|)|dx_n = 0,$$
    one easily finds that \eqref{eq:limiting_slabs} holds (but with $\R^{n-1}$ replaced by $RB_2^{n-1}$). Then, send $R$ to $\infty$ to establish \eqref{eq:limiting_slabs} via monotone convergence. Next, observe from polar coordinates that
   $$\int_{\R^n}\psi(|x|)dx = n\vol_n(B_2^n)\int_0^\infty \psi(t)t^{n-1}dt,$$
    i.e.,  given a function $\psi$ on $(0,\infty)$, the function $\psi(|x|)$ is integrable on $\R^n$ if and only if $\psi$ has finite $(n-1)$th moment on $\R_+$.  We now show the rudimentary fact that if an integrable function $\psi$ on $\R_+$ has finite $(n-1)$th moment, then it has finite $(n-2)$nd moment ($n\geq 2$). Indeed, we obtain from Jensen's inequality
    \begin{align*}\int_0^\infty \psi(t)t^{n-2}dt &= \left(\int_0^\infty \psi(t) dt\right)\cdot \left(\int_0^\infty \left(t^{n-1}\right)^\frac{n-2}{n-1}\frac{\psi(t)dt}{\int_0^\infty \psi(t) dt}\right) 
    \\
    &\leq \left(\int_0^\infty \psi(t) dt\right)\cdot\left(\frac{\int_0^\infty \psi(t)t^{n-1}dt}{\int_0^\infty \psi(t)dt}\right)^\frac{n-2}{n-1} < \infty.\end{align*}

    Consequently, passing to polar coordinates on $\R^{n-1}$, 
    \begin{equation}\label{eq:bounded}\int_{\R^{n-1}}\psi(|\bar x|)d\bar x = (n-1)\vol_{n-1}(B_2^{n-1})\int_{0}^{\infty}\psi(t)t^{n-2}dt <\infty.\end{equation} With this in hand, we can obtain our contradiction. We have from \eqref{eq:a_calc}
    \[
    0<\frac{a}{2}\left(\int_{\R^{n-1}}\left(\frac{1}{h_{K_\ell}(e_n)}\int_{0}^{h_{K_\ell}(e_n)}\psi(|(\bar x,x_n)|)dx_n\right)d\bar x\right)^{-1}<h_{K_\ell}(e_n).
    \]
    Taking the limit as $\ell\to\infty$, we obtain from \eqref{eq:limiting_slabs}
    \[
    0<\frac{a}{2}\left(\int_{\R^{n-1}}\psi(|\bar x|)d\bar x\right)^{-1}\leq 0,
    \]
    where we used \eqref{eq:bounded} to deduce the strictness away from zero. We have reached the desired contradiction, and we conclude the proof.
\end{proof}

The next proposition will be used to prove the $p<0$ case of Theorem~\ref{t:first_finite}.
\begin{proposition}
    Let $\mu$ be a rotationally invariant probability measure with locally integrable density continuous on $\R^n\setminus\{0\}$. Fix $a\in (\frac{1}{2},1)$. If $K$ is a closed convex set so that $o \in K$ and $\mu(K)=a$, then $o\in \text{int}(K)$.
    \label{p:origin_int_2}
\end{proposition}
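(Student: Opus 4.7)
The proof should follow the exact same blueprint as Proposition~\ref{p:origin_int}, argued by contradiction. The plan is as follows. Suppose, toward a contradiction, that $o \in \partial K$. Since $K$ is closed and convex and the origin lies on its boundary, there exists a supporting hyperplane to $K$ through the origin, i.e.\ a unit vector $v \in \sn$ such that $K \subset H^- := \{x \in \R^n : \langle x,v\rangle \leq 0\}$. Let $H^+ := \{x : \langle x,v\rangle \geq 0\}$. The hyperplane $\{x : \langle x, v\rangle = 0\}$ has Lebesgue measure zero, and since $\mu$ has a density (being a Radon measure with density continuous on $\R^n\setminus\{0\}$), this hyperplane also has $\mu$-measure zero. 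Consequently, $\mu(H^+) + \mu(H^-) = \mu(\R^n)$, and rotational invariance (via any isometry sending $v$ to $-v$) forces $\mu(H^+) = \mu(H^-) = \tfrac{1}{2}\mu(\R^n)$.

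Combining these observations, we obtain
$$a = \mu(K) \leq \mu(H^-) = \tfrac{1}{2}\mu(\R^n),$$
which directly contradicts the hypothesis $a > \tfrac{1}{2}\mu(\R^n)$. Hence $o$ cannot lie on $\partial K$, i.e.\ $o \in \text{int}(K)$.

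The main (minor) point of care, and the reason the strict inequality $a > \mu(\R^n)/2$ is needed here as opposed to the non-strict version in Proposition~\ref{p:origin_int}, is that $K$ is only assumed to be closed rather than compact. In the compact case of Proposition~\ref{p:origin_int} one may squeeze out an additional strict inequality $\mu(K) < \mu(H^+)$ from the fact that $K$ is a bounded proper subset of the unbounded half-space $H^+$, enabling the boundary case $a = \mu(\R^n)/2$ to still yield a contradiction. Without compactness this refinement is no longer automatically available, so the strict inequality on $a$ carries the argument. Thus there is no serious obstacle in the proof: it is the straightforward ``unbounded'' analogue of Proposition~\ref{p:origin_int}, with the gap in compactness compensated for by strengthening the hypothesis on $a$.
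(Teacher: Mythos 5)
Your proof is correct and follows the same contradiction argument as the paper: place $K$ in a half-space supported at the origin, use rotational invariance to give that half-space measure exactly $\mu(\R^n)/2$, and contradict $a > \mu(\R^n)/2$. Your concluding remark on why the strict inequality on $a$ is required here (no compactness to force $\mu(K) < \mu(H^+)$) also mirrors the paper's own observation following the proposition.
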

\begin{proof}
    By way of contradiction, suppose that the origin is on the boundary of $K$. Then, there exists $v\in\sn$ such that $h_K(v)=0$. Let $H^+=\{x:\langle x,v \rangle \geq 0\}$ and define $H^-$ similarly, but with $\geq$ replaced by $\leq$. Without loss of generality, $K\subseteq H^+$. Since $\mu$ is rotationally invariant, $\mu(H^+)=\frac{1}{2}$. But this means $\frac{1}{2} < a = \mu(K)\leq \mu(H^+)=\frac{1}{2}$, a contradiction. Thus, $K$ contains the origin in its interior.
\end{proof}
Unlike Proposition~\ref{p:origin_int}, we do not have boundedness in Proposition~\ref{p:origin_int_2}. Thus, we had to assume $a>\frac{1}{2}$ to prevent $K=H^+$. We next prove the $p<0$ cases of Theorems~\ref{t:first_sym} and \ref{t:first_finite} when the data measure $\nu$ is zero on every great subsphere. Then, we prove the general case via approximation.

\begin{lemma}
\label{l:vanishing}
    Let $\mu$ be a rotationally invariant, radially decreasing probability measure on $\R^n$ with continuous density. Let $p<0$. We consider two cases. 
    \begin{enumerate}
        \item Fix $a \in (0,1)$. Let $\nu$ be an even, finite Borel measure on $\sn$ that vanishes on all great subspheres. Then, there exists a symmetric convex body $K$ with $\mu(K)=a$ solving (the variant of)  \eqref{eq:max_p} (with $\kne$) for the given $\mu$ and $\nu$.
        \item Suppose also that, if $d\mu(x)=\psi(|\cdot|)dx,$ then $\psi\in L^1(\R_+)$. Fix $a \in (\frac{1}{2},1)$. Let $\nu$ be a finite Borel measure on $\sn$ that vanishes on all great subspheres. Then, there exists a convex body $K$ with $\mu(K)=a$ solving \eqref{eq:max_p} for the given $\mu$ and $\nu$.
    \end{enumerate}
\end{lemma}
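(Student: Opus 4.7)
The plan is to prove both cases by the variational approach via the functional $\Omega_\nu$ from \eqref{eq:functional_2}. By Lemmas~\ref{l:op_relate} and \ref{l:op_solve} it suffices to exhibit a maximizer of $\Omega_\nu$ subject to the constraint $\mu(K)=a$, over $\kne$ in case (1) and over $\kno$ in case (2). Since $p<0$ the functional $\Omega_\nu$ is strictly positive on $\kno$ whenever $\nu\neq 0$, and by evaluating at a centered ball of the correct $\mu$-measure we see the supremum is positive. Let $\{K_\ell\}$ be a maximizing sequence in the appropriate class.

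The first step is to show $\{K_\ell\}$ is bounded. For each $\ell$ pick $\theta_\ell\in\sn$ so that $\rho_{K_\ell}(\theta_\ell)$ is maximal; by \eqref{support function inequality2},
$$h_{K_\ell}(u)\geq\rho_{K_\ell}(\theta_\ell)\,g(\langle u,\theta_\ell\rangle),$$
with $g(t)=|t|$ in case (1) and $g(t)=t_+$ in case (2). Because $t\mapsto t^p$ is decreasing for $p<0$, this yields
$$\Omega_\nu(K_\ell)\leq -\frac{1}{p}\,\rho_{K_\ell}(\theta_\ell)^p\int_{\sn}g(\langle u,\theta_\ell\rangle)^p\,d\nu(u).$$
The hypothesis that $\nu$ vanishes on all great hemispheres (more precisely, that it puts no mass near the great subspheres $\theta^\perp\cap\sn$ in a way that makes the singular integrand $g(\langle u,\theta\rangle)^p$ controllable) guarantees that $M:=\sup_{\theta\in\sn}\int_{\sn}g(\langle u,\theta\rangle)^p\,d\nu(u)$ is finite; continuity of $\theta\mapsto\int g(\langle u,\theta\rangle)^p d\nu(u)$ in $\theta$, combined with compactness of $\sn$, turns the pointwise finiteness into a uniform bound. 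Then $\rho_{K_\ell}(\theta_\ell)\to\infty$ would force $\Omega_\nu(K_\ell)\to 0$, contradicting the fact that the maximizing sequence stays above a fixed positive value. Thus $\{K_\ell\}$ is uniformly bounded.

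By Blaschke selection, after passing to a subsequence $K_\ell\to K$ in the Hausdorff metric, where $K$ is a compact convex set (symmetric in case (1)) containing the origin. To promote $K$ to a convex body in the right class, one invokes the preparatory propositions: Proposition~\ref{prop:non_degen_2} handles case (1) directly (now that boundedness is established), giving $K\in\kne$. In case (2), dominated convergence (using that $\mu$ has a density in $L^1$) gives $\mu(K)=\lim_\ell\mu(K_\ell)=a>\mu(\R^n)/2$, and then Proposition~\ref{p:origin_int_2} forces $o\in\mathrm{int}(K)$, i.e.\ $K\in\kno$.

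The final step is to verify $K$ is actually a maximizer and satisfies $\mu(K)=a$. Because $K\in\kno$, there is $c>0$ with $h_K\geq c$ on $\sn$; by uniform Hausdorff convergence $h_{K_\ell}\to h_K$ uniformly, so eventually $h_{K_\ell}\geq c/2$ and hence $h_{K_\ell}^p\to h_K^p$ uniformly on $\sn$. Integrating against $\nu$ yields $\Omega_\nu(K_\ell)\to\Omega_\nu(K)$, while $\mu(K_\ell)\to\mu(K)$ is dominated convergence, giving $\mu(K)=a$. The single step that requires real care is the uniform bound on $\int g(\langle u,\theta\rangle)^p d\nu(u)$ in $\theta$; this is precisely where the vanishing hypothesis on $\nu$ is indispensable, and is the only place the argument differs substantively from the earlier positive-$p$ maximization proofs.
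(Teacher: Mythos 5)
Your overall architecture (reduce to maximizing $\Omega_\nu$ under the constraint $\mu(K)=a$ via Lemmas~\ref{l:op_relate} and \ref{l:op_solve}, then extract a convergent maximizing sequence) matches the paper, and your non-degeneracy and convergence steps are fine. The gap is exactly at the step you yourself flag as the delicate one: the claim that
$M:=\sup_{\theta\in\sn}\int_{\sn}g(\langle u,\theta\rangle)^p\,d\nu(u)$ is finite does not follow from the hypothesis that $\nu$ vanishes on all great hemispheres, and is false in general for $p<0$. Vanishing on great hemispheres only gives $\nu(\sn\cap\theta^\perp)=0$; it says nothing about the rate at which $\nu$ charges neighborhoods of $\theta^\perp$, while the integrand $g(\langle u,\theta\rangle)^p$ blows up there. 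Concretely, if $\nu$ has a continuous positive density (these are precisely the measures to which the lemma is applied in the proofs of Theorems~\ref{t:second} and \ref{t:second_sym}), then $\int_{\sn}|\langle u,\theta\rangle|^p\,d\nu(u)=\infty$ for every $\theta$ as soon as $p\le -1$, so your upper bound on $\Omega_\nu(K_\ell)$ reads $\Omega_\nu(K_\ell)\le+\infty$ and yields nothing. Continuity in $\theta$ plus compactness cannot rescue a pointwise infinite quantity.

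The paper circumvents this by establishing non-degeneracy \emph{first} (Propositions~\ref{prop:non_degen_2} and \ref{p:origin_int_2} do not presuppose boundedness) and then running the unboundedness argument through polarity: if $K_\ell$ is unbounded then $K_\ell^\circ$ degenerates, so $h_{K_\ell}^p=\rho_{K_\ell^\circ}^{-p}\to 0$ on $\omega_r(v_0)=\{u:|\langle u,v_0\rangle|>r\}$, while on the complementary band $\sn\setminus\omega_r(v_0)$ the integrand is bounded by the constant $R^{-p}$ (this is where the already-proved lower bound on $h_{K_\ell}$, equivalently the upper bound $K_\ell^\circ\subset RB_2^n$, is used) and the band has small $\nu$-measure by the vanishing hypothesis. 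This splitting is what converts ``$\nu$ gives zero mass to great subspheres'' into a usable estimate; your approach tries to absorb the singularity into a single integral against $\nu$, which cannot work. Note also that your proposed order of operations (boundedness first, non-degeneracy second) is incompatible with the correct boundedness argument, since the latter needs the uniform positive lower bound on $h_{K_\ell}$ as input.
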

\begin{proof}
    Let $K_\ell$ be a sequence tending to the supremum, that is
    \begin{equation}
    \label{eq:vanishes}
    \begin{cases}
    \lim_{\ell\to\infty}\Omega_\nu(K_\ell)=\sup\{\Omega_\nu(K): \mu(K)=a,K\in\kne\}>0 & \text{case 1,}
    \\
\lim_{\ell\to\infty}\Omega_\nu(K_\ell)=\sup\{\Omega_\nu(K): \mu(K)=a,K\in\conbod_o\}>0 & \text{case 2.}
    \end{cases}
\end{equation}
    It suffices to show that the sequence $\{K_\ell\}$ has a non-degenerate limit and is contained in a large ball; by Blaschke selection, this would then yield the result that there exists a convex body $K$ such that up to a subsequence, $K_\ell\to K$ and, by construction, $K$ is a convex body with $\mu(K)=a >0$ (which is symmetric for case 1).

    The fact that the limit is non-degenerate follows from Proposition~\ref{prop:non_degen_2} for case 1 and Proposition~\ref{p:origin_int_2} for case 2; we now show that $K_\ell$ is bounded. Again, by way of contradiction, suppose otherwise. From polarity, the sequence $K_\ell^\circ$ is degenerate. That is, by passing to a subsequence if needed, $K_\ell^\circ$ converges to a compact convex set $K^\circ$ and there exists $v_0\in\sn$ such that $h_{K^\circ}(v_0)=0$. Define the set, for every $r\in\sn$, $$\omega_r(v_0)=\{u\in\sn:|\langle u,v_0\rangle| >r\}.$$
    Then, by \cite[Lemma 6.2]{HLYZ18},
    $\rho_{K_\ell^\circ}\to 0$ on $\omega_r(v_0)$. Since we have already shown that $h_{K_\ell}$ is bounded from below, it follows that $\rho_{K^{\circ}_\ell}$ is bounded from above. Consequently, $K_\ell^\circ\subset RB_2^n$ for some $R>0$. Then:
    \begin{align*}
        -p\Omega_\nu(K_\ell) =\int_{\sn}h_{K_\ell}(u)^pd\nu(u) &= \int_{\omega_r(v_0)}\rho_{K_\ell^\circ}(u)^{-p}d\nu(u) + \int_{\sn\setminus\omega_r(v_0)}\rho_{K_\ell^\circ}(u)^{-p}d\nu(u)
        \\
        &\leq \int_{\omega_r(v_0)}\rho_{K_\ell^\circ}(u)^{-p}d\nu(u) + R^{-p}\nu\left(\sn\setminus\omega_r(v_0)\right).
    \end{align*}
    Since $\nu$ vanishes on all great subspheres, we obtain
    $$\lim_{r\to0^+}\nu\left(\sn\setminus\omega_r(v_0)\right) = \nu(\sn\cap v_0^\perp)=0.$$
    We deduce that $\lim_{\ell\to\infty} -p\Omega_\nu(K_\ell) = 0$, contradicting \eqref{eq:vanishes}. 
\end{proof}

To establish our next set of results, we will use the theory of partial differential equations (and, more specifically, Monge-Amp\`ere equations) in conjunction with an approximation argument. Suppose that we have $K\in\kno$ and a Borel measure $\nu$ on $\sn$ such that
$$\nu= S^{\mu}_{K,p}$$
where $\mu$ is a Borel measure on $\R^n$ with density $\phi$ whose Lebesgue set contains $\partial K$. If $\nu$ has strictly positive density $f$ on $\sn$ and $K$ has $C^2$ smooth boundary, then this rewrites as
the following Monge-Amp\`ere equation:
 \begin{equation}\label{eq:Monge-Ampere equation}
		h_{K}^{1-p}\phi(\nabla h_{K})\det(\nabla^2 h_{K}+h_{K}I)=f.
	\end{equation}
 Our first lemma transforms the Minkowski problem on $\sn$ into one on $\R^{n-1}$. We will use radial projections. This technique can be traced through \cite[pgs. 13-17]{PAVY78} and \cite{CW95,HYLJ_21,UJ26,SZ25}.  We use the usual notation that, for $e\in\sn$, $e^\perp=\{x\in\R^n:\langle x,e \rangle=0\}$ is the hyperplane through the origin orthogonal to $e$. Then, $H_e:=e^\perp + e$ is the hyperplane tangential to $\sn$ at $e$. We denote by $\pi_e:e^\perp\to\sn$ the radial projection of $H_e$ to $\sn$, i.e.,
\begin{equation}
\label{eq:radial_proj}
\pi_e(y)=\frac{y+e}{\sqrt{1+|y|^2}}, \quad \text{which yields } \quad \langle\pi_e(y), e\rangle=\frac{1}{\sqrt{1+|y|^2}}.\end{equation}

\begin{proposition}
\label{p:monge_on_perp}
    Fix $K\in\kno$ and let $h=h_K$ solve \eqref{eq:Monge-Ampere equation} for a given nonnegative function $f$ on $\sn$ and a Borel measure $\mu$ on $\R^n$ with continuous density $\phi$. Fix $e\in\sn$ and define $H_K,F:e^\perp\to\R_+$ as
    $$H_K(y)=h_K(e+y) \quad \text{and} \quad F(y)=\left(1+|y|^2\right)^{-\frac{n+p}{2}}f\left(\frac{e+y}{\sqrt{1+|y|^2}}\right).$$
    Then, $H_K$ solves the following Monge-Amp\`ere equation weakly on $e^\perp \approx \R^{n-1}$:
    $$H_K(y)^{1-p}\phi(\nabla H_K(y)-\left(H_K(y) - \langle \nabla H_K(y),e \rangle \right)\cdot e)\det\hess{H_K(y)} = G(y).$$
\end{proposition}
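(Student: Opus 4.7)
The plan is to parametrize $\sn$ near $e$ by $e^\perp$ via the radial projection $\pi_e$ and transport \eqref{eq:Monge-Ampere equation} to an equation on $e^\perp$. Set $\alpha(y):=\sqrt{1+|y|^2}$ and $u:=\pi_e(y)\in\sn$, so that $\langle u,e\rangle = 1/\alpha(y)$.

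First, the homogeneity of $h_K$ takes care of the zero-th and first-order pieces. Since $h_K$ is $1$-homogeneous, $H_K(y)=h_K(e+y)=\alpha(y)\,h_K(u)$, whence $h_K(u)^{1-p}=\alpha(y)^{p-1}H_K(y)^{1-p}$. Since $\nabla h_K$, understood as the gradient of the $1$-homogeneous extension, is $0$-homogeneous, $\nabla h_K(u)=\nabla h_K(e+y)$. Decomposing this vector along $e$ and $e^\perp$, and using both $P_{e^\perp}\nabla h_K(e+y)=\nabla H_K(y)$ (viewed as a vector in $e^\perp$) and the Euler identity $\langle\nabla h_K(e+y),e+y\rangle = H_K(y)$, one expresses $\nabla h_K(u)$ purely in terms of $H_K(y)$, $\nabla H_K(y)$, and $y$; this recovers the argument of $\phi$ appearing in the statement (with the caveat that the $e$ in the inner product should read $y$, since $\nabla H_K(y)\in e^\perp$).

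The main step is the determinantal identity
\[
\det\bigl(\nabla^2 h_K + h_K I\bigr)(u)\;=\;\alpha(y)^{n+1}\,\det\bigl(\nabla^2 H_K(y)\bigr).
\]
The cleanest route, I think, is to exploit that $H_K$ is the restriction to the affine hyperplane $e+e^\perp$ of the $1$-homogeneous extension of $h_K$, whose ambient $\R^n$-Hessian at $e+y$ has $e+y$ in its kernel. On the tangent space $u^\perp = T_u\sn$ that Hessian coincides with the spherical operator $\nabla^2 h_K + h_K I$; on $e^\perp$ it coincides with $\nabla^2 H_K(y)$; and the two $(n-1)$-planes $u^\perp$ and $e^\perp$ are related by the differential of $\pi_e$, whose Jacobian is the classical $\alpha(y)^{-n}$ for radial projection from the tangent plane to the sphere. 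Accounting for the extra factor of $\alpha(y)$ absorbed by the homogeneous rescaling gives $\alpha(y)^{n+1}$. As a sanity check, on $h_K\equiv 1$ one has $H_K(y)=\alpha(y)$ and a direct computation yields $\det\nabla^2 H_K(y)=\alpha(y)^{-(n+1)}$, while the left side is $1$.

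Substituting everything into \eqref{eq:Monge-Ampere equation} at $u=\pi_e(y)$ and dividing by $\alpha(y)^{(p-1)+(n+1)}=\alpha(y)^{n+p}$ yields the claimed equation on $e^\perp$, with right-hand side $\alpha(y)^{-(n+p)}f(\pi_e(y))=F(y)$ (so $G=F$ in the statement). To treat the \emph{weak} formulation (as $K$ is only assumed to lie in $\kno$, so $h_K$ need not be $C^2$), I would re-run the above at the level of Monge--Amp\`ere measures: $\pi_e$ is a bi-Lipschitz diffeomorphism on relatively compact subsets of $e^\perp$, and the pointwise identities above correspond to the change-of-variables for Monge--Amp\`ere measures on $u^\perp$ versus $e^\perp$. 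The main obstacle is purely one of bookkeeping---tracking all factors of $\alpha(y)$ and verifying that the subgradient-based Monge--Amp\`ere measure transforms correctly under $\pi_e$---rather than any essential geometric difficulty.
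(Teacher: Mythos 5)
Your plan is correct and lands on the right identity, but it routes the key determinantal step differently from the paper. You establish the pointwise identity $\det(\nabla^2 h_K + h_K I)(u)=\alpha(y)^{n+1}\det(\nabla^2 H_K(y))$ by restricting the ambient $\R^n$-Hessian of the $1$-homogeneous extension of $h_K$ to the two transversal hyperplanes $u^\perp$ and $e^\perp$ and tracking the gnomonic Jacobian $\alpha^{-n}$; this is verified by your sanity check and is fine. The paper never writes a pointwise determinant identity at all: it works directly at the level of Monge--Amp\`ere measures, observing that $\partial H_K(y)$ is the orthogonal projection of the face $F(K,\pi_e(y))=\partial h_K(e+y)$ onto $e^\perp$, so that $\mu_{H_K}(E)=\int_{\pi_e(E)}\langle u,e\rangle\,dS_K(u)$ by the tilt factor $\langle u,e\rangle$ on projected faces; it then substitutes $dS_K=\phi(\nabla h_K)^{-1}h_K^{p-1}f\,du$ and changes variables with Jacobian $\alpha^{-n}$. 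This gives the weak formulation immediately, with no appeal to $h_K\in C^2$ and no separate ``re-run at the level of Monge--Amp\`ere measures''---precisely the step you flag as remaining bookkeeping. Your route is more differential-geometric; the paper's is more measure-theoretic and shorter for the weak case, though both hinge on the same facts ($1$-homogeneity, coincidence of subgradients under $\pi_e$, Jacobian $\alpha^{-n}$). Two small points: you correctly observe that $\langle\nabla H_K(y),e\rangle$ in the statement should read $\langle\nabla H_K(y),y\rangle$ (indeed $\nabla H_K(y)\perp e$) and that $G$ should be $F$; but note also that the sign in front of $\bigl(H_K(y)-\langle\nabla H_K(y),y\rangle\bigr)e$ in the statement is opposite to what the computation gives. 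None of these affect the applications, since $\phi$ is rotationally invariant and $\nabla H_K(y)\perp e$, so only $|\nabla H_K(y)|^2+(H_K(y)-\langle\nabla H_K(y),y\rangle)^2$ enters.
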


\begin{proof}
 Observe that we can write from \eqref{eq:radial_proj}
    \begin{equation} 
    \label{eq:supp_relate_proj}
    H_K(y)=\sqrt{1+|y|^2}h_K(\pi_e(y)).\end{equation}
    From this representation, we obtain via \eqref{eq:faces} that
    $$\partial H_K(y) = \partial h_K(y+e)|_{y\in e^\perp}=F(K, y+e)|_{y\in e^\perp}=F(K, \pi_e(y))|_{y\in e^\perp}.$$
    Using the 1-homogeneity of the support function, we differentiate \eqref{eq:supp_relate_proj} to obtain
    $$\nabla H_K(y) = \nabla h_K(v)|_{v=\pi_e(y)}.$$
    Therefore, there exists $t\in \R$ such that
    \begin{equation}\nabla h_K(v) = \nabla H_K(y) -te,
    \label{eq:PDE_grad}
    \end{equation}
    with $v=\pi_e(y).$ Then, using \eqref{supp_homo},
    $$h_K(v) = \langle \nabla h_K(v),v \rangle = \langle \nabla H_K(y),v\rangle - t \langle e,v \rangle. $$
    Multiplying by $\sqrt{1+|y|^2}$ and using \eqref{eq:radial_proj} and \eqref{eq:supp_relate_proj}, we obtain
    $$t= \langle \nabla H_K(y),y\rangle - H_K(y).$$
    Consequently, \eqref{eq:PDE_grad} becomes
    \begin{equation}\nabla h_K(v) = \nabla H_K(y) +\left(H_K(y) - \langle \nabla H_K(y),y\rangle\right)\cdot e,
    \label{eq:PDE_grad_2}
    \end{equation}
    From \eqref{eq:sur_mae}, one has
    $$\mathcal{H}^{n-1}\left(\bigcup_{u \in \pi_e(E)}\left(F(K, u)|_{u\in e^{\perp}}\right)\right)=\int_{\pi_e(E)}\langle u, e\rangle dS_K(u).$$
    
    We may re-formulate \eqref{eq:Monge-Ampere equation} as equality in the following: 
    $$dS_K(u) = \frac{1}{\phi(\nabla h_K(u))}h_K(u)^{p-1}f(u)du$$
    (note the fact that $S_K(E)<S_K(\sn)<\infty$ manifestly yields the integrability of the right-hand side over $\sn$). Consequently, using \eqref{eq:mae} and \eqref{eq:sur_mae}, we obtain, for an arbitrary Borel $E\subset e^\perp$,
    \begin{align*}
        \int_{E}\det\hess{H_K(y)}d\mathcal{H}^{n-1}(y)
        & = \int_{\pi_e(E)}\langle u, e\rangle dS_K(u)
        \\
        & = \int_{\pi_e(E)} \frac{\langle u, e\rangle}{\phi(\nabla h_K(u))} h_K(u)^{p-1} f(u)du
        \\
        & = \int_{E} \frac{1}{\phi(\nabla H_K(y) +\left(H_K(y) - \langle \nabla H_K(y),y\rangle\right)\cdot e)} H_K(y)^{p-1} F(y)d\mathcal{H}^{n-1}(y),
    \end{align*}
    where, in the last line, we used that the determinant of the Jacobian of the coordinate transformation $v=\pi_e(y)$ is $(1+|y|^2)^{-\frac{n}{2}}.$ We note that at this point, we have shown weak equality in
    $$\det\hess{H_K(y)}=\frac{1}{\phi(\nabla H_K(y) +\left(H_K(y) - \langle \nabla H_K(y),y\rangle\right)\cdot e)} H_K(y)^{p-1} F(y).$$ The claim follows from this; if we define a measure $M_1$ on $e^\perp$ as the measure with density $\det\hess{H_K(y)}$, and $$dM_2(y)=\frac{1}{\phi(\nabla H_K(y) +\left(H_K(y) - \langle \nabla H_K(y),y\rangle\right)\cdot e)} H_K(y)^{p-1}F(y)dy,$$ then, by the definition of weak equality,
    $$\int_{E} g(y)dM_1(y) = \int_E g(y) dM_2(y)$$
    for every continuous function $g$ and compact $E$. Setting $$g(y)=\phi(\nabla H_K(y) +\left(H_K(y) - \langle \nabla H_K(y),y\rangle\right)\cdot e)H_K(y)^{1-p}$$ yields the claimed equality.
\end{proof}

Having established Proposition~\ref{p:monge_on_perp}, the proof of the following lemma is line-by-line the same as \cite[Theorem 3.1]{FHX23}.
\begin{lemma}
\label{l:regularity}
    Let $\mu$ be a Borel measure on $\R^n$ with continuous density $\phi$. Fix $p\in \R, p\neq 0.$ Suppose a function $f$ on $\sn$ and $K\in\kno$ satisfy $0<c_1 \leq f \leq c_2$ on $\sn$ and  $dS^{\mu}_{K,p}(u)=f(u)du.$ Then:
    \begin{enumerate}
        \item $\partial K$ is $C^1$ and strictly convex, and so $h_K$ is $C^1$ smooth on $\R^n\setminus\{0\}$;
        \item if $f$ is continuous, then the restriction of $h_K$ to $\sn$ is in $C^{1,\alpha}$ for any $\alpha\in(0,1)$;
        \item if $f\in C^\alpha(\sn)$ for any $\alpha\in (0,1)$, then $h_K$ is $C^{2,\alpha}$ on $\sn$.
    \end{enumerate}
\end{lemma}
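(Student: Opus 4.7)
The plan is to localize the problem via Proposition~\ref{p:monge_on_perp} so that the sphere-based Monge-Amp\`ere equation \eqref{eq:Monge-Ampere equation} becomes an equation on a piece of $\R^{n-1}$, and then to invoke Caffarelli's regularity theory. Fix $e\in\sn$ and set $H_K(y)=h_K(e+y)$ on $e^\perp$. By Proposition~\ref{p:monge_on_perp}, $H_K$ solves weakly
\[
\hess{H_K(y)}\;=\;\frac{H_K(y)^{p-1}\,F(y)}{\phi\bigl(\nabla H_K(y)+(H_K(y)-\langle\nabla H_K(y),y\rangle)\,e\bigr)}
\]
on $e^\perp\approx\R^{n-1}$, where $F(y)=(1+|y|^2)^{-(n+p)/2}f(\pi_e(y))$.

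The next step is to verify that on each ball $B_R\subset e^\perp$ the right-hand side is pinched between two positive constants. Since $K\in\kno$, we have $c\leq h_K\leq C$ on $\sn$ for some $c,C>0$, so $H_K$ is Lipschitz and bounded above and below by positive constants on $B_R$. Its subgradient lies inside $K$, and since $o\in\mathrm{int}(K)$ the subgradient stays in a compact subset of $\R^n\setminus\{0\}$ where $\phi$ is continuous and, by the bound $c_1\leq f\leq c_2$ together with the defining formula \eqref{eq:l_pSura_weighted} for $S^{\mu}_{K,p}$, strictly positive. Hence both $H_K^{p-1}$, $F$, and $\phi(\cdot)^{-1}$ are bounded above and below away from $0$.

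With these bounds in place I would apply three successive results of Caffarelli:
\textbf{(i)} the strict convexity theorem plus the interior $C^{1,\alpha}$ estimate for Monge-Amp\`ere equations with right-hand side bounded between positive constants, which gives that $H_K$ is strictly convex and $C^{1,\alpha}$ on $B_R$ for some $\alpha\in(0,1)$; covering $\sn$ by such charts yields claim $(1)$.
\textbf{(ii)} If $f\in C(\sn)$, then $F$ is continuous, and after the $C^{1,\alpha}$ regularity from (i) the argument of $\phi$ is H\"older continuous, so the full right-hand side is continuous; Caffarelli's estimate then upgrades $H_K$ to $C^{1,\alpha}$ for every $\alpha\in(0,1)$, giving $(2)$.
\textbf{(iii)} If $f\in C^\alpha(\sn)$, then after the bootstrap from (ii) the right-hand side is $C^\alpha$ (using that $\phi$ is continuous and $\nabla H_K\in C^\alpha$), and Caffarelli's $C^{2,\alpha}$ estimate gives $H_K\in C^{2,\alpha}$, whence $(3)$.

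The main difficulty is not the PDE machinery itself but keeping track of the nonlinear dependence of the right-hand side on $H_K$ and $\nabla H_K$ through the factor $\phi(\nabla H_K+\cdots)$: one has to bootstrap so that at each step the regularity of the composition $\phi\circ(\nabla H_K+\cdots)$ matches what is needed to apply the next Caffarelli estimate. Once this is done carefully, the argument is essentially identical to that of \cite[Theorem~3.1]{FHX23}, to which we refer for the full details.
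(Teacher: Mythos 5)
Your proposal follows exactly the same route as the paper: Proposition~\ref{p:monge_on_perp} is used to transfer \eqref{eq:Monge-Ampere equation} to a Monge--Amp\`ere equation on $e^\perp$, and then Caffarelli's strict convexity / $C^{1,\alpha}$ / $C^{2,\alpha}$ regularity theory is invoked as in \cite[Theorem~3.1]{FHX23}. The paper itself does not spell out these steps (it states that, given Proposition~\ref{p:monge_on_perp}, the proof is line-by-line the same as \cite[Theorem~3.1]{FHX23}), so you are simply filling in the outline the paper leaves implicit; no substantive difference.

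One small caution worth flagging, inherited from the paper's statement: for part (3) you write that the right-hand side is $C^\alpha$ ``using that $\phi$ is continuous and $\nabla H_K\in C^\alpha$.'' Mere continuity of $\phi$ does not make the composition $\phi\circ(\nabla H_K+\cdots)$ H\"older; one needs $\phi$ locally H\"older (e.g.\ $C^1$, which is what holds in \cite{FHX23} and in the applications of this lemma within the paper, where $\phi$ is $\psi(|\cdot|)$ with $\psi\in C^1$). Likewise your assertion that $c_1\le f$ forces $\phi>0$ on $\partial K$ is slightly circular before strict convexity is known; in the Gaussian setting of \cite{FHX23}, and in the paper's applications, $\phi$ is manifestly positive on the relevant region, so the issue is benign, but it is worth not presenting the implication as automatic.
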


With the regularity from Lemma \ref{l:regularity} in hand, we are ready to prove the main results of this section.

\begin{proof}[Proof of the $p<0$ case of Theorems~\ref{t:first_sym} and \ref{t:first_finite}]
    We first approximate $\nu$ with a sequence of Borel measures $\nu_\ell$ such that $d\nu_\ell=f_\ell du,$ with $f_\ell\in C^+(\sn)$ (or $f_\ell \in C^+_e(\sn)$ in the case of even $\nu$). We see that $\nu_\ell$ vanishes on all great subspheres.  Then, by Lemma~\ref{l:vanishing} and Lemma~\ref{l:op_solve}, there exists a sequence $\{K_\ell\}\subset \kno (\kne)$ such that $$\nu_\ell=\lambda_\ell S^{\mu}_{K_\ell,p}, \quad \lambda_\ell= \frac{\nu(\sn)}{S^\mu_p(K_\ell)}S^{\mu}_{K_\ell,p}.$$ But actually, we may assume that $0<f_j \in C^\alpha_e(\sn)$ for some $\alpha\in(0,1)$. Then, the following Monge-Amp\`ere equation holds weakly:
    $$h_{K_\ell}^{1-p}\psi(|\nabla h_{K_\ell}|)\det(\nabla^2 h_{K_\ell}+h_{K_\ell}I)=\frac{f_\ell}{\lambda_\ell},$$
    where we used that $\mu$ has a density of the form $\psi(|x|),$ with $\psi$ decreasing and continuous on $\R_+.$
    
    From Lemma~\ref{l:regularity} Item (3), $h_K$ is then $C^{2,\alpha}$ smooth, and consequently each $K_\ell$ is smooth and uniformly convex. We will again show that the sequence is uniformly bounded; via Blaschke selection, the existence of a $K\in\kno$ (or $\kne$) such that $\mu(K)=a$ and $\nu=\lambda S^\mu_{K,p}$ follows.

    The proof of the fact that the sequence is non-degenerate follows from Proposition~\ref{prop:non_degen_2} for the $p<0$ case of Theorem~\ref{t:first_sym} and Proposition~\ref{p:origin_int_2} for the $p<0$ case if Theorem~\ref{t:first_finite}. All that remains is the upper bound. Let $h_{K_\ell}$ obtain its maximum on $\sn$ at $\theta_\ell$. We again let $g(t)=|t|$ if $\nu$ is even and the $K_\ell$ are symmetric and $g(t)=t_+$ otherwise.
    
    Since $\nu$ is not concentrated on any great subsphere, there exists $\epsilon,\delta>0$ such that
	\begin{equation}\label{mu is not concentrated on any subsphere}
		\int_{\{u\in\sn:g(\langle u, \theta_\ell\rangle)>\delta\}}f_\ell(u)du>\epsilon>0
	\end{equation}
	for large enough $\ell$. Combining \eqref{mu is not concentrated on any subsphere}, \eqref{eq:Monge-Ampere equation}, and \eqref{eq:monge_2} yields
 \begin{align*}
		0&<\frac{\epsilon}{\lambda_\ell}<\int_{\{u\in\sn:g(\langle u, \theta_\ell\rangle)>\delta\}}\frac{f_\ell(u)}{\lambda_\ell}du
  \\
  &=\int_{\{u\in\sn:g(\langle u, \theta_\ell\rangle)>\delta\}}h_{K_\ell}^{1-p}(u)\psi(|\nabla h_{K_\ell}(u)|)\det(\nabla^2 h_{K_\ell}+h_{K_\ell}I)du
  \\
  & \leq {h_{K_\ell}(\theta_\ell)^{-p}} \int_{\{u\in\sn:g(\langle u, \theta_\ell\rangle)>\delta\}}\psi(|\nabla h_{K_\ell}(u)|)h_{K_\ell}(u)\det(\nabla^2 h_{K_\ell}+h_{K_\ell}I)du
  \\ 
  &\leq {h_{K_\ell}(\theta_\ell)^{n-p}} \int_{\{u\in\sn:g(\langle u, \theta_\ell\rangle)>\delta\}}\psi(|\nabla h_{K_\ell}(u)|)du.
\end{align*}
If $g(\langle u,\theta_\ell \rangle)>\delta$, then \eqref{eq:supp_max_deriv_com} yields
 \begin{equation}
      |\nabla h_{K_\ell}(u)| >\delta h_{K_\ell}(\theta_\ell).
      \label{eq:support_inequality 3}
 \end{equation}
However, since $\mu$ is radially decreasing, the function $\psi$ is decreasing. Hence, \eqref{eq:support_inequality 3} provides the bound
 \begin{equation}
 \label{eq:support_inequality 4}
 \psi(|\nabla h_{K_\ell}(u)|) \leq \psi(\delta h_{K_\ell}(\theta_\ell))
 \end{equation}
 whenever $u\in\sn$ is so that $g(\langle u,\theta_\ell \rangle)>\delta$. Therefore, we can continue using \eqref{eq:support_inequality 4} to obtain 
\begin{align*}
    0 <& \epsilon < \lambda_\ell {h_{K_\ell}(\theta_\ell)^{n-p}}\psi(\delta h_{K_\ell}(\theta_\ell))\int_{\{u\in\sn:g(\langle u, \theta_\ell\rangle)>\delta\}}du
    \\
&\leq\lambda_\ell{h_{K_\ell}(\theta_\ell)^{n-p}}\psi(\delta h_{K_\ell}(\theta_\ell))\vol_{n-1}(\sn).
	\end{align*}
  But, the final line of the above goes to zero as $\ell\to\infty$ (since $\mu$ satisfies Property~\textbf{(D)}$_p$), a contradiction. The final claim follows from Proposition~\ref{p:converse}.
\end{proof}

\section{The Small Mass Regime}
	In this section, we will analyze smooth solutions to \eqref{eq:Monge-Ampere equation} when $p > -n-1$ by utilizing degree theory; our technique follows largely \cite{HXZ21}, and can be traced through \cite{JL22,FLX23,FHX23} as well. We will impose that our function $\phi$ is radially decreasing and rotationally invariant. Therefore, for $x\in \R^n\setminus\{0\}$, $\phi(x)=\psi(|x|)$ for some continuous, decreasing function $\psi$ on $(0,\infty)$. In which case, \eqref{eq:Monge-Ampere equation} becomes, after a change of notation that will be exclusive to this section,
 \begin{equation}\label{eq:Monge-Ampere equation_2}
		h^{1-p}\psi(|\nabla h|)\det(h_{ij}+h\delta_{ij})=f.
	\end{equation}
  We follow the usual notation that $h_i = \partial_i h$ and $h_{ij}=\partial^2_{ij} h$.
 \begin{definition}
     For a given nonnegative function $f\in C(\sn)$, nonnegative, decreasing $\psi\in C^{1}(\R_+\setminus\{0\})$ and $p\in \R$,
     we say $h$ solves \eqref{eq:Monge-Ampere equation_2} for the triple $(f,p,\psi)$ if
     \begin{enumerate}
     \item $h\in C^{2}(\sn),$
         \item $h$ is nonnegative, convex and $1$-homogeneously extended to $\R^n$, and
         \item \eqref{eq:Monge-Ampere equation_2} holds true.
     \end{enumerate}
 \end{definition}

Our main theorems for this section are as follows. 
\begin{theorem}\label{degree theory}
		Fix $n\in \mathbb N$ and $0 < p<n$. Suppose $\psi\in C^1(\R_+)$ is strictly decreasing in its support, satisfies properties \textbf{(D)}$_p$ and \textbf{(S)}$_p$, is such that $\int_0^\infty\psi(t)t^{n-1}dt<\infty$ and normalized so that the rotationally invariant measure $\mu$ on $\R^n$ with density $\psi(|\cdot|)$ is probability. Suppose additionally that $\mu$ has $L^p$ isoperimetric function $I_{p}$ over $\kne$.
  
  Let $\nu$ be an even, finite Borel measure on $\sn$, not concentrated on any great hemisphere, such that $$\nu(\sn) < \min\{I_{p}(a), I_{p}(1-a)\},$$ where $a$ is a constant depending only on $\mu$ sufficiently close to $1$. In fact, one can take $a=1/2$. Then, there exist $K_1,K_2\in\kne$ such that $\mu(K_1)>a, \mu(K_2)<1-a,$ and
        $$S^\mu_{K_1,p} = \nu = S^\mu_{K_2,p}.$$
	\end{theorem}

To explain the necessity of our assumptions, we will first establish Theorem~\ref{degree theory} in the case when $\nu$ is a multiple of the spherical Lebesgue measure (the so-called isotropic curvature flow problem), in Lemma~\ref{uniqueness} and Proposition~\ref{p:uniqueness} below. We require that the density of $\mu$ is radially decreasing and continuous at zero at this moment. We establish that there are at least two solutions to the isotropic curvature problem (each of which is a centered Euclidean ball). However, we need that there are exactly two solutions; this is precisely Property \textbf{(S)}$_p$, given in Definition~\ref{def:Sp}. Pre-supposing such a fact may seem extreme. However, write $d\mu(x)=\psi(|x|)dx.$ Then, in Proposition~\ref{p:uni_2}, we show that if $\psi$ is concave far enough down its support, then Property \textbf{(S)}$_p$ holds. 

Finally, in Lemma~\ref{C^0 estimate} and Theorem~\ref{degree theory for smooth solution}, we use degree theory to solve Theorem~\ref{degree theory} when $\nu$ has smooth density $f$. The presence of Property \textbf{(D)}$_p$ is present at all steps. If we only care about a single solution, then we can drop the assumption of symmetry and expand the range of $p$.
\begin{theorem}\label{degree theory_2}
		Fix $p\geq 1$. Suppose $\psi\in C^1(\R_+)$ is strictly decreasing on its support, satisfies properties \textbf{(D)}$_p$ and \textbf{(S)}$_p$, is such that $\int_0^\infty\psi(t)t^{n-1}dt<\infty$ and normalized so that the rotationally invariant measure $\mu$ on $\R^n$ with density $\psi(|\cdot|)$ is probability. Suppose additionally that $\mu$ has $L^p$ isoperimetric function $I_{p}$ over $\kno$.
  
  Let $\nu$ be a finite Borel measure on $\sn$, not concentrated on any great hemisphere, such that $$\nu(\sn) < I_{p}\left(\frac{1}{2}\right).$$ Then, there exists $K\in\kno$ such that $\mu(K)\geq \frac{1}{2}$ and $\nu = S^\mu_{K,p}.$ Furthermore, if $\nu$ is even, then $K\in\kne$.
	\end{theorem}

 In the next lemma, we first consider the case when $f$ is a constant; this is the so-called isotropic curvature problem. Recently, Ivaki and Milman \cite{IM23}, in a swooping \textit{triumph}, established the following concerning solutions to isotropic curvature problems.

 \begin{proposition}[Solution to isotropic curvature problems, Theorem 1.3 in \cite{IM23}]
 \label{p:isotropic}
     Suppose $K\in\kne$ and that $\partial K$ is smooth, strictly convex and has strictly positive Gauss curvature $\kappa$.

     Let $\varphi:(0,\infty)\times (0,\infty) \to (0,\infty)$ be $C^1$ with $\partial_1 \varphi \geq 0, \partial_2 \varphi \geq 0$ such that at least one of these inequalities is strict. If $K$ satisfies
     $$\varphi(h,|\nabla h|)\kappa = h^{n+2},$$
     then $K$ is a centered Euclidean ball.
 \end{proposition}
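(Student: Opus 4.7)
The plan is to transfer the geometric statement to a PDE on the sphere and run a maximum-principle / rigidity argument. Since $\kappa^{-1} = \det(\nabla^2 h + h I)$ for $h = h_K$, the hypothesis reads
\[
h^{n+2}\,\det(\nabla^2 h + h I)(u) = \varphi(h(u), |\nabla h(u)|), \qquad u \in \sn,
\]
with the matrix $A := \nabla^2 h + h I$ positive definite (by strict convexity and positive Gauss curvature). I will show $h$ must be constant on $\sn$; once that is done, $K$ is a Euclidean ball, and the assumption $K = -K$ forces it to be centered.

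The first step is a pair of boundary inequalities at the global extrema of $h$. At a maximizer $u_+\in\sn$ one has $\nabla h(u_+) = 0$ and $\nabla^2 h(u_+) \preceq 0$, hence $A(u_+) \preceq h_{\max} I$ and $\det A(u_+) \leq h_{\max}^{n-1}$ with equality iff $A(u_+) = h_{\max} I$. Plugging into the PDE gives
\[
\varphi(h_{\max}, 0) \leq h_{\max}^{2n+1}.
\]
The analogous computation at a minimizer $u_-$ yields $\varphi(h_{\min}, 0) \geq h_{\min}^{2n+1}$. Combined with $\partial_1 \varphi \geq 0$ this already chains into $h_{\min}^{2n+1} \leq \varphi(h_{\min},0) \leq \varphi(h_{\max},0) \leq h_{\max}^{2n+1}$, but that is only $h_{\min}\leq h_{\max}$, which is trivial.

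The crux, and the main obstacle, is to upgrade these extremal inequalities to $h_{\min} = h_{\max}$. My plan is a maximum-principle argument for an auxiliary function on $\sn$ built from $h$ and the reverse-Gauss-map radial quantity $G(u) := h(u)^2 + |\nabla h(u)|^2 = \rho_K(n_K^{-1}(u))^2$. A useful first observation is that $\nabla G = 2 A\nabla h$, and since $A$ is positive definite the critical points of $G$ coincide with those of $h$; in particular the extremal values of $G$ are exactly $h_{\max}^2$ and $h_{\min}^2$. I would then consider a function of the form $F(u) = |\nabla h|^2/h^\alpha$ (or equivalently $\log(G/h^2)$), differentiate the PDE at an interior maximum of $F$, and use the sign conditions on $\partial_1 \varphi$ and $\partial_2 \varphi$ (with strictness in at least one of them) to contradict the maximum principle unless $\nabla h \equiv 0$. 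The balancing act is choosing the exponent $\alpha$ so that the contribution of $h^{n+2}\det A$ matches $\varphi$ on the nose; the monotonicity hypotheses on $\varphi$ then show the remainder has a definite sign.

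The hard part will be pinning down this exponent and showing that the linearization of the PDE at a maximum of $F$ produces a one-sided differential inequality whose sign forces $F \equiv 0$. The inequalities $\partial_1\varphi \geq 0$ and $\partial_2\varphi \geq 0$ both enter, because the linearized operator contains $h$-derivatives (handled by $\partial_1\varphi$) and $|\nabla h|$-derivatives (handled by $\partial_2\varphi$); the strictness in at least one argument is what upgrades the non-strict inequality of the maximum principle to the rigidity $\nabla h \equiv 0$. Once $h$ is constant the conclusion follows: $K$ is a Euclidean ball, centered at the origin by symmetry.
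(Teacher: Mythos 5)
There is no proof of this statement in the paper to compare against: Proposition~\ref{p:isotropic} is quoted verbatim from Ivaki--Milman \cite{IM23} (their Theorem 1.3) and used as a black box. Your proposal is therefore an attempt to reprove an external theorem, and as written it is a plan rather than a proof: the decisive step --- choosing the exponent $\alpha$ in $F=|\nabla h|^2/h^\alpha$ and showing that the linearized equation at a maximum of $F$ yields a one-signed inequality forcing $F\equiv 0$ --- is exactly the part you leave open.

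More importantly, the strategy cannot succeed in the stated generality, because nothing in your argument uses the hypothesis $K\in\kne$ except to center the ball at the very end. Symmetry is essential here: take $\varphi(x,t)=c\,x^{\,n+p+1}$ with $p\in(-n-1,0)$, which satisfies $\partial_1\varphi>0$, $\partial_2\varphi=0$; the equation $\varphi(h,|\nabla h|)\kappa=h^{n+2}$ then becomes the isotropic $L^p$ Minkowski equation $h^{1-p}\det(\nabla^2h+hI)=c$, which for negative $p$ is known to admit smooth, strictly convex, non-spherical (non-symmetric) solutions. Hence any pointwise maximum-principle argument that applies equally to non-symmetric bodies must fail. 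The proof in \cite{IM23} is not of this type: it is a global, integral argument resting on the sharp spectral gap of the Hilbert--Brunn--Minkowski (centro-affine) operator restricted to \emph{even} functions --- the local form of the log-Brunn--Minkowski inequality --- and that restriction to even test functions is precisely where the origin-symmetry of $K$ enters. A further small error: at a maximizer $u_+$ of $h$ on $\sn$ one has $\nabla_s h(u_+)=0$ but $\nabla h(u_+)=h_{\max}u_+$ by \eqref{eq:grad_relates}, so the second argument of $\varphi$ there is $h_{\max}$, not $0$.
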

 Recall that $\nabla$, the gradient on $\R^n$, and $\nabla_s$, the gradient on $\sn$, are related via 
 \begin{equation}
 \label{eq:grad_relates}
 \nabla h(u) = \nabla_s h(u) +h(u)u.\end{equation} We note that if $h$ is a centered Euclidean ball, then its support function $h$ is constant on $\sn$. Therefore, we refer to support functions of Euclidean balls as \textit{constant solutions} to \eqref{eq:Monge-Ampere equation_2}. If $h$ is a (nonnegative) constant function on $\sn$, then the above becomes $\nabla h(u) = h\cdot u$, in particular we have $|\nabla h(u)|=h$. Also, we recall that $\det(\nabla^2 h_K + hI)$ is precisely the reciprocal of the Gauss curvature of $K$ (as a function of its outer-unit normals).
 
 \begin{lemma}\label{uniqueness}
 Fix $p>-n-1$ and a nonnegative, decreasing $\psi\in C^{1}(\R_+\setminus\{0\})$. Let $c>0$ and suppose that $h$ solves \eqref{eq:Monge-Ampere equation_2} for the triple $(c,p,\psi)$. If $[h]\in\kne$, then $h$ has to be constant.
  \end{lemma}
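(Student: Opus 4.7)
The plan is to reduce \eqref{eq:Monge-Ampere equation_2} to the functional Monge--Amp\`ere equation appearing in Proposition~\ref{p:isotropic} of Ivaki--Milman, and then quote that theorem to conclude $[h]$ is a centered Euclidean ball. First, I would establish the regularity needed to apply the proposition. Since $h\in C^2(\sn)$ is convex and $1$-homogeneous, it is the support function of $[h]$, and the matrix $h_{ij}+h\delta_{ij}$ is positive semi-definite; writing $\kappa$ for the Gauss curvature of $\partial[h]$, one has $\det(h_{ij}+h\delta_{ij})=1/\kappa$. From the equation $h^{1-p}\psi(|\nabla h|)/\kappa = c$ with $c>0$ and $h>0$, every factor on the left must be strictly positive everywhere on $\sn$: so $\kappa>0$ (upgrading semi-definiteness to positive-definiteness) and $\psi(|\nabla h(u)|)>0$ pointwise. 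In particular, $\partial[h]$ is $C^2$, strictly convex, and of strictly positive Gauss curvature.

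Next, I would rearrange \eqref{eq:Monge-Ampere equation_2} in the form required by Proposition~\ref{p:isotropic}. Multiplying by $h^{n+1+p}/\psi(|\nabla h|)$ gives
\[
\frac{c\, h^{n+1+p}}{\psi(|\nabla h|)}\,\kappa \;=\; h^{n+2}.
\]
Define $\varphi:(0,\infty)\times(0,\infty)\to(0,\infty)$ by $\varphi(s,t):=c\,s^{n+1+p}/\psi(t)$. Since $\psi\in C^1$ and $\psi(|\nabla h|)>0$ on $\sn$, the function $\varphi$ is $C^1$ on a neighborhood of the range of $(h,|\nabla h|)$. Moreover,
\[
\partial_s\varphi(s,t)=\frac{c(n+1+p)\,s^{n+p}}{\psi(t)}>0 \quad\text{since } p>-n-1,
\]
and
\[
\partial_t\varphi(s,t)=-\frac{c\,s^{n+1+p}\psi'(t)}{\psi(t)^2}\geq 0 \quad\text{since } \psi\text{ is decreasing.}
\]
So $\varphi$ satisfies the monotonicity hypotheses of Proposition~\ref{p:isotropic}, with the first inequality strict.

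Finally, because $[h]\in\kne$ is origin-symmetric, smooth, strictly convex and of positive Gauss curvature, and satisfies $\varphi(h,|\nabla h|)\kappa=h^{n+2}$ for a $\varphi$ of the kind described above, Proposition~\ref{p:isotropic} forces $[h]$ to be a centered Euclidean ball. The support function of such a ball is constant on $\sn$, so $h$ is constant, as desired. The main obstacle is ensuring the structural hypotheses of Proposition~\ref{p:isotropic} really hold: the sign conditions on $\varphi$ both pass (the one on $\partial_s\varphi$ is precisely why we need $p>-n-1$, while the one on $\partial_t\varphi$ is precisely why we need $\psi$ decreasing), and the regularity and positivity of the curvature of $[h]$ follow from the equation itself once $c>0$ is used.
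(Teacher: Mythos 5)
Your proof is correct and takes essentially the same approach as the paper: you define the same $\varphi(s,t)=c\,s^{n+1+p}/\psi(t)$, verify the same two sign conditions (the first using $p>-n-1$, the second using that $\psi$ is decreasing), and invoke Proposition~\ref{p:isotropic} in the same way. The only difference is that you make explicit the preliminary regularity and positivity-of-curvature observations that the paper's proof treats as implicit.
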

\begin{proof}
      Set $\varphi(x,t)=cx^{n+p+1}\psi(t)^{-1}$. Observe that \eqref{eq:Monge-Ampere equation_2} can be re-written as $\varphi(h,|\nabla h|)\kappa=h^{n+2},$ where $\kappa$ denotes the Gauss curvature of $[h]$. Observe also that $\partial_t\varphi(x,t) = cx^{n+p+1}\frac{d}{dt}\psi(t)^{-1} \geq 0$ and $\partial_x \varphi(x,t)= c(n+p+1) x^{n+p}\psi(t)^{-1} >0$. Then, according to Proposition~\ref{p:isotropic}, $h$ is constant.
  \end{proof}

  Recently, Ivaki \cite{MI24} showed the following variant of \cite[Theorem 1.3]{IM23}.
  \begin{proposition}[Solution to isotropic curvature problems, Theorem 1.2 in \cite{MI24}]
 \label{p:isotropic_2}
     Suppose $K\in\kno$ has support function $h$. Suppose also that $\partial K$ is smooth, strictly convex and has strictly positive Gauss curvature $\kappa$.

     Let $\varphi:(0,\infty)\times (0,\infty) \to (0,\infty)$ be $C^1$ with $\partial_1 \varphi \geq 0, \partial_2 \varphi \geq 0$ such that at least one of these inequalities is strict. If $K$ satisfies
     $$\varphi(h,|\nabla h|)\kappa = 1,$$
     then $K$ is a centered Euclidean ball.
     Furthermore, if $\partial_1 \varphi =0$, then the same assumption holds with $K\in\conbod$.
 \end{proposition}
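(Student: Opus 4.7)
The plan is to adapt the maximum-principle and Newton-inequality technique that Ivaki-Milman used for Proposition~\ref{p:isotropic}. Writing the equation as $\det(h_{ij}+h\delta_{ij})=\varphi(h,|\nabla h|)$ on $\sn$, I would begin by analyzing critical points of $h$. At the global maximum $u_0\in\sn$, the spherical gradient vanishes, so by \eqref{eq:grad_relates} $|\nabla h(u_0)|=h(u_0)=h_{\max}$; moreover the spherical Hessian satisfies $\nabla_s^2 h(u_0)\leq 0$, giving $W(u_0):=(h_{ij}+h\delta_{ij})(u_0)\leq h_{\max} I$ and hence $\varphi(h_{\max},h_{\max})=\det W(u_0)\leq h_{\max}^{n-1}$. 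A parallel computation at the global minimum yields $\varphi(h_{\min},h_{\min})\geq h_{\min}^{n-1}$.

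The next step is to upgrade these two extremum inequalities into pointwise rigidity. Newton's arithmetic-geometric inequality applied to the positive symmetric matrix $W$ gives $\det(W)^{1/(n-1)}\leq \mathrm{tr}(W)/(n-1)$, with equality iff $W$ is a scalar multiple of the identity. Integrating over $\sn$ and using $\int_{\sn}\Delta_s h\,du=0$ together with the Minkowski identity $n\vol_n(K)=\int_{\sn}h\,dS_K=\int_{\sn}h\,\varphi(h,|\nabla h|)\,du$ yields a global comparison between $\int_{\sn}\varphi(h,|\nabla h|)^{1/(n-1)}\,du$ and $\int_{\sn}h\,du$. Combined with the strict monotonicity of $\varphi$ in at least one of its slots, and with the critical-point data above, this should force Newton's inequality to be an equality pointwise. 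Hence $W(u)=\lambda(u)I$, meaning $h$ is the support function of a Euclidean ball; the equation $\varphi(h,|\nabla h|)\kappa=1$ then pins the center at the origin.

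For the additional claim when $\partial_1\varphi=0$, the right-hand side involves only $|\nabla h|$, which makes sense for the $1$-homogeneous $C^1$ extension of $h$ to $\R^n\setminus\{0\}$ regardless of whether $o\in\operatorname{int}(K)$, so the rigidity argument goes through for any $K\in\conbod$; the conclusion that $K$ is a centered ball makes $o\in\operatorname{int}(K)$ automatic a posteriori. The main obstacle will be the rigidity step: the function $\Phi(t):=t^{-(n-1)}\varphi(t,t)$ is not assumed monotone, so $h_{\max}=h_{\min}$ cannot be drawn from the local extremum data alone; combining the Newton integral inequality with the only-partial monotonicity of $\varphi$ to force equality pointwise is the technical heart of the argument, and is where the strict sign of either $\partial_1\varphi$ or $\partial_2\varphi$ must be used decisively.
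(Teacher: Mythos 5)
The paper does not prove this proposition at all; it is quoted verbatim as Theorem~1.2 of Ivaki's paper \cite{MI24} (and the even case, Proposition~\ref{p:isotropic}, is similarly quoted from \cite{IM23}). So there is no in-paper argument to compare against; the question is only whether your sketch could plausibly reconstruct the cited result, and it cannot, for the reason you yourself flag but then do not resolve.

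Your two local observations are correct: at a max of $h$ one gets $\varphi(h_{\max},h_{\max})\le h_{\max}^{n-1}$, and at a min $\varphi(h_{\min},h_{\min})\ge h_{\min}^{n-1}$. Your integrated Newton inequality is also correct: from $\det W\le(\operatorname{tr}W/(n-1))^{n-1}$ with $W=h_{ij}+h\delta_{ij}$, one gets $\int_{\sn}\varphi(h,|\nabla h|)^{1/(n-1)}\,du\le\int_{\sn}h\,du$. But to conclude pointwise equality in Newton's inequality you need a \emph{matching lower bound} for the same integral, and nothing in the sketch produces one. The local extremum data only constrains $\varphi$ at two points and gives no integral information; and the Minkowski identity $n\operatorname{Vol}(K)=\int h\,\varphi(h,|\nabla h|)\,du$ that you invoke has the weight $h\,\varphi$, not $\varphi^{1/(n-1)}$, so it does not splice into the Newton bound. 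Absent monotonicity of $t\mapsto t^{1-n}\varphi(t,t)$ (which is \emph{not} implied by $\partial_1\varphi\ge 0$, $\partial_2\varphi\ge 0$ with one strict; consider $\varphi(x,y)=x^{n-1}e^{-x}y$, where $\partial_2\varphi>0$ but $t^{1-n}\varphi(t,t)=te^{-t}$ is not monotone), neither the two-point data nor the one-sided integral inequality gives rigidity. The actual proofs in \cite{IM23,MI24} do not run through Newton's inequality; they rest on a divergence-structure identity for the second fundamental form (in the tradition of Kohlmann and Brendle--Choi--Daskalopoulos) combined with a Heintze--Karcher-type estimate and a carefully chosen test function. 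That machinery is the missing ingredient, and it is not recoverable from the maximum-principle-plus-Newton framework you propose.

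A smaller but real issue in the $\partial_1\varphi=0$ case: you claim the argument "goes through for any $K\in\conbod$", but once $o$ may lie outside $K$, the relation $|\nabla h(u_0)|=h(u_0)$ at a critical point of $h$ fails to give useful sign information, because $h$ can be negative; the whole scheme built on extrema of $h$ has to be rethought, not just re-read. In \cite{MI24} this case is handled by observing that translation-invariance of the equation (which holds exactly when $\partial_1\varphi=0$) lets one normalize the center, rather than by the argument you gesture at.
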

 We can then obtain the following variant of Lemma~\ref{uniqueness}.
 \begin{lemma}
 \label{uniqueness_2}
     Fix $p\geq 1$ and a nonnegative, decreasing $\psi\in C^{1}(\R_+\setminus\{0\})$. Let $c>0$ and suppose that $h$ solves \eqref{eq:Monge-Ampere equation_2} for the triple $(c,p,\psi)$. If $p>1$ and $[h]\in\kno$, or $p=1$ and $[h]\in\conbod$, then $h$ has to be constant.
 \end{lemma}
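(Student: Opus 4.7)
The plan is to reduce the Monge-Amp\`ere equation to the form considered in Proposition~\ref{p:isotropic_2}. Starting from \eqref{eq:Monge-Ampere equation_2} with $f\equiv c$, and recalling that $\det(h_{ij}+h\delta_{ij})$ is the reciprocal of the Gauss curvature $\kappa$ of $K:=[h]$ (expressed as a function of the outer unit normal), the PDE rewrites as
\[
\varphi(h,|\nabla h|)\,\kappa = 1,\qquad \text{where}\quad \varphi(x,t):=\frac{c\,x^{p-1}}{\psi(t)}.
\]
Since the constant $c$ on the right-hand side is strictly positive, $\psi(|\nabla h|)$ must be strictly positive everywhere on $\sn$, so $\varphi$ is well defined, positive, and $C^1$ on the relevant range. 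The $C^2$ smoothness of $h$, coupled with the positivity of $\det(h_{ij}+h\delta_{ij})$ forced by the PDE, guarantees that $K$ has a $C^2$ strictly convex boundary with strictly positive Gauss curvature, as required by Proposition~\ref{p:isotropic_2}.

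Next I would verify the monotonicity hypotheses on $\varphi$ by direct differentiation:
\[
\partial_1\varphi(x,t)=\frac{c(p-1)x^{p-2}}{\psi(t)}\geq 0,\qquad \partial_2\varphi(x,t)=-\frac{c\,x^{p-1}\psi'(t)}{\psi(t)^2}\geq 0,
\]
where the first inequality uses $p\geq 1$ and the second uses that $\psi$ is decreasing.

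In the regime $p>1$, the quantity $\partial_1\varphi$ is strictly positive, so the first part of Proposition~\ref{p:isotropic_2} applies to $K\in\kno$ and forces $K$ to be a centered Euclidean ball; equivalently, $h$ is constant on $\sn$. In the remaining case $p=1$, one has $\partial_1\varphi\equiv 0$, so the addendum of Proposition~\ref{p:isotropic_2} must be invoked, which permits $K\in\conbod$ and requires only that $\partial_2\varphi$ be somewhere strictly positive. This last condition is delivered by the strict decrease of $\psi$ on the range of $|\nabla h|$, and again yields that $K$ is a centered ball.

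This argument is essentially a translation exercise: the substantive content lies entirely in Proposition~\ref{p:isotropic_2}, and the only thing to do is identify the correct function $\varphi$ and verify its sign conditions. The mild subtlety (and what I expect to be the main thing a careful reader might flag) is the case $p=1$, where the strictness needed in Proposition~\ref{p:isotropic_2} must come from $\psi$ rather than from the exponent; this mirrors the role played by Property~\textbf{(S)}$_p$ elsewhere in the paper.
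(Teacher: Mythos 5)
Your proof follows exactly the same route as the paper's: identify $\varphi(x,t)=c\,x^{p-1}\psi(t)^{-1}$, rewrite the PDE as $\varphi(h,|\nabla h|)\kappa=1$, check the sign of $\partial_1\varphi$ and $\partial_2\varphi$, and invoke Proposition~\ref{p:isotropic_2} (with its addendum in the $p=1$ case). Your remark that the $p=1$ case genuinely needs strict positivity of $\partial_2\varphi$ (hence strict decrease of $\psi$ somewhere on the range of $|\nabla h|$) is a fair observation; the paper's own proof glosses over this, relying on the fact that in every downstream application the lemma is invoked with $\psi$ strictly decreasing on its support.
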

 \begin{proof}
      Set $\varphi(x,t)=cx^{p-1}\psi(t)^{-1}$, where $p\geq 1$. Then, \eqref{eq:Monge-Ampere equation_2} can be re-written as $\varphi(h,|\nabla h|)\kappa=1,$ where $\kappa$ denotes the Gauss curvature of $[h]$. Also, observe that $\partial_t\varphi(x,t) = cx^{p-1}\frac{d}{dt}\psi(t)^{-1} \geq 0$ and, if $p>1$ $\partial_x \varphi(x,t)= c(p-1) x^{p-2}\psi(t)^{-1} >0$. If $p=1$, then we see $\partial_x \varphi(x,t)= 0$. Then, according to Proposition~\ref{p:isotropic_2}, $h$ is constant. 
  \end{proof}

  \subsection{Small p}
  \label{sec:small_mass_small_p} Now we turn our attention to $p \in (-n-1,n)$. We first show that, under Property \textbf{(D)}$_p$, there exist two constant solutions by making $c$ small enough when $p\in (-n-1,n)$. We recall that the support of a nonnegative function is given by
  $$\supp(f)=\{x:f(x) > 0\}.$$ We define the set 
  \begin{equation}
  \label{eq:critical_points}
  L_p(\psi)=\left\{t\in \supp(\psi): \psi(t) = -\frac{t\psi^\prime(t)}{(n-p)}\right \}.\end{equation}
  Recall that $\psi$ is said to have Property~\textbf{(D)}$_p$ if \[
    \lim\limits_{t\rightarrow\infty}t^{n-p}\psi(t)=0.
		\]
  
  \begin{proposition}\label{p:uniqueness}
      Fix $-n-1 < p <n$. Let $\psi\in C^1(\R_+)$ be a nonnegative, strictly decreasing function satisfying Property~\textbf{(D)}$_p$ such that $\psi \not\equiv 0$.
  
    For $c < \max_{t\in\supp(\psi)} t^{n-p}\psi(t)$, there exist two constant solutions $h_1,h_2$ solving \eqref{eq:Monge-Ampere equation_2} with $f=c$. If $\sup(L_p(\psi)) < \sup(\supp(\psi))$, then $c$ can be made small enough so that there are exactly two constant solutions. 
  \end{proposition}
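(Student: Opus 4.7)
The plan is to reduce everything to counting positive roots of the one-variable function $F(r) := r^{n-p}\psi(r)$. Indeed, for a constant $h \equiv r > 0$ extended $1$-homogeneously from $\sn$, one has $|\nabla h| = r$ and $\det(h_{ij}+h\delta_{ij}) = r^{n-1}$, so \eqref{eq:Monge-Ampere equation_2} with $f=c$ collapses to $F(r)=c$. Hence constant solutions are in bijection with positive roots of $F-c$. I would note that $F$ is continuous and nonnegative on $[0,\infty)$, $F(0)=0$ (since $p<n$), and $F(r)\to 0$ as $r\to\infty$ by Property~\textbf{(D)}$_p$; therefore $F$ attains a global maximum at some $t^* \in (0,\infty)$, and this maximum equals $M := \max_{t\in\text{supp}(\psi)} t^{n-p}\psi(t)$. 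For $c<M$, the intermediate value theorem applied separately on $(0,t^*)$ (where $F$ runs from $0$ up to $M>c$) and on $(t^*,\infty)$ (where $F$ runs back down to $0$) produces two distinct roots $r_1<t^*<r_2$, giving the first claim.

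For the sharper "exactly two" statement, I would differentiate to obtain
\[
F'(r) = r^{n-p-1}\bigl[(n-p)\psi(r) + r\psi'(r)\bigr],
\]
so the critical points of $F$ in $(0,\sup\text{supp}(\psi))$ are precisely $L_p(\psi)$. Write $\tau := \sup L_p(\psi)$ and $s := \sup\text{supp}(\psi)$; by hypothesis $\tau<s$. On the connected interval $(\tau,s)$, the continuous $F'$ never vanishes and hence keeps a constant sign; since $F>0$ on $(0,s)$ while $F\to 0$ as $r\to s^{-}$ (by continuity if $s<\infty$, by Property~\textbf{(D)}$_p$ if $s=\infty$), $F$ must be strictly decreasing on $(\tau,s)$. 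At the origin, the bracket $(n-p)\psi(r)+r\psi'(r)$ converges to $(n-p)\psi(0) > 0$, so $F$ is strictly increasing on some $(0,r_0) \subset (0,\tau)$.

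To finish, I would observe that on the compact set $[r_0,\tau]$ the function $F$ is strictly positive (since $r>0$ and $\psi(r)>0$ there) and hence admits a positive minimum $m$. For any $c$ with $0<c<\min(m, F(\tau))$: the equation $F(r)=c$ has no solution in $[r_0,\tau]$; strict monotonicity forces at most one solution on each of $(0,r_0)$ and $(\tau,s)$; and the intermediate value theorem supplies at least one solution in each (using $F(0)=0<c<F(r_0)$ and $F(\tau)>c>F(s^{-})=0$). Thus exactly two constant solutions exist. I do not foresee a serious obstacle — the whole statement reduces to one-variable calculus once one recognizes the bijection with roots of $F$ — with the only mildly delicate step being the monotonicity of $F$ on $(\tau,s)$, which rests on the connectedness of that interval together with the boundary behavior $F(s^{-}) = 0$.
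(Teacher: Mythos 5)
Your proposal is correct and takes essentially the same approach as the paper: reduce to studying the positive roots of the scalar function $g(t)=t^{n-p}\psi(t)$ and locate them relative to the critical set $L_p(\psi)$. The only real difference is cosmetic streamlining: for the first (``at least two'') claim the paper splits into two cases according to whether $\sup(L_p(\psi))$ equals $\sup(\text{supp}(\psi))$, invoking Property~\textbf{(D)}$_p$ via a limiting argument along critical points in the equality case, whereas you apply the intermediate value theorem once on each side of a global maximizer $t^*$ of $F$, which handles both cases uniformly; similarly, the paper proves separately that $0$ is not an accumulation point of $L_p(\psi)$, while your observation that the bracket $(n-p)\psi(r)+r\psi'(r)$ converges to $(n-p)\psi(0)>0$ as $r\to 0^+$ yields the same fact more directly. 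The one step I would encourage you to spell out is the choice of $r_0$: you should take $r_0 = \min\{\delta,\tau\}$, where $\delta>0$ is small enough that the bracket stays positive on $[0,\delta)$, so that $(0,r_0)\subset(0,\tau]$ is guaranteed and $F'>0$ there; this is implicit but worth saying.
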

  \begin{proof} First, notice that if $h$ is a constant, then \eqref{eq:Monge-Ampere equation_2} becomes $h^{n-p}\psi(h)=c$. Therefore, if we set $g(t) = t^{n-p}\psi(t)$, we are looking at the level curve $g(t)=c$. We will analyze the critical points of $g$. By the hypotheses on $\psi$, $g$ is differentiable on $(0,\infty)$. Observe that $g^\prime(t)=t^{n-p-1}((n-p)\psi(t)+t\psi^\prime(t))$ for $t\in (0,\infty)$. Therefore, $t^\star$ is a critical point of $g$ if and only if $t^\star\in L_p(\psi)$.
  
   First, we show that $0$ is not an accumulation point of $L_p(\psi)$. By way of contradiction, suppose it is. Then, there exists a sequence $t_k\in L_p(\psi)$ tending to zero. Observe then that
\begin{equation}\liminf_{t\to 0}\psi(t) \leq \lim_{k\to\infty}\psi(t_k) = -\frac{\psi^\prime(t_k)}{(n-p)}t_k = 0.
\label{eq:limit_bound}
\end{equation}
Indeed, fix a small $\epsilon>0$. Then, since $\psi$ is $C^1$ smooth on $[0,\epsilon]$, its derivative is bounded on that interval. Thus, there exists $M>0$ such that $|\psi^\prime(t)|\leq M$ for every $t\in [0,\epsilon]$. Recalling that $t_k\to 0$, we deduce that, for $\delta\in (0,\epsilon)$, there exists $N\in \mathbb{N}$ such that $t_k<\delta$ for $k>N$. Combining these facts, we have $\left|\frac{\psi^\prime(t_k)}{(n-p)}t_k\right|\leq \frac{M}{(n-p)}\delta$ for $k>N$. We deduce that the limit is zero by sending $\delta\to 0$. 

However, by the definition of the set $L_p(\psi)$, $-\frac{\psi^\prime(t_k)}{(n-p)}t_k=\psi(t_k)$ for all $k$. Consequently, from \eqref{eq:limit_bound} we deduce that $\psi(0)=0.$ But $\psi$ is decreasing and nonnegative; this implies $\psi \equiv 0,$ a contradiction. 

Let $t_1=\inf(L_p(\psi))$. Since $\lim_{t\to 0^+} g(t)=0$, we can at this moment pick any $c\in (0, g(t_1))$ and define $T_1 \in (0,t_1)$ to be so that $c= g(T_1)$. Notice that $g$ is increasing on $(0, t_1)$. Next, we consider two cases. The first case is $\sup(L_p(\psi)) <\sup(\supp(\psi)).$ Then, since $$\lim_{t\to \sup(\supp(\psi))} g(t)=0,$$ we find that $g$ is decreasing on $(\sup(L_p(\psi)),\sup(\supp(\psi)))$. Let $b=\inf_{t\in L_p(\psi)}g(t)$. Then, we lower $c$ if necessary (and thus pick a new $T_1$) so that $c < b$, to obtain $g(T_1)=c < b$ and there exists a unique $T_2 \in (\sup(L_p(\psi)),\sup(\supp(\psi)))$ such that $g(T_2) =c$; additionally, $g(t) \geq b > c$ on $(t_1,\sup(L_p(\psi)))$.
This shows that $T_1$ and $T_2$ are the only $t\in \supp(\psi)$ such that $g(t)=c$. 

Finally, we consider the case where $\sup(L_p(\psi)) = \sup(\supp(\psi)).$ Then, there exists a sequence $\{t_k\}\subset L_p(\psi)$ that tends to $\sup(\supp(\psi))$. Consequently, we have 
\begin{equation}
\label{eq:limit_critical}
0=(n-p)\lim_{k\to\infty}t_k^{n-p}\psi(t_k)=\lim_{k\to\infty}t_k^{n-p+1}(-\psi^\prime(t_k)).\end{equation}
Therefore, there exists $N$ such that, for $k\geq N,$
$$t_k^{n-p+1}(-\psi^\prime(t_k)) < c(n-p);$$
note that we take $N=N(c)$ to be the smallest possible for our given $c$. This rewrites as
$$t_k^{n-p}\psi(t_k) < c.$$
Therefore, $g(t_{N}) < c,$ but $g(t_{N-1}) > c$. By the intermediate value theorem, we pick $T_2$ to be the point such that $g(T_2)=c$ and $t_{N-1} < T_2 < t_{N}$.
% We note that $g$ is decreasing on $[t_{N-1},t_N]$.
  \end{proof}

The condition $\sup(L_p(\psi))< \sup(\supp(\psi))$ seems somewhat mysterious. We give a few sufficient conditions so that this is the case.
\begin{proposition}
\label{p:uni_2}
    Fix $-n-1 <p <n$. Let $\psi\in C^1(\R_+)$ be a nonnegative, strictly decreasing function satisfying Property~\textbf{(D)}$_p$ such that there exists $T>0$ such that $\psi$ is concave on $[T,\sup(\supp(\psi)))$. Then, there exists a small enough $c$ so that there are exactly two constant solutions $h_1,h_2$ that solve \eqref{eq:Monge-Ampere equation_2}.
\end{proposition}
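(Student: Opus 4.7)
The plan is to invoke Proposition~\ref{p:uniqueness}, for which it suffices to verify the single condition $\sup(L_p(\psi))<\sup(\text{supp}(\psi))$; the existence of a small enough $c$ yielding exactly two constant solutions then follows automatically. Write $S=\sup(\text{supp}(\psi))\in(0,\infty]$. I will show that $L_p(\psi)$ is bounded away from $S$ by combining the concavity hypothesis with the decay forced by Property~\textbf{(D)}$_p$.

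The key inequality comes from concavity of $\psi$ on $[T,S)$: the supporting tangent at $t\in(T,S)$ evaluated at $T$ yields
\[
\psi(T)\le\psi(t)+\psi'(t)(T-t),
\]
equivalently $-\psi'(t)\ge(\psi(T)-\psi(t))/(t-T)$. Since $\psi$ is strictly decreasing on its support and $T<S$, one has $\psi(T)>0$. Moreover $\psi(t)\to 0$ as $t\to S^-$: when $S=\infty$ this is Property~\textbf{(D)}$_p$ combined with $p<n$, and when $S<\infty$ it follows from continuity of $\psi$ at $S$ together with $\psi\equiv 0$ beyond $S$. Hence for $t$ sufficiently close to $S$ one has $\psi(t)<\psi(T)/2$, and multiplying by $t$ gives
\[
-t\psi'(t)\ge\frac{t\,\psi(T)}{2(t-T)}.
\]
The right-hand side stays above a positive constant $c_0$ in the limit $t\to S^-$: it tends to $\psi(T)/2$ when $S=\infty$, and to $S\psi(T)/(2(S-T))$ when $S<\infty$.

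On the other hand, $(n-p)\psi(t)\to 0$ as $t\to S^-$, so there is a neighborhood of $S$ on which $(n-p)\psi(t)<c_0\le-t\psi'(t)$, excluding such $t$ from $L_p(\psi)$ by the defining relation \eqref{eq:critical_points}. Therefore $\sup(L_p(\psi))<S=\sup(\text{supp}(\psi))$, and Proposition~\ref{p:uniqueness} supplies a $c$ small enough that \eqref{eq:Monge-Ampere equation_2} with $f\equiv c$ admits exactly two constant solutions. The only subtle point is handling the two regimes $S<\infty$ and $S=\infty$ in parallel, but the single tangent-line estimate above covers both uniformly; no further obstacle is expected.
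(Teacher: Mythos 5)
Your proof is correct, and it takes a genuinely different route from the paper's. The paper argues structurally on $L_p(\psi)$ itself: from the defining relation \eqref{eq:critical_points} and the strict decrease of $\psi$ it deduces that $\psi'$ must be strictly increasing along any two points of $L_p(\psi)$, which is incompatible with $\psi'$ being decreasing on $[T,\sup(\text{supp}(\psi)))$ by concavity, so $L_p(\psi)$ can meet that interval in at most one point and its supremum stays below $\sup(\text{supp}(\psi))$. You instead excise an entire neighborhood of $\sup(\text{supp}(\psi))$ from $L_p(\psi)$ by a quantitative estimate: the supporting-line inequality for the concave $\psi$ gives the uniform lower bound $-t\psi'(t)\geq t\psi(T)/(2(t-T))\geq\psi(T)/2>0$ near the endpoint, while $(n-p)\psi(t)\to 0$ there (via \textbf{(D)}$_p$ and $p<n$ when the support is unbounded, via continuity and the vanishing of $\psi$ at the endpoint otherwise), so the equation in \eqref{eq:critical_points} cannot hold close to the endpoint. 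Your version buys an explicit gap between $\sup(L_p(\psi))$ and $\sup(\text{supp}(\psi))$ and treats the bounded- and unbounded-support cases transparently; the paper's is shorter, though its closing assertion that $\sup(L_p(\psi))\leq T$ is a slight overstatement (one critical point in $[T,\sup(\text{supp}(\psi)))$ is still possible), which does not affect the conclusion. Both arguments conclude by invoking Proposition~\ref{p:uniqueness}.
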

\begin{proof}
Clearly, if the cardinality of $L_p(\psi)$ is finite, then $\sup(L_p(\psi))< \sup(\supp(\psi))$. Suppose that the cardinality of $L_p(\psi)$ is infinite. Then, if $t_1,t_{2}\in L_p(\psi)$, with $t_1 < t_{2}$, one has
$$-\frac{\psi^\prime(t_{2})}{(n-p)}t_{2} = \psi(t_{2}) < \psi(t_{1})= -\frac{\psi^\prime(t_{1})}{(n-p)}t_{1}.$$
Since $\psi$ is decreasing, $-\psi^\prime$ is a positive function. Then, the above re-writes as 
$$- \psi^\prime(t_{2}) < - \psi^\prime(t_{1}) \frac{t_{1}}{t_{2}} < - \psi^\prime(t_{1}). $$
Thus, the fact that $\psi$ is decreasing implies that $\psi^\prime$ is strictly increasing in $L_p(\psi)$. However, by hypothesis, $\psi^\prime$ decreases in $[T,\sup(\supp(\psi)))$. For both facts to be true, we must have $\sup(L_p(\psi)) \leq T$. The claim then follows from Proposition~\ref{p:uniqueness}. 
\end{proof}

Proposition~\ref{p:uni_2} is a bit weak in the sense that many of the measures that we care about are eventually convex. For example, $e^{-t}$ is convex on $\R_+$, and $e^{-t^2/2}$ is concave on $[0,1)$ and convex on $(1,\infty)$. We now show that, if $\psi=e^{-V}$ where $t\mapsto V(e^t)$ is convex (e.g., $V$ convex), $L_p(\psi)$ is a singleton.
\begin{proposition}
    \label{p:uni_3}
    Fix $-n-1 <p <n$. Let $\psi\in C^1(\R_+)$ be a nonnegative, strictly decreasing function satisfying Property~\textbf{(D)}$_p$ such that $\psi=e^{-V}$ with $t\mapsto V(e^t)$ convex. Then, there exists a small enough $c$ so that there are exactly two constant solutions $h_1,h_2$ that solve \eqref{eq:Monge-Ampere equation_2}.
\end{proposition}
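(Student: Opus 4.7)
The plan is to reduce to Proposition~\ref{p:uniqueness} by establishing the hypothesis $\sup(L_p(\psi)) < \sup(\mathrm{supp}(\psi))$. The natural change of variable is $W(s) := V(e^s)$, which is convex by hypothesis and $C^1$ on its effective domain (since $\psi \in C^1$ is strictly positive there, so $V = -\log\psi$ is $C^1$). Using the identity $\psi'/\psi = -V'$, the defining equation $(n-p)\psi(t) + t\psi'(t) = 0$ of $L_p(\psi)$ becomes $tV'(t) = n-p$, which in terms of $W$ simply reads $W'(s) = n-p$ with $s = \log t$. Since $W$ is convex and $C^1$, its derivative $W'$ is continuous and non-decreasing, so the preimage $\{s : W'(s) = n-p\}$ is a (possibly empty) closed interval.

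Next, I would use Property~\textbf{(D)}$_p$ to push the upper endpoint of this interval strictly below $\sup(\mathrm{supp}(\psi))$. If $\sup(\mathrm{supp}(\psi)) = \infty$ and $W'(s) \le n-p$ for every $s$, then integration from $0$ yields $W(s) \le W(0) + (n-p)s$, so
\[
t^{n-p}\psi(t) = e^{(n-p)s - W(s)} \ge e^{-W(0)} > 0 \quad \text{for } t = e^s,
\]
contradicting $\lim_{t\to\infty} t^{n-p}\psi(t) = 0$. Hence some $s_1$ satisfies $W'(s_1) > n-p$, and by monotonicity $W'(s) > n-p$ for every $s \ge s_1$; thus $L_p(\psi) \subset (0, e^{s_1}]$ and $\sup L_p(\psi) \le e^{s_1} < \infty = \sup(\mathrm{supp}(\psi))$. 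If instead $\sup(\mathrm{supp}(\psi)) = T < \infty$, then $W(s) \to +\infty$ as $s \to \log T^-$, and convexity forces $\lim_{s \to \log T^-} W'(s) = +\infty$ (otherwise $W$ would be dominated by an affine function on $(-\infty, \log T)$), again yielding some $s_1 < \log T$ with $W'(s_1) > n-p$ and the same conclusion.

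With the inequality $\sup(L_p(\psi)) < \sup(\mathrm{supp}(\psi))$ established, the statement follows immediately from Proposition~\ref{p:uniqueness}. I do not expect a significant obstacle: the key observation is that the change of variables $W = V \circ \exp$ makes the critical equation for $L_p(\psi)$ affine in $W'$, after which Property~\textbf{(D)}$_p$ supplies exactly the slope bound needed. The only minor bookkeeping is the finite-support case, handled by the standard fact that convex functions blowing up at an endpoint have diverging slopes.
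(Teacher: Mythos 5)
Your proof is correct and follows the paper's route: the substitution $W(s)=V(e^s)$ turns the defining condition of $L_p(\psi)$ into $W'(s)=n-p$, and Proposition~\ref{p:uniqueness} finishes. Your write-up is actually more careful than the paper's on one point. The paper asserts that $L_p(\psi)$ must be a singleton because ``$(V(e^t))'$ is increasing,'' but convexity of $W$ only yields a non-decreasing $W'$, so the level set $\{W'=n-p\}$ is a closed interval, which could a priori run all the way to $\sup\text{supp}(\psi)$ (e.g.\ if $\psi(t)$ were proportional to $t^{-(n-p)}$ on a tail, making $W$ affine there with slope exactly $n-p$). You correctly identify that Property~\textbf{(D)}$_p$ is precisely what excludes this degeneracy --- it forces $W'$ to eventually strictly exceed $n-p$ when $\sup\text{supp}(\psi)=\infty$, and the standard divergence of $W'$ at a finite blow-up endpoint handles the bounded-support case --- so $\sup L_p(\psi)<\sup\text{supp}(\psi)$, i.e.\ Property~\textbf{(S)}$_p$, is established rigorously, closing a small gap in the paper's argument.
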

\begin{proof}
As shown in the proof of Proposition~\ref{p:uniqueness}, if we set $g(t)=t^{n-p}\psi(t)$, then $\lim_{t\to0}g(t)=\lim_{t\to \infty}g(t)=0$. Thus, $L_p(\psi)$ is non-empty, being the critical points of $g$. We now claim that $L_p(\psi)$ is a singleton. Inserting that $\psi=e^{-V}$, the condition $t\in L_p(\psi)$ can be written as
    \begin{equation}
    \label{eq:log_con_lppsi}
    \frac{(n-p)}{t}=V^\prime(t).\end{equation}
    We take a moment to consider the illuminating case where $\psi$ is log-concave, and so $V$ is convex. Then, $V^\prime$ is increasing. However, the function $(n-p)/t$ is decreasing; these two functions can agree at only a point. 

    In our more general case, with $V(e^t)$ convex, we have that $(V(e^t))^\prime$ is increasing. Also, from the chain rule, $(V(e^t))^\prime = V^\prime(e^t)e^t$. Therefore, \eqref{eq:log_con_lppsi} becomes, from the chain rule and a change of variable,
    $$(n-p)=(V(e^t))^\prime,$$
    which ipso facto can only be true at one point. The claim then follows from Proposition~\ref{p:uniqueness}.
\end{proof}
We note that Proposition~\ref{p:uniqueness} yields a procedure to determine the number of solutions given the value of $c$ given $\psi$ and knowing $p\in (-n-1,n)$. For example, the Gaussian case. We outline this in a proposition.
  \begin{proposition}
      Suppose $h\in C^2$ solves the equation
      \[e^{-|\nabla h|^2/2}h^{1-p}\det(h_{ij}+h\delta_{ij})=c(2\pi)^{\frac{n}{2}}.\]
      If $p=1$, or $p>-n-1$ and $h$ is even and positive, or $p>1$ and $h$ is positive, then $h$ is constant, i.e., $[h]$ must be a centered Euclidean ball. Furthermore, we can characterize the number of solutions. If $p>n$, then for every $c>0$, there is exactly one solution. If $p=n,$ there is exactly one solution for $c\in (0,(2\pi)^{-n/2})$ and no solution for $c>(2\pi)^{-n/2}$. Finally, suppose $p\in (-n-1,n)$. Let $C(n,p)=(2\pi)^{-n/2}(n-p)^\frac{n-p}{2}e^{-(n-p)/2}$.
      \begin{enumerate}
          \item If $c< C(n,p)$, there are exactly two solutions.
          \item If $c=C(n,p)$, there is exactly one solution.
          \item If $c>C(n,p)$, there are no solutions.
      \end{enumerate}
  \end{proposition}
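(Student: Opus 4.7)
The plan is to recognize that this is just the specialization of the preceding framework to the Gaussian density, so the structural (``$h$ constant'') part follows from Lemmas~\ref{uniqueness} and \ref{uniqueness_2}, and the counting part reduces to a one-variable calculus problem. First, I would observe that the equation is precisely \eqref{eq:Monge-Ampere equation_2} for the triple $(c,p,\psi)$ with $\psi(t)=(2\pi)^{-n/2}e^{-t^2/2}$; indeed, multiplying through by $(2\pi)^{-n/2}$ recovers our equation. Since $\psi\in C^1([0,\infty))$ is strictly positive, strictly decreasing, log-concave, and has $t^{n-p}\psi(t)\to 0$ as $t\to\infty$ for every $p\in\R$, property \textbf{(D)}$_p$ holds for all admissible $p$.

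Next, for the rigidity claim, I would invoke the two uniqueness lemmas case by case. If $p>-n-1$ and $h$ is even and positive, then $[h]\in\kne$, so Lemma~\ref{uniqueness} forces $h$ to be constant. If $p>1$ and $h$ is positive, then $[h]\in\kno$, so Lemma~\ref{uniqueness_2} applies; if $p=1$, then $[h]\in\conbod$ automatically, so the $p=1$ branch of Lemma~\ref{uniqueness_2} applies. In all three cases $h$ must reduce to a constant $h_0$, so it remains only to count the constants that satisfy the equation.

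For that, I would substitute $h\equiv h_0>0$ into the equation. Using that the $1$-homogeneous extension satisfies $\nabla h(u)=h_0 u$ and $\det(h_{ij}+h\delta_{ij})=\det(h_0 I_{n-1})=h_0^{n-1}$ on $\sn$, the equation collapses to
\[
g(h_0):=h_0^{n-p}e^{-h_0^2/2}=c(2\pi)^{n/2}.
\]
A quick differentiation gives $g'(t)=t^{n-p-1}e^{-t^2/2}\bigl((n-p)-t^2\bigr)$. When $p>n$, $g'<0$ and $g$ decreases from $+\infty$ to $0$, so there is exactly one solution for each $c>0$. When $p=n$, $g$ decreases from $1$ to $0$, yielding a unique solution precisely when $c(2\pi)^{n/2}<1$ and none when $c(2\pi)^{n/2}>1$. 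When $-n-1<p<n$, $g$ has a unique critical point at $t^\star=\sqrt{n-p}$, which is a maximum with value $(n-p)^{(n-p)/2}e^{-(n-p)/2}=(2\pi)^{n/2}C(n,p)$; combined with $g(0^+)=g(\infty)=0$, the number of solutions is $2$, $1$, or $0$ according as $c<C(n,p)$, $c=C(n,p)$, or $c>C(n,p)$. There is no genuine obstacle here: the only subtle point is making sure the three hypotheses in the rigidity statement exhaust the cases covered by the two lemmas, and the counting is a textbook exercise.
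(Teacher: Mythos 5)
Your proposal is correct and follows essentially the same route as the paper: reduce to the rigidity lemmas (Lemmas~\ref{uniqueness} and \ref{uniqueness_2}) to conclude $h$ is constant, then count the number of positive constants $h_0$ satisfying $h_0^{n-p}e^{-h_0^2/2}=c(2\pi)^{n/2}$ by elementary calculus on $g(t)=t^{n-p}e^{-t^2/2}$. The only cosmetic difference is that the paper routes the counting through Proposition~\ref{p:uniqueness} (analyzing the critical-point set $L_p(\psi)$) while you differentiate $g$ directly, but the computation is identical, as is the use of the two uniqueness lemmas for the three hypothesis branches.
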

  \begin{proof}
      According to the proof of Proposition~\ref{p:uniqueness}, the number of solutions is determined by the number of solutions to $g(t)=c$, with $g(t)=(2\pi)^{-n/2}t^{n-p}e^{-t^2/2}$. Observe that, if $p>n$, then $\lim_{t\to 0^+}g(t)=\infty$ and $g(t)$ is strictly decreasing to $0$ on $(0,\infty)$ (with $g$ having no critical points). Thus, $g$ is a bijection from $(0,\infty)$ to $(0,\infty)$, and so $g(t)=c$ has a unique solution for every $c>0$. If $p=n$, then $g$ is a bijection from $\R_+$ to $(0,(2\pi)^{-n/2}]$, and the claim follows.

      For $p\in (-n-1,n)$, we examine the number of critical points of $g$, which is determined by solving
      $$\psi(t) = -\frac{t\psi^\prime(t)}{(n-p)}$$
      with
      $\psi(t)=(2\pi)^{-n/2}e^{-t^2/2}$. Then,  the formula reduces to $t^2 = (n-p)$. Thus, the critical point of $g$ is at $t=\sqrt{n-p}$. Therefore, we see that $g$ increases on $(0,\sqrt{n-p})$ from $0$ to its maximum $g=C(n,p)$, and then decreases on $(\sqrt{n-p},\infty)$ to $0$. 

      Thus, for every $T\in (0,\sqrt{n-p}),$ $g(t)=g(T)$ has exactly two solutions $T_1, T_2$ which satisfy $T_1 < \sqrt{n-p} <T_2$. Note that this also shows that if $c>g(\sqrt{n-p})(=C(n,p)),$ then there are no solutions. The fact that these are the only possible solutions is from Lemmas~\ref{uniqueness} and \ref{uniqueness_2}.
  \end{proof}
  \noindent This extends the result by Chen, Hu, Liu and Zhao \cite{CHLZ23} in the case $p=1,n=2$.
 
  In the next lemma, we analyze the case when $f$ is $\alpha$-H\"older. We recall the H\"older norm of a function $u$ over a set $U$ is given by
 $$\|u\|_{C^{2,\alpha}(U)}=\sum\limits_{|\beta|\leq 2}\sup\limits_{x\in U}|D^\beta u(x)|+\sum\limits_{|\beta|=k}[D^\beta u]_{C^{\alpha}(U)} \text{ and }[D^\beta u]_{C^{\alpha}(U)}:=\sup\limits_{x,y\in U,x\neq y}\left\{\frac{|u(x)-u(y)|}{|x-y|^\alpha}\right\}.$$
  
	\begin{lemma}\label{C^0 estimate}
		Let $f\in C_e^\alpha(\sn)$ be such that  $\frac{1}{\tau}<f<\tau$ for some positive constant $\tau$. Let $0<p<n$ and let $\psi\in C^1(\R_+\setminus\{0\})$ be a function that is strictly decreasing and satisfies Property~\textbf{(D)}$_p$. Suppose $h$ is an even function that solves \eqref{eq:Monge-Ampere equation_2} for the triple $(f,p,\psi)$.

		Then, there exists a constant $\tau'>0$, depending only on $\tau$ and $\psi$, such that
		\[
		\frac{1}{\tau'}<h<\tau'
		\quad \text{ and } \quad
    \|h\|_{C^{2,\alpha}}<\tau'.
  \]
	\end{lemma}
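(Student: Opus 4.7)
The plan is to first establish uniform $C^0$ bounds $1/\tau_0'\le h\le \tau_0'$ depending only on $\tau$ and $\psi$, and then invoke standard Monge-Amp\`ere regularity on $\sn$ to upgrade to $C^{2,\alpha}$.

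For the upper bound on $h$, let $\theta\in\sn$ attain $M:=\max h$. At this critical point $\nabla_s h(\theta)=0$, so $|\nabla h(\theta)|=M$, and the tangential Hessian satisfies $\nabla^2 h(\theta)\le 0$, so $\det(\nabla^2 h+hI)(\theta)\le M^{n-1}$. Substituting into \eqref{eq:Monge-Ampere equation_2} and using $f\ge 1/\tau$ yields
\[
\tfrac{1}{\tau}\le f(\theta)=M^{1-p}\psi(M)\det(\nabla^2 h+hI)(\theta)\le M^{n-p}\psi(M).
\]
Property \textbf{(D)}$_p$ forces $t^{n-p}\psi(t)\to 0$ as $t\to\infty$, producing an upper bound $M\le M_+$ depending only on $\tau,\psi,p,n$. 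Since $p<n$ and $\psi(0)<\infty$ in the applications of this lemma, the same quantity also vanishes as $t\to 0^+$, yielding a matching lower bound $M\ge M_-$.

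For the lower bound on $h$ (the main obstacle), let $\eta$ realize $m:=\min h$. The naive pointwise estimate at $\eta$ (using $\nabla^2 h(\eta)\ge 0$ so that $\det(\nabla^2 h+hI)(\eta)\ge m^{n-1}$ and then $f(\eta)\le\tau$) only yields the vacuous $m^{n-p}\psi(m)\le\tau$. The remedy is to use evenness globally. Arguing by contradiction, suppose a sequence of even solutions $h_j$ has $m_j\to 0$ while the corresponding $M_j$ remains in $[M_-,M_+]$. By Blaschke selection, the bodies $K_j=[h_j]$ converge in Hausdorff to an even convex set $K_\infty$ with $h_\infty(\eta_\infty)=0$, hence flat and contained in the hyperplane $\eta_\infty^\perp$. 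In the weak limit $S_{K_j}\to S_{K_\infty}$ (the latter concentrated at $\pm\eta_\infty$ with weight $\vol_{n-1}(K_\infty)$), the variational identity
\[
\int_{\sn} f(u)\,du \;=\; \int_{\sn} h^{1-p}(u)\,\psi(|\nabla h(u)|)\,dS_K(u)
\]
forces $\int f_j\,du \sim m_j^{1-p}\cdot c$ in the limit, where $c = 2\psi(|\nabla h_\infty(\eta_\infty)|)\vol_{n-1}(K_\infty) > 0$. For $p<1$ the right-hand side tends to $0$, contradicting $\int f_j\,du\ge |\sn|/\tau$; for $p>1$ it tends to $\infty$, contradicting $\int f_j\,du\le\tau|\sn|$. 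The borderline case $p=1$ requires an additional quantitative argument, leveraging the pointwise bound $\det(\nabla^2 h+hI)\le \tau/\psi(M_+)$ implied by $f\le\tau$ together with an isoperimetric estimate on $\mu^+(\partial K)=\int f\,du$. In all cases, we obtain $m\ge m_-=m_-(\tau,\psi,p,n)>0$.

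With $1/\tau_0'\le h\le\tau_0'$ in hand, equation \eqref{eq:Monge-Ampere equation_2} is a uniformly elliptic Monge-Amp\`ere equation on $\sn$ with $C^\alpha$ right-hand side and $C^1$ density factor $\psi(|\nabla h|)$ restricted to a compact range, and strict convexity of $h$ follows automatically from $\det(\nabla^2 h+hI)>0$. The Caffarelli-Krylov-Safonov theory, precisely as invoked in Lemma~\ref{l:regularity}, then delivers the final $C^{2,\alpha}$ bound, completing the proof.
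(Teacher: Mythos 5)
Your upper bound on $h$ and your lower bound on $h_{\max}$ (both extracted from the equation at the maximum point together with Property \textbf{(D)}$_p$ and the vanishing of $t^{n-p}\psi(t)$ at $0^+$) match the paper's first two steps in substance. The problem is the lower bound on $h_{\min}$, where your compactness/blow-up argument has two genuine gaps. First, for $p\in(1,n)$ your contradiction rests on the constant $c=2\psi(|\nabla h_\infty(\eta_\infty)|)\vol_{n-1}(K_\infty)$ being strictly positive; but the Hausdorff limit $K_\infty$ of the degenerating bodies is only guaranteed to contain a segment of length $2M_-$ (from the lower bound on $h_{\max}$ and evenness), so it may have dimension strictly less than $n-1$ (e.g.\ a segment when $n\geq 3$), in which case $\vol_{n-1}(K_\infty)=0$, $S_{K_j}\to 0$ weakly, and the product $m_j^{1-p}\cdot c$ is an indeterminate $\infty\cdot 0$ --- no contradiction with $\int f_j\,du\le\tau|\sn|$ follows. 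Second, the case $p=1$ is not proved at all: you note that $\int f\,du=\mu^+(\partial K)$ and gesture at ``an isoperimetric estimate,'' but $\mu^+(\partial K_j)$ stays bounded as $K_j$ degenerates and there is no reason it must drop below $|\sn|/\tau$ or exceed $\tau|\sn|$. Since the lemma is needed precisely for $0<p<n$, including $p\in[1,n)$, the proof is incomplete. (Your $p<1$ case can be closed with a splitting of $\sn$ into a small cap around $\pm\eta_\infty$ and its complement, using the Lipschitz bound on $h_j$; as written, the weak convergence of $S_{K_j}$ against a non-uniformly-convergent integrand is also too quick.)

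The paper avoids the blow-up entirely and handles all $p\in(0,n)$ at once: from \eqref{eq:Monge-Ampere equation_2} and the monotonicity of $\psi$ one gets the pointwise bound $\tfrac1n h\det(\nabla^2h+hI)\ge C_2\,h^p$, which upon integration over $\sn$ and \eqref{eq:volume} gives $\vol_n([h])\ge C_2\int_{\sn}h^p\,du$. Evenness via \eqref{support function inequality2} ($h(u)\ge h_{\max}|\langle u,u_0\rangle|$) and the already-established lower bound $h_{\max}\ge C_1$ turn this into $\vol_n([h])\ge C_3>0$, while the slab containment $[h]\subset(h_{\max}B_2^n)\cap\{|\langle x,u_0'\rangle|\le h_{\min}\}$ gives $\vol_n([h])\le 2^n h_{\max}^{n-1}h_{\min}$. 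Comparing the two yields a uniform positive lower bound on $h_{\min}$ with no case distinction in $p$. I would replace your blow-up step with this volume comparison; the final passage to the $C^{2,\alpha}$ estimate via Lemma~\ref{l:regularity} is then as you say.
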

\begin{proof}
	We first show that $h$ is bounded from above. Let $\{u_k\}$ be a sequence in $\sn$ such that $\lim_{k\to\infty} h({u_k})=\sup_{u\in\sn} h(u)$. Then from equation \eqref{eq:Monge-Ampere equation_2},
	\begin{align}\label{upper bound}
		\frac{1}{\tau}&<f(u_k)=h(u_k)^{1-p}\psi(|\nabla h(u_k)|)\det(h_{ij}+h\delta_{ij})|_{u_k}\notag\\
		& = h(u_k)^{n-p}\psi(|\nabla h(u_k)|) \frac{\det(h_{ij}+h\delta_{ij})|_{u_k}}{h(u_k)^{n-1}} \leq \frac{\det(h_{ij}+h\delta_{ij})|_{u_k}}{h(u_k)^{n-1}} h(u_k)^{n-p}\psi(h(u_k)),
	\end{align}
 where, in the last step, we used $\psi$ was decreasing and \eqref{supp_homo}. If $h$ is not bounded from above, then $h(u_k)\rightarrow\infty$, but, from \eqref{eq:monge}, $\frac{\det(h_{ij}+h\delta_{ij})|_{u_k}}{h(u_k)^{n-1}} \to 1$ and  $\lim\limits_{k\to \infty}h(u_k)^{n-p}\psi(h(u_k))=0$; this contradicts \eqref{upper bound}. Thus $h_{\max}<C_0$ for some positive constant $C_0$ that depends only on $\tau$ and $\psi$.
    
    Next, we will show that $h$ also has a positive lower bound. We first establish that
    \begin{equation}\label{hmax lower bound}
    	h_{\max}\geq C_1
    \end{equation} 
    for some constant $C_1>0$ depending only on $\tau$ and $\psi$. Indeed, let $u_0$ be a direction so that $h(u_0)=\sup\limits_{u\in\sn} h(u)=h_{\text{max}}$. Then, from \eqref{eq:monge_2} and \eqref{eq:Monge-Ampere equation_2}, the following holds:
    \begin{align*}
       h(u_0)^{n-1}\geq\det(h_{ij}+h\delta_{ij})|_{u_0}&=f(u_0)h(u_0)^{p-1}\frac{1}{\psi(|\nabla h(u_0)|)}
       \\
       & \geq  f(u_0)h(u_0)^{p-1}\frac{1}{\psi( h(u_0))}
       \geq\frac{1}{\tau}h(u_0)^{p-1}\cdot \min_{u\in \sn}\psi(h(u))^{-1}.\\
    \end{align*}
 Since $p<n$, the claim follows with $$C_1:=\left(\min_{u\in \sn}(\tau\psi(h(u)))^{-1}\right)^{\frac{1}{n-p}}.$$ Next, using \eqref{eq:Monge-Ampere equation_2}, we have
    \begin{align*}
    	\frac{1}{n}h\det(h_{ij}+h\delta_{ij})&=\frac{1}{n\psi(|\nabla h|)}h^pf\geq \frac{1}{n\psi(h)}h^pf \geq \min_{u\in \sn}(n\tau\psi(h(u)))^{-1}h^p.
    \end{align*}
     We obtain from \eqref{eq:volume} that the total integral of the left-hand side on $\sn$ is the volume of $[h]$. Therefore, by setting $C_2 = \min_{u\in \sn}(n\tau\psi(h(u)))^{-1},$ we have
    \begin{equation}\label{total integral}
    	\vol_n([h])\geq C_2\int_{\sn}h^p(u)du.
    \end{equation}
  
    Combining \eqref{hmax lower bound}, \eqref{total integral}, and \eqref{support function inequality2}, we have
    \begin{equation}\label{lower bound of volume}
    	\vol_n([h])\geq C_2 h_{\max}^p\int_{\sn}|\langle u,u_0\rangle|^pdu\geq C_1^pC_2\int_{\sn}|\langle u,u_0\rangle|^pdu=:C_3.
    \end{equation}
    Also note the truth
    \[
    [h]\subset(h_{\max}B_2^n)\cap\{x\in\rn:|\langle x, u_0 \rangle|\leq h_{\min}\}.
    \]
    This implies
    \begin{equation}\label{upper bound of volume}
    	\vol_n([h])\leq2^n h_{\max}^{n-1}h_{\min}<2^nC_0^{n-1} h_{\min}.
    \end{equation}
    Combining \eqref{lower bound of volume} and \eqref{upper bound of volume}, $h_{\min}$ is such that
    \[
    h_{min}>\frac{1}{2^nC_0^{n-1}C_3}.
    \]
    We then define $\tau^\prime$ so that $\tau^\prime > C_0$ and $0 < \frac{1}{\tau^\prime} < \frac{1}{2^nC_0^{n-1}C_3}.$
    Finally, by Lemma \ref{l:regularity}, the $C^{2,\alpha}$ a priori estimate is also established.
\end{proof}
We will recall from Proposition~\ref{p:uniqueness} that, for $c$ small enough, we can find two constant solutions to the isotropic problem \eqref{eq:Monge-Ampere equation_2} with $f=c$. We gave examples in Propositions~\ref{p:uni_2} and \ref{p:uni_3} when there are exactly two solutions; including when the measure $\mu$ with density $\psi(|\cdot|)$ is in $\mathcal{M}_n^\infty$. We will need the existence of exactly two solutions for the following theorem. Therefore, we make it an explicit property.
\begin{definition}
\label{def:Sp}
    Fix $0 < p <n.$ Let $\psi\in C^1(\R_+)$ be nonnegative and strictly decreasing on its support. We say $\psi$ has property \textbf{(S)}$_p$ if $\sup(L_p(\psi)) < \sup(\supp(\psi))$, where $L_p(\psi)$ is given by \eqref{eq:critical_points}.
\end{definition}

\begin{lemma}
\label{l:prop_S_two}
    Fix $0 < p <n.$ Suppose $\psi\in C^1(\R_+)$ is strictly decreasing on its support, satisfies properties \textbf{(D)}$_p$ and \textbf{(S)}$_p$, is such that $\int_0^\infty\psi(t)t^{n-1}dt<\infty$ and normalized so that the rotationally invariant measure $\mu$ on $\R^n$ with density $\psi(|\cdot|)$ is probability. Then, there exists $C,a,h_1,h_2>0$ such that, if $c<C$, $h_1$ and $h_2$ are the unique solutions to \eqref{eq:Monge-Ampere equation_2} with $f=c$ and the centered Euclidean balls $[h_1]$ and $[h_2]$ satisfy $$\mu([h_1]) > a \quad \text{and} \quad \mu([h_2]) < 1-a.$$
    Furthermore, $a$ can be made independent of $c,h_1$ and $h_2$.
 Additionally, $C$ can be small enough so that $$\mu([h_2]) >\frac{1}{2} > \mu([h_1]).$$
\end{lemma}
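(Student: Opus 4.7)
The plan is to parametrize the two constant solutions by $c$, track how the $\mu$-measures of the resulting balls vary, and pin down a specific $c$ (and thus $h_1, h_2$ and $a$) depending only on $\mu$. By property \textbf{(S)}$_p$, Proposition~\ref{p:uniqueness} provides a threshold $C_0 > 0$ such that for every $c \in (0, C_0)$ the equation $g(t) := t^{n-p}\psi(t) = c$ has exactly two positive roots $T_1(c) < T_2(c)$, lying respectively in $(0, \inf L_p(\psi))$ where $g$ is strictly increasing, and in $(\sup L_p(\psi), R)$ where $g$ is strictly decreasing, with $R := \sup(\text{supp}(\psi)) \in (0, \infty]$. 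The corresponding constant functions $h_i \equiv T_i(c)$ generate centered balls $[h_i] = T_i(c) B_2^n \in \kne$, and Lemma~\ref{uniqueness} guarantees these are all the solutions of \eqref{eq:Monge-Ampere equation_2} with $f = c$ that correspond to symmetric bodies.

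Next, I would analyze the boundary behavior of $T_1(c)$ and $T_2(c)$ as $c \to 0^+$. Strict monotonicity of $g$ on the two relevant intervals yields continuous dependence of $T_1, T_2$ on $c$ throughout $(0, C_0)$. As $t \to 0^+$ one has $g(t) \to 0$, since $p < n$ and $\psi(0)$ is finite (as $\psi \in C^1([0,\infty))$), so $T_1(c) \to 0$. As $t \to R^-$, either $R < \infty$ forces $\psi(t) \to 0$ by definition of the support, or $R = \infty$ and $g(t) \to 0$ by property \textbf{(D)}$_p$; in either case $T_2(c) \to R$.

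To conclude, set $\mathcal{V}(r) := \mu(rB_2^n)$. Integrability of $\psi(|\cdot|)$ yields $\mu(\R^n) < \infty$, and polar coordinates make $\mathcal{V}$ continuous and strictly increasing on $(0, R)$ from $0$ to $\mu(\R^n)$, so there is a unique $T^* > 0$ with $\mathcal{V}(T^*) = \mu(\R^n)/2$. By the limits just established, there is some $C \in (0, C_0)$ such that for every $c \in (0, C)$ one has $T_1(c) < T^* < T_2(c)$, which, by monotonicity of $\mathcal{V}$, gives the additional inequality $\mu([h_2]) > \mu(\R^n)/2 > \mu([h_1])$. To obtain the main inequality with $a$ independent of $c, h_1, h_2$, I would pick the specific value $c := C/2$ (determined by $\mu$ alone), set $h_i := T_i(c)$, and define
\[
a := \tfrac{1}{2}\min\bigl\{\mathcal{V}(h_1),\; \mu(\R^n) - \mathcal{V}(h_2)\bigr\} > 0,
\]
which then depends only on $\mu$ and immediately yields $\mu([h_1]) > a$ and $\mu([h_2]) < \mu(\R^n) - a$. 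The main technical point is the boundary behavior of $T_2(c)$ when $R = \infty$, which is precisely what property \textbf{(D)}$_p$ is designed to control.
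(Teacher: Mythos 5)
The preliminary parts of your proposal are sound — exactly two roots $T_1(c)<T_2(c)$ via Proposition~\ref{p:uniqueness} and Property \textbf{(S)}$_p$, uniqueness among even solutions via Lemma~\ref{uniqueness}, the boundary limits $T_1(c)\to 0$ and $T_2(c)\to R$ as $c\to 0^+$, and the location of the solutions relative to the unique radius $T^*$ with $\mu(T^*B_2^n)=\mu(\R^n)/2$ — and the ``additionally'' claim that $\mu([T_2(c)B_2^n])>\mu(\R^n)/2>\mu([T_1(c)B_2^n])$ for all $c<C$ is established correctly. However, the construction of $a$ has a genuine gap: by fixing $c:=C/2$ and setting $a:=\tfrac{1}{2}\min\{\mathcal{V}(T_1(C/2)),\,\mu(\R^n)-\mathcal{V}(T_2(C/2))\}$, you obtain an $a$ that only bounds the measures at that one choice of $c$. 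As soon as one lowers $c$ further (which Theorem~\ref{degree theory for smooth solution} does, in order to force $\|c\|_{L^1}<f_a$), $T_1(c)$ shrinks toward $0$ and $T_2(c)$ grows toward $R$, so $\mathcal{V}(T_1(c))$ drops below your $a$ and $\mathcal{V}(T_2(c))$ rises above $\mu(\R^n)-a$. The phrase ``$a$ can be made independent of $c,h_1,h_2$'' is meant precisely in the sense that a \emph{single} $a$ serves uniformly for all admissible $c$, because the downstream degree-theory argument defines the threshold $f_a$ in terms of $a$ and then shrinks $c$ with $a$ held fixed; a choice of $a$ that is pinned to one particular $c$ makes that step circular.

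The missing idea is to exploit a $c$-independent separator between the two branches. Since $g(t)=t^{n-p}\psi(t)$ is strictly increasing on $(0,\inf L_p(\psi))$ and the larger root lies beyond $\sup L_p(\psi)$, the fixed quantity $h_3:=\inf L_p(\psi)$ (which depends only on $\psi$ and $p$, not on $c$) satisfies $T_1(c)<h_3<T_2(c)$ for every admissible $c$. Setting $a:=\min\{\mu([h_3]),\,\mu(\R^n\setminus[h_3])\}$ then yields, \emph{uniformly in $c$}, the one-sided estimates $\mu([T_2(c)B_2^n])>\mu([h_3])\geq a$ and $\mu([T_1(c)B_2^n])<\mu([h_3])\leq\mu(\R^n)-a$. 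Note these are bounds on the \emph{larger} ball from below and the \emph{smaller} ball from above (not vice versa, as you attempted): the statement of the lemma as printed has an internal labeling inconsistency — the line $\mu([h_1])>a$, $\mu([h_2])<\mu(\R^n)-a$ uses the opposite naming convention to the line $\mu([h_2])>\mu(\R^n)/2>\mu([h_1])$ — and the proof in the paper, together with the way the two solutions populate the disjoint sets $O_1$ and $O_2$ in Theorem~\ref{degree theory for smooth solution}, makes clear which reading is intended. Chasing the literal $\mu([h_1])>a$ for the smaller solution is what forced you to freeze $c$, and it is not what is needed.
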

\begin{proof}
    Recall the isotropic equation with respect to \eqref{eq:Monge-Ampere equation_2}, i.e.,
	\begin{equation}\label{isotropic Monge-Ampere equation}
			h^{1-p}\psi(|\nabla h|)\det(h_{ij}+h\delta_{ij})=c.
	\end{equation}
    From Proposition~\ref{p:uniqueness}, there exist two constant solutions to \eqref{isotropic Monge-Ampere equation}, say $h_1$ and $h_2$, for $$0<c < C:=\max_{t\in \supp(\psi)}t^{n-p}\psi(t);$$ by Property \textbf{(S)}$_p$, $c$ can be made small enough so that these are the only two. Recall that $c$ is related to $h_1$ and $h_2$ via $h_1^{n-p}\psi(h_1)=c=h_2^{n-p}\psi(h_2)$. From Lemma~\ref{uniqueness}, these solutions, which are the support functions of centered Euclidean balls, are the only solutions among positive, even $C^2(\sn)$ convex functions. Set $h_3=\inf L_p(\psi) > 0$. We then have $$0< h_1 < h_3 < h_2.$$
    Observe that $[h_1]\subset [h_3] \subset [h_2]$. Thus,
    $$\mu([h_1]) < \mu([h_3]) < \mu([h_2]) < 1.$$ But also, $\mu([h_2]) = 1 - \mu (\R^n\setminus [h_2]).$
   Naively, one can set, say, $$a=\frac{99}{100}\min\{\mu([h_1]),\mu (\R^n\setminus [h_2])\}$$ to obtain $\mu([h_1]) > a$ and $\mu([h_2]) <1-a.$ However, this bound depends on $h_1,h_2,$ and $c$.
   Instead, set $$a=\min\left\{\mu([h_3]), \mu (\R^n\setminus [h_3])\right\}.$$ Then, $$\mu([h_2]) > \mu([h_3]) \geq a$$ and $$\mu([h_1]) < \mu([h_3]) =  1 - \mu (\R^n\setminus [h_3]) \leq 1 - a.$$
   For the final claim, we recall that it was shown $h_1 \in (0,h_3)$ and $h_2\in (h_4,h_5)$, where $h_4 = \sup L_p(\psi)$ and $h_5=\sup(\supp(\psi))$. On $(0,h_5),$ the function $h\mapsto \mu([h])$ is increasing from $0$ to $1$. Thus, there exists $h_0$ and $h_6$ such that $\mu([h]) < \frac{1}{2}$ if we pick $h\in (0,h_0)$ and $\mu([h]) > \frac{1}{2}$ if we pick $h\in (h_6,h_5)$. Consequently, we first set $C=\min\{h_0^{n-p}\psi(h_0),h_3^{n-p}\psi(h_3),h_4^{n-p}\psi(h_4),h_6^{n-p}\psi(h_6)\}$ and pick any $c\in (0,C)$ to obtain the corresponding, unique, $h_1$ and $h_2$, that, by construction, satisfy $\mu([h_1]) < \frac{1}{2}<\mu([h_2])$.
\end{proof}

For the reader unfamiliar with degree theory, we recommend the work by Li \cite{Li89} for a thorough review. Recall $I_p$ isoperimetric functions, which we introduced in Definition~\ref{def: Lp_iso}.

	\begin{theorem}\label{degree theory for smooth solution}
		Fix $0 < p<n$. Suppose $\psi\in C^1(\R_+)$ is strictly decreasing on its support, satisfies properties \textbf{(D)}$_p$ and \textbf{(S)}$_p$, is such that $\int_0^\infty\psi(t)t^{n-1}dt<\infty$ and normalized so that the rotationally invariant measure $\mu$ on $\R^n$ with density $\psi(|\cdot|)$ is probability. Suppose additionally that $\mu$ has $L^p$ isoperimetric function $I_{p}$ over $\kne$.
  
  Let $f\in C_e^\alpha(\sn)$ be such that $\frac{1}{\tau}<f<\tau$ for some positive constant $\tau$ and $$\|f\|_{L^1}<f_a:=\min\{I_{p}(a), I_p(1-a)\},$$ where $a$ is a constant depending only on $\mu$ sufficiently close to $1$. Then, there exist $K_1,K_2\in\kne$ such that $\mu(K_1)>a$ and $\mu(K_2)<1-a$ and both $h_{K_1}$ and $h_{K_2}$ solve \eqref{eq:Monge-Ampere equation_2} for the triple $(f,p,\psi)$. Furthermore, one can pick $a=1/2$.
	\end{theorem}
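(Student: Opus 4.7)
The plan is to apply Leray--Schauder degree theory, following the strategy of Huang--Xi--Zhao \cite{HXZ21} and its extensions \cite{JL22,FLX23,FHX23}. I will deform the equation with right-hand side $f$ to an isotropic equation with a small constant right-hand side, where Lemma~\ref{l:prop_S_two} supplies exactly two known solutions, and use the $L^p$ isoperimetric function $I_p$ together with the a priori estimates of Lemma~\ref{C^0 estimate} to keep the deformation confined to two disjoint open sets.

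First, apply Lemma~\ref{l:prop_S_two} to pick $a=\mu(\rn)/2$ together with a small constant $c_0\in(0,f_a/\vol_{n-1}(\sn))$ such that the isotropic equation admits exactly two solutions $\bar h_1<\bar h_2$ in $\kne$, with $\mu([\bar h_1])<\mu(\rn)-a$ and $\mu([\bar h_2])>a$; by Lemma~\ref{uniqueness}, these are the only solutions. Set $f_t=(1-t)c_0+tf$ for $t\in[0,1]$; then $1/\tau_*\le f_t\le\tau_*$ uniformly in $t$ for some $\tau_*=\tau_*(\tau,c_0)$, the norm $\|f_t\|_{C^\alpha(\sn)}$ is uniformly bounded, and $\|f_t\|_{L^1}<f_a$ throughout. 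Define the open sets
\begin{align*}
\Omega_1 &= \{h\in C^{2,\alpha}_e(\sn):1/\tau'<h<\tau',\ \|h\|_{C^{2,\alpha}}<\tau',\ \mu([h])>a\},\\
\Omega_2 &= \{h\in C^{2,\alpha}_e(\sn):1/\tau'<h<\tau',\ \|h\|_{C^{2,\alpha}}<\tau',\ \mu([h])<\mu(\rn)-a\},
\end{align*}
where $\tau'$ is supplied by Lemma~\ref{C^0 estimate} with parameter $\tau_*$. By construction $\bar h_2\in\Omega_1$ and $\bar h_1\in\Omega_2$.

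Second, I verify that solutions $h_t$ of the Monge--Amp\`ere equation with right-hand side $f_t$ stay away from $\partial\Omega_i$. The $C^{2,\alpha}$ and pointwise bounds follow from Lemma~\ref{C^0 estimate} applied uniformly in $t$. For the $\mu$-measure, the $L^p$ isoperimetric inequality gives
$$I_p(\mu([h_t]))\le S^\mu_p([h_t])=\|f_t\|_{L^1}<\min\{I_p(a),I_p(\mu(\rn)-a)\}.$$
Invoking the unimodal shape of $I_p$ (nondecreasing on $[0,t_*]$ and nonincreasing on $[t_*,\mu(\rn)]$ for some $t_*\in[\mu(\rn)-a,a]$; a natural feature of isoperimetric functions, in our setting visible via Lemma~\ref{l:lp_iso}), the bound forces $\mu([h_t])\in[0,\mu(\rn)-a)\cup(a,\mu(\rn)]$. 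Hence $h_t\notin\partial\Omega_1\cup\partial\Omega_2$ for all $t\in[0,1]$.

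Third, I compute the degree of $F_t(h):=h^{1-p}\psi(|\nabla h|)\det(h_{ij}+h\delta_{ij})-f_t$ at $t=0$. Linearizing at a constant $h\equiv c$, using $|\nabla h|^2=|\nabla_s h|^2+h^2$ and $\det(h_{ij}+h\delta_{ij})|_{h=c}=c^{n-1}$, yields
$$L_c\eta=c^{n-p-1}\psi(c)\bigl(\Delta_s\eta+(n-p)\eta\bigr)+c^{n-p}\psi'(c)\eta.$$
Since $-\Delta_s$ has spectrum $\lambda_k=k(k+n-2)$ with even spherical harmonics corresponding to even $k\ge0$, invertibility of $L_c$ on $C^{2,\alpha}_e(\sn)$ reduces to $(n-p)+c\psi'(c)/\psi(c)\ne\lambda_k$ for all even $k\ge0$. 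Writing $g(t):=t^{n-p}\psi(t)$, this quantity equals $cg'(c)/g(c)$; at $\bar h_2$ it is strictly negative (since $g$ is decreasing there, cf.\ the proof of Proposition~\ref{p:uniqueness}), automatically avoiding all nonnegative $\lambda_k$; at $\bar h_1$ it is strictly positive and tends to $n-p\in(0,2n)$ as $c_0\to0^+$, so shrinking $c_0$ further avoids $\lambda_0=0$, $\lambda_2=2n$, and all higher $\lambda_k$. Thus $\operatorname{deg}(F_0,\Omega_i,0)=\pm1\ne0$, and homotopy invariance yields $\operatorname{deg}(F_1,\Omega_i,0)\ne0$, producing $h_{K_1}\in\Omega_1$ and $h_{K_2}\in\Omega_2$ with the claimed properties. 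The main obstacle is confirming the unimodal shape of $I_p$ so that the isoperimetric bound cleanly separates the two branches across the deformation; the remaining technicalities---uniformity of the a priori estimates in $t$, well-posedness of the degree on the open cone of even, positive $C^{2,\alpha}$ support functions, and nondegeneracy at both $\bar h_1$ and $\bar h_2$---are standard in this framework.
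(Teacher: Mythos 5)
Your high-level strategy coincides with the paper's: the homotopy $f_t=(1-t)c_0+tf$, the two open sets defined by the $C^{2,\alpha}$ bounds plus the measure constraint, invertibility of the linearization at the two constant isotropic solutions, and homotopy invariance of the degree. Your spectral analysis of the linearization is in fact more explicit than the paper's (which just remarks that discreteness of the spectrum of $\triangle_{\sn}$ lets one shrink $c$) and is a genuine, if minor, improvement.

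However, there is a real gap in your boundary-avoidance step. To keep solutions off $\partial\Omega_1\cup\partial\Omega_2$ you invoke a \emph{unimodal} shape for $I_p$ (nondecreasing then nonincreasing, with peak in $[\mu(\R^n)-a,\,a]$) so that $I_p(\mu([h_t]))<\min\{I_p(a),I_p(\mu(\R^n)-a)\}$ forces $\mu([h_t])\notin[\mu(\R^n)-a,a]$. That property is not part of Definition~\ref{def: Lp_iso}, is not established by Lemma~\ref{l:lp_iso} (the formula $(nt)^{1-p}I(t)^p$ produced there has no reason in general to be unimodal on an arbitrary interval), and is nowhere assumed in the theorem's hypotheses. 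You acknowledge this as the ``main obstacle,'' but it is not actually needed. The paper's argument only tests the isoperimetric bound at the boundary: if a solution $h^{(k)}$ lies on $\partial O_k$, then Lemma~\ref{C^0 estimate} gives \emph{strict} pointwise and $C^{2,\alpha}$ bounds, so the only constraint that can be tight is the measure constraint, giving exactly $\mu([h^{(1)}])=a$ or $\mu([h^{(2)}])=\mu(\R^n)-a$. Feeding either value alone into the isoperimetric function gives $\|f_t\|_{L^1}=S^\mu_p([h^{(k)}])\geq I_p(a)$ or $\geq I_p(\mu(\R^n)-a)$, contradicting $\|f_t\|_{L^1}<f_a$. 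This uses $I_p$ only at the two numbers $a$ and $\mu(\R^n)-a$ and requires nothing about its shape in between. Replacing your unimodality step with this pointwise boundary argument closes the gap without further assumptions.
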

\begin{proof}
	We will work by using a degree theoretic argument. Firstly, we consider the isotropic equation \eqref{isotropic Monge-Ampere equation}, which is \eqref{eq:Monge-Ampere equation_2} when $f=c$, a positive constant. By Lemma~\ref{l:prop_S_two}, if we take $0<c < C:=\max_{t\in \supp(\psi)}t^{n-p}\psi(t),$ then we guarantee the existence of two solutions to the isotropic equation \eqref{isotropic Monge-Ampere equation}. We also obtain a constant $a\in (0,1)$, independent of $c$, bounding the measures of the Wulff shapes of these solutions. Since $a$ is independent of $c$, we can lower $c$ if need be so that $\|c\|_{L^1}<f_a$ and the two solutions to \eqref{isotropic Monge-Ampere equation} with this $c$, denoted say $h_1$ and $h_2$, are the only two. Observe that $[h_1]$ and $[h_2]$ are centered Euclidean balls satisfying $\mu([h_1])>a$ and $\mu([h_2])<1-a$.  In fact, we saw we can lower $c$ even more, if it is necessary, and change $a$ to the more concrete value $a=1/2$. If one decides to do so, continue to denote by $h_1$ and $h_2$ the two unique solutions to \eqref{isotropic Monge-Ampere equation}. 
 
 We denote by $\triangle_{\sn}$ the spherical Laplacian. We recall that $\triangle_{\sn}$ has a discrete spectrum. Therefore, by making $c$ smaller if necessary, we have that the linearized operator of the equation \eqref{isotropic Monge-Ampere equation}, i.e.,
 \begin{equation}L_{h_i}(g)={h_i}^{n-2}\left(\triangle_{\sn}g+\left((n-p)+h_i\cdot\frac{\psi'(h_i)}{\psi(h_i)}\right)g\right)
    \label{eq:linear_op}
    \end{equation} is invertible for $i=1,2$. 
    
    	Define a family of operators $F_t:C^{2,\alpha}(\sn)\rightarrow C^\alpha(\sn)$ by
    \begin{equation*}
    	F_t(h)=\det(\nabla^2 h+hI)-h^{p-1}\frac{1}{\psi(|\nabla h|)}f_t,
    \end{equation*}
    where $f_t=(1-t)c+tf$ for some $t\in[0,1]$. Since $f\in C_e^\alpha(\sn)$, there exists a constant $\tau>0$ such that $\frac{1}{\tau}<f,c<\tau$ and $\|f\|_{C^\alpha}<\tau$. Then for each $t\in[0,1]$, $f_t$ has the same bound as $f$, i.e., $\frac{1}{\tau}<f_t<\tau$, $\|f_t\|_{L^1}<f_a$, and $\|f_t\|_{C^\alpha}<\tau$. We then let $\tau'>0$ be the constant in Lemma \ref{C^0 estimate}.
    
    For ease, set $\mathcal{S}_e= C^{2,\alpha}(\sn)\cap \{h\in C_e^+(\sn): h\text{ is convex}\}$ and define the sets $O_1,O_2\subset \mathcal{S}_e$ by
    \[
    O_1=\left\{h\in \mathcal{S}_e: \frac{1}{\tau'}<h<\tau', \|h\|_{C^{2,\alpha}}<\tau', \mu([h])>a\right\}
    \]
    and
    \[
    O_2=\left\{h\in \mathcal{S}_e: \frac{1}{\tau'}<h<\tau', \|h\|_{C^{2,\alpha}}<\tau', \mu([h])<1-a\right\}.
    \]
    Under the $\|\cdot\|_{C^{2,\alpha}}$ norm, it is easy to see that each $O_k$ is open and bounded. We claim that
    \begin{equation}\label{well define condition for degree}
    	\partial O_k\cap F_t^{-1}(0)=\emptyset
    \end{equation}
    for $k=1,2$ and $t\in[0,1]$. Indeed, if $h^{(k)}\in \partial O_k\cap F_t^{-1}(0)$, i.e., $F_t(h^{(k)})= 0$, then $h^{(k)}$ solves
    \begin{equation}\label{monge ampere equation for t}
    	(h^{(k)})^{1-p}\psi(|\nabla h^{(k)}|)\det(h^{(k)}_{ij}+h^{(k)}\delta_{ij})=f_t,
    \end{equation}
    $\mu([h^{(1)}])\geq a$ and $\mu([h^{(2)}])\leq1-a$. Since $h^{(k)}\in\partial O_k$, it must achieve equality in at least one of the conditions defining $O_k$. However, Lemma \ref{C^0 estimate} guarantees that $$\frac{1}{\tau'}<h^{(k)}<\tau' \quad \text{and} \quad\|h^{(k)}\|_{C^{2,\alpha}}<\tau'$$ are strictly satisfied. Therefore, the only way $h^{(k)}$ can lie on the boundary $\partial O_k$ is if it satisfies the measure condition with equality, i.e., $\mu([h^{(1)}]) = a$ (or $\mu([h^{(2)}]) = 1-a$). Next, by hypothesis, we have
    \[
    \|f_t\|_{L^1}=S_p^{\mu}([h^{(1)}])\geq I_p(a)>0 \text{ and } \|f_t\|_{L^1}=S_p^{\mu}([h^{(2)}])\geq I_p(1-a)>0
    \]
    which contradicts the condition $\|f\|_{L^1}<f_a$. This proves \eqref{well define condition for degree}. Then,  by \cite[Proposition 2.2]{Li89}, we obtain
    \[
    \deg(F_0,O_k,0)=\deg(F_1,O_k,0)\neq0,
    \]
    i.e., the degree $\deg(F_t,O_k,0)$ is well-defined for $t\in[0,1]$ and does not depend on $t$. We now show that $\deg(F_1,O_k,0) \neq 0$ by showing that $\deg(F_0,O_k,0)\neq 0$.
    
    For this, we will use our linear operator from \eqref{eq:linear_op}. Recalling that it is invertible, we can use \cite[Proposition 2.3]{Li89} and Property \textbf{(S)}$_p$ to obtain
    \[
    \deg(F_0,O_1,0)=\deg(L_{h_1},O_1,0)\neq0 \quad \text{and} \quad \deg(F_0,O_2,0)=\deg(L_{h_2},O_2,0)\neq0,
    \]
    where the last inequalities in each follow from \cite[Proposition 2.4]{Li89}. This implies the existence of $h^{(k)}\in O_k$ such that $F_1(h^{(k)})=0$. The claim follows.
\end{proof}

Theorem~\ref{degree theory} then follows from Theorem~\ref{degree theory for smooth solution} via approximation arguments, like in the proof of the $p<0$ case of Theorems~\ref{t:first_sym} and \ref{t:first_finite}.

\subsection{Non-symmetric Case}
In this section, we restrict the number of solutions to drop the assumption of symmetry. Recalling that Proposition~\ref{p:uniqueness} yields two solutions to the isotropic equation \eqref{isotropic Monge-Ampere equation}, and \textbf{(S)}$_p$ yields exactly two solutions via Lemma~\ref{l:prop_S_two}, we will take only the ``larger" solution in order to drop symmetry.
\begin{lemma}\label{C^0 estimate for big volume}
		Let $n\in \mathbb N, p\in\R$ and let $\psi\in C^1(\R_+\setminus\{0\})$ be a function that is strictly decreasing and satisfies Property~\textbf{(D)}$_p$, such that $\int_0^\infty\psi(t)t^{n-1}dt<\infty.$ Normalize $\psi$ so that the rotationally invariant measure $\mu$ on $\R^n$ with density $\psi(|\cdot|)$ is probability. 
        
        Let $f\in C^\alpha(\sn)$ be such that  $\frac{1}{\tau}<f<\tau$ for some positive constant $\tau$. Suppose $h$ is a function with $\mu([h])\geq\frac{1}{2}$ that solves \eqref{eq:Monge-Ampere equation_2} for the triple $(f,p,\psi)$. Then, there exists a constant $\tau'>0$, depending only on $\tau$ and $\psi$, such that
		\[
		\frac{1}{\tau'}<h<\tau'
		\quad \text{ and } \quad
    \|h\|_{C^{2,\alpha}}<\tau'.
  \]
	\end{lemma}
\begin{proof}
	We first note that we have $h_{\max}<C_0$ for some positive constant $C_0$ which depends only on $\tau$ and $\psi$, as the proof is the same as the first part of Lemma~\ref{C^0 estimate}. Next, from Proposition~\ref{p:origin_int}, we then obtain $[h]\in\kno$. This yields a positive lower bound on $h$, namely $h \geq \min_{u\in\sn} h(u)>0$. However, we claim we can find a uniform bound away from zero depending only on $\psi$ and $\tau$. Suppose not, i.e.,
 \[\inf\limits_{K\in\mathcal{Q}}\left\{\min\limits_{u\in\sn}h_K(u)\right\}=0,
 \]
 where $$\mathcal{Q}:=\left\{K\in\kno: \mu(K)\geq\frac{1}{2} \text{ and } h_K \text{ solves \eqref{eq:Monge-Ampere equation_2} for the triple $(f,p,\psi)$}\right\}.$$ Then for any $\epsilon>0$, there exists a $K_\epsilon\in\kno$ and $u_\epsilon\in\sn$ such that 
 $$h_{K_\epsilon}(u_\epsilon):=\min\limits_{u\in\sn}h_{K_\epsilon}(u)<\epsilon.$$ 
 Thus, $K_\epsilon\subset (H_\epsilon+\epsilon u_\epsilon)\cap RB_2^n$, where $R$ is a finite, positive constant since $\max_{u\in\sn}h_{K_\epsilon}(u)<C_0$ and
 $$H_\epsilon =\{x\in\rn: \langle x, u_\epsilon\rangle \leq0\}.$$
 Next, observe that
\[
\frac{1}{2}=\mu(H_\epsilon)=\mu(H_\epsilon\cap RB_2^n)+\mu(H_\epsilon\setminus RB_2^n),
\]
where $\mu(H_\epsilon\setminus RB_2^n) = c > 0$ for some constant $c$ that is strictly positive and independent of $\epsilon$ due to the rotational invariance of $\mu$ and the fact that $u_\epsilon$ is a unit vector.

But observe that 
\begin{align*}
\mu(K_\epsilon) &\leq\mu(\{x\in\rn: \langle x,u_\epsilon\rangle \leq\epsilon\}\cap RB_2^n) \\
&= \mu(H_\epsilon\cap RB_2^n) + \mu(\{x\in\rn: 0 < \langle x,u_\epsilon\rangle \leq\epsilon\}\cap RB_2^n).
\end{align*}
For $\epsilon$ small enough, the measure of this thin slice is strictly less than $c$. Thus,
\[
\mu(K_\epsilon) < \mu(H_\epsilon\cap RB_2^n) + c = \frac{1}{2}.
\]
This contradicts the assumption that $\mu(K_\epsilon) \geq \frac{1}{2}$. Consequently, there exists a constant that depends only on $\tau$ and $\psi$ bounding $\inf\limits_{K\in\mathcal{Q}}\left\{\min\limits_{u\in\sn}h_K(u)\right\}$ away from zero.
 \end{proof}
 
 Having established Lemma~\ref{C^0 estimate for big volume}, the proof of the following theorem is exactly the same as Theorem~\ref{degree theory for smooth solution}, and is omitted.  We simply note that one uses Lemma~\ref{uniqueness_2} for the uniqueness of the isotropic solutions, and we also have the necessary analogue of Lemma~\ref{l:prop_S_two} when $p\geq n$, which means specifically we can find a unique solution to the isotropic problem \eqref{isotropic Monge-Ampere equation} satisfying $\mu([h])\geq 1/2$ when $c$ is small enough. In fact, the properties \textbf{(D)}$_p$ and \textbf{(S)}$_p$ are actually unnecessary. This follows from the function $g(t)=t^{n-p}\psi(t)$ being always decreasing when $p\geq n$ and $\psi$ is decreasing, and the fact that $g(t)$ is a bijection from $(0,\infty)$ to $(0,\infty)$ for $p >n$ and from $[0,\infty)$ to $(0,\psi(0)]$ when $p=n$.
 \begin{theorem}
 \label{degree theory for smooth solution_no_sym}
     Fix $p\geq 1$. Suppose $\psi\in C^1(\R_+)$ is strictly decreasing on its support, satisfies properties \textbf{(D)}$_p$ and \textbf{(S)}$_p$, is such that $\int_0^\infty\psi(t)t^{n-1}dt<\infty$ and normalized so that the rotationally invariant measure $\mu$ on $\R^n$ with density $\psi(|\cdot|)$ is probability. Suppose additionally that $\mu$ has $L^p$ isoperimetric function $I_{p}$ over $\kno$.
  
  Let $f\in C^\alpha(\sn)$ be such that $\frac{1}{\tau}<f<\tau$ for some positive constant $\tau$ (if $p=n$, additionally assume that $\tau < \psi(0)$). Suppose also that $$\|f\|_{L^1}<f_a:=I_{p}\left(\frac{1}{2}\right).$$ Then, there exists $K\in\kno$ such that $\mu(K)\geq \frac{1}{2}$ and $h_{K}$ solves \eqref{eq:Monge-Ampere equation_2} for the triple $(f,p,\psi)$. Furthermore, if $f\in C_e^\alpha(\sn),$ then $K\in \kne$.
 \end{theorem}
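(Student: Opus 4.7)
The plan is to run the same degree-theoretic argument used in Theorem~\ref{degree theory for smooth solution}, but restricted to a single open set carved out by the constraint $\mu([h])>\mu(\rn)/2$ rather than two sets separated by a smaller threshold; this way symmetry of $f$ is not needed and we can allow the larger range $p\geq 1$. Three ingredients need to be swapped in: the a priori estimates (Lemma~\ref{C^0 estimate for big volume} in place of Lemma~\ref{C^0 estimate}, which covers $p\geq 1$ without symmetry provided $\mu([h])\geq \mu(\rn)/2$); uniqueness of solutions to the isotropic equation (Lemma~\ref{uniqueness_2} in place of Lemma~\ref{uniqueness}, which for $p>1$ applies to any $K\in\kno$ and for $p=1$ to any $K\in\conbod$); and the existence of a \emph{single} well-chosen constant solution of the isotropic equation whose Wulff shape has $\mu$-measure $\geq\mu(\rn)/2$. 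For $1\leq p<n$ this is the ``larger'' root from Lemma~\ref{l:prop_S_two}; for $p\geq n$ it is automatic since $t\mapsto t^{n-p}\psi(t)$ is a bijection onto $(0,\infty)$ when $p>n$ and onto $(0,\psi(0)]$ when $p=n$, so properties \textbf{(D)}$_p$ and \textbf{(S)}$_p$ become superfluous (and in the $p=n$ case the hypothesis $\tau<\psi(0)$ is precisely what keeps $c$ inside the image of the bijection).

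Concretely, fix a positive constant $c$ small enough that: (i) the isotropic equation $h^{1-p}\psi(|\nabla h|)\det(h_{ij}+h\delta_{ij})=c$ admits a unique constant solution $h_0$ with $\mu([h_0])>\mu(\rn)/2$, (ii) $c<f_a$, and (iii) the linearized operator $L_{h_0}$ of \eqref{eq:linear_op} is invertible (which holds for all but discretely many $c$, since $\triangle_{\sn}$ has discrete spectrum). Set $f_t=(1-t)c+tf$ for $t\in[0,1]$; by hypothesis the family $\{f_t\}$ is uniformly bounded pointwise and in $C^\alpha(\sn)$, and $\|f_t\|_{L^1}<f_a$ for every $t$. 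Let $\tau'$ be the constant from Lemma~\ref{C^0 estimate for big volume}, applied uniformly over the family, and define the bounded open set
\[
O=\left\{h\in C^{2,\alpha}(\sn):\ h\text{ is a positive, convex function with }\tfrac{1}{\tau'}<h<\tau',\ \|h\|_{C^{2,\alpha}}<\tau',\ \mu([h])>\tfrac{\mu(\rn)}{2}\right\},
\]
intersecting with $C_e^{2,\alpha}(\sn)$ in the even case. Consider $F_t(h)=\det(\nabla^2 h+hI)-h^{p-1}\psi(|\nabla h|)^{-1}f_t$, viewed as a family of operators $C^{2,\alpha}(\sn)\to C^\alpha(\sn)$.

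The main obstacle is to verify that $\partial O\cap F_t^{-1}(0)=\emptyset$ for every $t\in[0,1]$, which is what licenses the use of degree theory. Any $h\in\partial O$ with $F_t(h)=0$ is a solution of \eqref{eq:Monge-Ampere equation_2} for $(f_t,p,\psi)$ with $\mu([h])\geq\mu(\rn)/2$, so Lemma~\ref{C^0 estimate for big volume} places $h$ strictly inside the $L^\infty$ and $C^{2,\alpha}$ constraints; the only way to be on the boundary of $O$ is to have $\mu([h])=\mu(\rn)/2$. But then $S^\mu_p([h])=\|f_t\|_{L^1}$, and the $L^p$ isoperimetric assumption yields $\|f_t\|_{L^1}\geq I_p(\mu(\rn)/2)=f_a$, contradicting $\|f_t\|_{L^1}<f_a$. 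Consequently $\deg(F_t,O,0)$ is well-defined and independent of $t\in[0,1]$. At $t=0$, Lemma~\ref{uniqueness_2} forces every zero of $F_0$ in $O$ to be a constant (since the corresponding Wulff shape lies in $\kno$, or in $\conbod$ when $p=1$), and by the choice of $c$ the only constant zero lying in $O$ is $h_0$. Then \cite[Propositions 2.3 and 2.4]{Li89} together with the invertibility of $L_{h_0}$ give $\deg(F_0,O,0)=\deg(L_{h_0},O,0)\neq 0$. Hence $F_1$ has a zero $h\in O$, and $K=[h]\in\kno$ with $\mu(K)\geq\mu(\rn)/2$ solves \eqref{eq:Monge-Ampere equation_2} for $(f,p,\psi)$. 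The even case follows by carrying out the entire argument in the even subspace, in which case $K\in\kne$.
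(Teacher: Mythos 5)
Your proposal is correct and follows essentially the same route the paper takes: the paper itself omits the proof, stating that it is "exactly the same as Theorem~\ref{degree theory for smooth solution}" once one substitutes Lemma~\ref{C^0 estimate for big volume} for the a priori estimates, Lemma~\ref{uniqueness_2} for the isotropic uniqueness, and the single constant solution with $\mu([h])\geq\mu(\rn)/2$ (the larger root of $t^{n-p}\psi(t)=c$ for $1\leq p<n$, or the unique root for $p\geq n$ where \textbf{(D)}$_p$ and \textbf{(S)}$_p$ are superfluous). Your identification of the boundary-avoidance step via the $L^p$ isoperimetric function and the degree computation at $t=0$ matches the intended argument.
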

 Theorem~\ref{degree theory_2} then follows from Theorem~\ref{degree theory for smooth solution_no_sym} by approximation. As we show in the next section, we can use a completely different method to study the range $p\geq n$ and remove the norm assumption on $f$. 

\subsection{Large p}
\label{sec:small_mass_large_p}
In this subsection, we repeat many of the efforts from the previous subsection, but when $p \geq n$. We are able to drop the symmetry assumption in this case. We no longer need to assume Property~\textbf{(D)}$_p$, as, for $p\geq n$, the fact that $\psi$ is strictly decreasing implies that this property is true. We first establish the uniqueness.

\begin{lemma}\label{Uniqueness for p>n}Fix $p\geq n$ and a nonnegative function $f\in C(\sn).$  Let $\psi\in C^1(\R_+)$ be a nonnegative, strictly decreasing function. Suppose the equation \begin{equation} 
\label{eq:monge_4}
  h^{1-p}\psi(|\nabla h|)\det(h_{ij}+h\delta_{ij})=f\end{equation}
  is solved by a positive $h\in C^{2,+}(\sn)$. Then, $h$ is unique.
	\end{lemma}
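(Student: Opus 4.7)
The plan is to use the classical maximum principle, which is well-suited to this regime because the exponent $1-p \leq 1-n$ is non-positive. Suppose $h_1, h_2 \in C^{2,+}(\sn)$ are two positive solutions; it suffices to show $\max_{\sn}(h_1/h_2) \leq 1$, since the symmetric assertion will then give $h_1 \equiv h_2$. So suppose toward contradiction that $g := h_1/h_2$ attains its maximum $g(u_0) > 1$ at some $u_0 \in \sn$.

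At $u_0$, the first-order condition $\nabla_s g(u_0) = 0$ together with $h_1 = g h_2$ gives $\nabla_s h_1(u_0) = g(u_0) \nabla_s h_2(u_0)$, and hence, via $\nabla h = \nabla_s h + h u$ from \eqref{eq:grad_relates}, also $\nabla h_1(u_0) = g(u_0) \nabla h_2(u_0)$. In particular $|\nabla h_1(u_0)| = g(u_0) |\nabla h_2(u_0)| > |\nabla h_2(u_0)|$, the strict inequality because $|\nabla h_2(u_0)| \geq h_2(u_0) > 0$ by \eqref{supp_homo} and Cauchy--Schwarz. The second-order condition $\nabla_s^2 g(u_0) \leq 0$ yields, after the same substitution, the matrix inequality $(h_{1,ij})(u_0) \leq g(u_0) (h_{2,ij})(u_0)$; adding $h_1(u_0) \delta_{ij} = g(u_0) h_2(u_0) \delta_{ij}$ gives
\[
(h_{1,ij} + h_1 \delta_{ij})(u_0) \;\leq\; g(u_0) \, (h_{2,ij} + h_2 \delta_{ij})(u_0).
\]
Monotonicity of the determinant on the cone of positive definite $(n-1) \times (n-1)$ matrices then yields $\det(h_{1,ij} + h_1\delta_{ij})(u_0) \leq g(u_0)^{n-1} \det(h_{2,ij} + h_2\delta_{ij})(u_0)$.

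Dividing equation \eqref{eq:monge_4} for $h_1$ by the same equation for $h_2$ at $u_0$ (legitimate since the $C^{2,+}$ hypothesis forces $f(u_0) > 0$), these three estimates together with the monotonicity of $\psi$ give
\[
1 \;=\; g(u_0)^{1-p} \cdot \frac{\psi(|\nabla h_1(u_0)|)}{\psi(|\nabla h_2(u_0)|)} \cdot \frac{\det(h_{1,ij} + h_1\delta_{ij})(u_0)}{\det(h_{2,ij} + h_2\delta_{ij})(u_0)} \;\leq\; g(u_0)^{n-p},
\]
using $g(u_0) > 1$, $1-p \leq 1-n < 0$, and $\psi$ decreasing. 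For $p > n$ this reads $1 < 1$, a contradiction. The delicate case is the borderline $p = n$: the right-hand side equals $1$, so equality must hold in each factor, forcing in particular $\psi(|\nabla h_1(u_0)|) = \psi(|\nabla h_2(u_0)|)$; this contradicts the \emph{strict} monotonicity of $\psi$ combined with the strict inequality $|\nabla h_1(u_0)| > |\nabla h_2(u_0)|$ established above. The main subtlety is precisely this borderline $p = n$ case, where strict monotonicity of $\psi$ (rather than any strict inequality in the algebraic estimates) is what closes the argument.
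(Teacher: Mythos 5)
Your argument is correct and follows essentially the same route as the paper's proof: evaluate the equation at the maximizer $u_0$ of the ratio $g=h_1/h_2$, use the first-order condition to get $\nabla h_1(u_0)=g(u_0)\nabla h_2(u_0)$, the second-order condition together with determinant monotonicity to compare the Monge--Amp\`ere terms, and monotonicity of $\psi$ to close. The only cosmetic difference is how the borderline $p=n$ case is handled: the paper avoids contradiction by writing $|\nabla h_i(u_0)|=c\,h_i(u_0)$ for a common $c$ and observing that $t\mapsto t^{p-n}\psi(ct)^{-1}$ is strictly increasing, from which $h_1(u_0)\leq h_2(u_0)$ follows directly, whereas you argue by contradiction from $g(u_0)>1$ and invoke strict monotonicity of $\psi$ to make the $\psi$-ratio strictly less than $1$; both amount to the same use of strictness.
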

	\begin{proof}
		 Let $h_1,h_2 \in C^{2,+}(\sn)$ be two positive solutions of \eqref{eq:monge_4}, and set $G(x)=\frac{h_1(x)}{h_2(x)}$. By hypothesis $G$ is positive and $G\in C^{2}(\sn)$. Then, there exists $u_0\in \sn$ such that $G$ reaches its maximum at $u_0$. Thus,
		$$
		0=\nabla G(u_0)=\frac{(\nabla h_1(u_0))h_2(u_0)-h_1(u_0)\nabla h_2(u_0)}{h_2(u_0)^2}
		$$
		and
		$$
		0\geq\nabla^2G(u_0)=\frac{h_2(u_0)\nabla^2h_1(u_0)-h_1(u_0)\nabla^2h_2(u_0)}{h_2^2(u_0)},
		$$
		i.e.,
		$$
		\frac{\nabla^2h_1(u_0)}{h_1(u_0)}\leq\frac{\nabla^2h_2(u_0)}{h_2(u_0)}.
		$$
		Hence,
		\begin{equation*}
			\begin{split}
				f(u_0)h_1(u_0)^{p-1}\psi(|\nabla h_1(u_0)|)^{-1}&=h_1(u_0)^{n-1}\det\left(\frac{\nabla^2h_1(u_0)}{h_1(u_0)}+I\right)\\
				&\leq h_1(u_0)^{n-1}\det\left(\frac{\nabla^2h_2(u_0)}{h_2(u_0)}+I\right)\\
				&=\frac{h_1(u_0)^{n-1}}{h_2(u_0)^{n-1}}f(u_0)h_2(u_0)^{p-1}\psi(|\nabla h_2(u_0)|)^{-1},
			\end{split}
		\end{equation*}
		i.e.,
		$$
		h_1^{p-n}(u_0)\psi(|\nabla h_1(u_0)|)^{-1}\leq h_2(u_0)^{p-n}\psi(|\nabla h_2(u_0)|)^{-1}.
		$$
		Set $\frac{|\nabla h_1(u_0)|}{h_1(u_0)}=\frac{|\nabla h_2(u_0)|}{h_2(u_0)}=c,$ and then
		$$
		h_1^{p-n}(u_0)\psi(c h_1(u_0))^{-1}\leq h_2^{p-n}(u_0)\psi(c h_2(u_0))^{-1},
		$$
		Observe that the function $t\mapsto t^{p-n}\psi(c t)^{-1}$ is increasing, since  $p\geq n$ and $\psi$ is decreasing. Hence
		$$
		h_1(u_0)\leq h_2(u_0),
		$$
		which implies
		$
		h_1\leq h_2
		$ on $\sn$. Similarly, we get $h_1\geq h_2$ on $\sn$. Thus $h_1=h_2$.
	\end{proof}
Then, we turn to the existence. Our first step is again a $C^{2,\alpha}$-estimate.

\begin{lemma}\label{a priori estimate in section 6}
   For $p\geq n$, let $f\in C^\alpha(\sn)$ be such that  $\frac{1}{\tau}<f<\tau$ for some positive constant $\tau$ when $p>n$, and $f<\psi(0)$ when $p=n$. Let $\psi\in C^1(\R_+\setminus\{0\})$ be a function that is strictly decreasing. Suppose $h$ solves \eqref{eq:Monge-Ampere equation_2} for the triple $(f,p,\psi)$. Then, there exists a constant $\tau'>0$, depending only on $\tau$ and $\psi$, such that
		\[
		\frac{1}{\tau'}<h<\tau'
		\quad \text{ and } \quad
    \|h\|_{C^{2,\alpha}}<\tau'.
  \]
	\end{lemma}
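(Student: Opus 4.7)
The plan is to adapt the maximum-principle argument from Lemma~\ref{C^0 estimate}, exploiting the fact that $n-p\leq 0$ makes the pointwise estimates on $h$ especially direct in this range. We first establish uniform positive upper and lower bounds on $h$, and then upgrade to a $C^{2,\alpha}$ estimate via Lemma~\ref{l:regularity}. Let $u_{\max},v_0\in\sn$ realize the maximum and minimum of $h$, respectively. At both critical points $\nabla_s h=0$, and so $|\nabla h|=h$ by \eqref{eq:grad_relates}. The second-order conditions at the extrema further yield
\[
\det(\nabla^2 h+hI)|_{u_{\max}}\leq h(u_{\max})^{n-1}\quad\text{and}\quad\det(\nabla^2 h+hI)|_{v_0}\geq h(v_0)^{n-1}.
\]
Substituting into \eqref{eq:Monge-Ampere equation_2} produces the two pointwise inequalities
\[
h(u_{\max})^{n-p}\psi(h(u_{\max}))\geq f(u_{\max})\quad\text{and}\quad h(v_0)^{n-p}\psi(h(v_0))\leq f(v_0).
\]

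When $p>n$, the function $g(t)=t^{n-p}\psi(t)$ satisfies $g(t)\to 0$ as $t\to\infty$ (since $n-p<0$ and $\psi\leq\psi(0^+)<\infty$) and $g(t)\to+\infty$ as $t\to 0^+$ (since $\psi(0^+)>0$). Combined with $1/\tau<f<\tau$, the two inequalities above force $h(u_{\max})\leq T_1$ and $h(v_0)\geq T_2$ for positive constants $T_1,T_2$ depending only on $\tau$ and $\psi$. When $p=n$, the inequalities simplify to $\psi(h(u_{\max}))\geq f(u_{\max})$ and $\psi(h(v_0))\leq f(v_0)$. The hypothesis $f<\psi(0)$ together with continuity of $f$ on the compact set $\sn$ gives $M:=\max f<\psi(0)$, and strict monotonicity of $\psi$ then yields the positive lower bound $h(v_0)\geq\psi^{-1}(M)$; the upper bound on $h(u_{\max})$ follows symmetrically from $f\geq 1/\tau$ and the monotonicity of $\psi$.

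Having obtained $\tfrac{1}{\tau'}<h<\tau'$, the Wulff shape $[h]$ lies in a fixed spherical shell; combined with \eqref{eq:radial_support}, this controls $|\nabla h|$ between positive constants, so the factor $\psi(|\nabla h|)$ appearing in \eqref{eq:Monge-Ampere equation_2} is bounded between positive constants. Consequently the equation is uniformly elliptic with right-hand side in $C^\alpha(\sn)$, and the desired $C^{2,\alpha}$ estimate follows directly from Lemma~\ref{l:regularity}(3). The main obstacle is the case $p=n$: the vanishing of the exponent $n-p$ brings the equation close to being scale-invariant, and it is precisely the sharp pointwise bound $f<\psi(0)$ that breaks this degeneracy and delivers both the upper bound on $h$ and the positive lower bound.
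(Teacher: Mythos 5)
Your argument is correct and follows essentially the same route as the paper: evaluate the Monge--Amp\`ere equation at the extrema of $h$ on $\sn$, where $\nabla_s h=0$ forces $|\nabla h|=h$ and $\det(\nabla^2 h+hI)=h^{n-1}$ (you phrase this via the second-order conditions, while the paper uses the identity \eqref{eq:monge} together with the limit $\det/h^{n-1}\to 1$ along a maximizing sequence, but the content is identical), then contradict the pointwise bounds on $f$ using the asymptotics of $g(t)=t^{n-p}\psi(t)$ as $t\to 0^+$ and $t\to\infty$, finishing with Lemma~\ref{l:regularity}. The paper simply refers its upper bound back to Lemma~\ref{C^0 estimate} and carries out the lower bound by hand; your unified treatment at $u_{\max}$ and $v_0$ is the same mechanism in slightly tidier form.
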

\begin{proof}
    The upper bound is the same as Lemma \ref{C^0 estimate}. We only need to prove the positive lower bound. Let $\{v_k\}$ be a sequence in $\sn$ such that $\lim_{k\rightarrow\infty}h(v_k)=\inf_{v\in\sn}h(v)$. Then from \eqref{eq:Monge-Ampere equation_2} and \eqref{eq:monge}, 
    \begin{align}\label{lower bound for p>=n}
        \tau&>f(v_k)=h^{1-p}(v_k)\psi(|\nabla h(v_k)|)\det(h_{ij}+h\delta_{ij})|_{v_k}\notag\\
        &\geq h^{n-p}(v_k)\psi(|\nabla h({v_k})|) = h^{n-p}(v_k)\psi(h(v_k))\frac{\psi(|\nabla h({v_k})|)}{\psi(h(v_k))}.
    \end{align}
By way of contradiction, suppose $h(v_k)\rightarrow 0$. From \eqref{eq:grad_relates}, $\frac{\psi(|\nabla h({v_k})|)}{\psi(h(v_k))} \to 1$ as $k\to \infty$. On the other hand, $h(v_k)^{n-p}\psi(h(v_k))\rightarrow\infty$ as $k\rightarrow \infty$ when $p>n$, and $h(v_k)^{n-p}\psi(h(v_k))\rightarrow\psi(0)$ when $p=n$; this contradicts \eqref{lower bound for p>=n}. Thus $h<\tau'$ for some positive constant $\tau'$ that depends only on $\tau$ and $\psi$.

 Moreover, by Lemma \ref{l:regularity}, the $C^{2,\alpha}$ a priori estimate is also established.
\end{proof}
In the next lemma, we introduce the linear operator of \eqref{eq:monge_4} and show it is invertible.
  \begin{lemma} \label{linear operator is invertible}Fix $p\geq n$. Let $\psi\in C^1(\R_+)$ be a strictly decreasing nonnegative function.  Fix a positive function  $f\in C^\alpha(\sn)$. Suppose that $h\in C^+(\sn)$ is a convex function solving  \eqref{eq:monge_4}. Then the linearized operator of \eqref{eq:monge_4} at $h$, defined in \eqref{eq:linear_op_2} below, is invertible.
  
  \end{lemma}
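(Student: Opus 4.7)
The plan is to compute the linearization $L_h$ of \eqref{eq:monge_4} at $h$ explicitly, recognize it as a second-order uniformly elliptic operator on $\sn$ with H\"older-continuous coefficients, and then deduce invertibility via the Fredholm alternative coupled with a maximum-principle argument in the spirit of Lemma~\ref{Uniqueness for p>n}.

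Setting $Q_{ij}:=h_{ij}+h\delta_{ij}$ and letting $(Q^{ij})$ denote its inverse (positive definite since $h\in C^{2,+}(\sn)$), a direct differentiation of $F(h):=h^{1-p}\psi(|\nabla h|)\det Q-f$ at $h$, combined with $F(h)=0$, produces
\[
L_h g \;=\; f\Bigl[\,Q^{ij}(g_{ij}+g\delta_{ij})\;+\;\tfrac{\psi'(|\nabla h|)}{\psi(|\nabla h|)\,|\nabla h|}\langle\nabla h,\nabla g\rangle\;-\;\tfrac{p-1}{h}\,g\,\Bigr].
\]
Since this is uniformly elliptic on the compact manifold $\sn$, the operator $L_h:C^{2,\alpha}(\sn)\to C^{\alpha}(\sn)$ is Fredholm of index zero, and invertibility reduces to showing $\ker L_h=\{0\}$.

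To rule out a nontrivial kernel, suppose $L_h g=0$ and consider $G:=g/h\in C^2(\sn)$. If $G$ attains a positive maximum at $u_0\in\sn$, then $\nabla G(u_0)=0$ gives $\nabla g(u_0)=G(u_0)\nabla h(u_0)$, and a short calculation based on this identity reduces $\nabla^2 G(u_0)\preceq 0$ to the matrix inequality $g_{ij}(u_0)\leq G(u_0)h_{ij}(u_0)$. Adding $G(u_0)h(u_0)\delta_{ij}=g(u_0)\delta_{ij}$ to both sides and contracting with $Q^{ij}\succeq 0$, while using $Q^{ij}Q_{ij}=n-1$, yields
\[
0\;=\;L_hg(u_0)\;\leq\; G(u_0)\,f(u_0)\Bigl(n-p\,+\,\tfrac{|\nabla h|\,\psi'(|\nabla h|)}{\psi(|\nabla h|)}\Bigr)\bigg|_{u_0}.
\]
Because $p\geq n$ and $\psi$ is strictly decreasing, the parenthetical quantity is $\leq 0$ (strictly so when $p>n$), forcing $G(u_0)\leq 0$ and contradicting $G(u_0)>0$. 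The mirror argument at a negative minimum shows $G\geq 0$ throughout, so $G\equiv 0$ and hence $g\equiv 0$.

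The main subtlety I anticipate is the borderline case $p=n$ paired with the possibility that $\psi'(|\nabla h(u_0)|)=0$, which collapses the bracket to zero and renders the pointwise inequality inconclusive. To dispose of this I would apply the strong maximum principle to $w:=g-G(u_0)h$: it satisfies $w\leq 0$ on $\sn$, $w(u_0)=0$, and $L_hw=-G(u_0)L_hh$, where a direct computation gives $L_hh=f(n-p+|\nabla h|\psi'/\psi)\leq 0$, so $L_hw\geq 0$. The Hopf lemma then forces $w\equiv 0$, i.e.\ $g=G(u_0)h$; substituting back into $L_hg=0$ yields $G(u_0)L_hh\equiv 0$ on $\sn$, and since $\psi$ being strictly decreasing precludes $\psi'\circ|\nabla h|$ from vanishing identically along any nontrivial (connected) image of $|\nabla h|$, the only possibility is $G(u_0)=0$, completing the proof.
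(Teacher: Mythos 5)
Your argument follows essentially the same route as the paper's: you linearize the Monge--Amp\`ere equation (additively in $g$; the paper differentiates multiplicatively via $h_\epsilon = he^{\epsilon\Gamma}$, so your $G=g/h$ is precisely the paper's $\Gamma$), extract a sign at the extrema of $G$, and pass from a trivial kernel to invertibility by the Fredholm alternative (which the paper leaves implicit; making it explicit is a small but real improvement). You also correctly flag a step the paper skips: the paper's assertion that $L_h(1)<0$ rests on ``$\psi$ strictly decreasing,'' but a $C^1$ strictly decreasing function can have $\psi'$ vanish at isolated points, so when $p=n$ the zeroth-order coefficient is only $\le 0$ and the pointwise comparison by itself is inconclusive. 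That is a genuine observation.

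However, the repair you propose does not fully close the gap, and in fact no repair can within the lemma's stated hypotheses. First, a technical point: the strong maximum principle/Hopf lemma needs a one-sided zeroth-order coefficient for the operator you apply it to. As an operator on $w=g-G(u_0)h$, $L_h$ does not have a signed zeroth-order coefficient (it picks up $\operatorname{trace}(Q^{-1})>0$); you should instead run the argument on $W=G-G(u_0)$, since $W\mapsto L_h(hW)$ has zeroth-order coefficient $L_h h\le 0$. Second, and more importantly, your concluding sentence asserts that $L_h h\equiv0$ is precluded because the image of $|\nabla h|$ is a ``nontrivial connected'' interval, but the image can be a \emph{single point} exactly when $|\nabla h|$ is constant, i.e.\ when $h$ is constant and $[h]$ is a centered ball, and then nothing forbids $\psi'(|\nabla h|)=0$. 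In that configuration the constants lie in $\ker L_h$ and the lemma, as stated, is actually \emph{false}: take $p=n$, a $C^1$ strictly decreasing $\psi$ with $\psi'(1)=0$ (for instance $\psi(t)=C\exp\!\big(-\int_0^t(s-1)^2\,ds\big)$), $h\equiv 1$, $f\equiv\psi(1)>0$; all hypotheses hold, the Monge--Amp\`ere equation is satisfied, yet $L_h 1=0$. The paper's proof has the same vulnerability (its claim $L_h(1)<0$ fails in this example), so your write-up is the more careful of the two; the honest fix is to strengthen the hypothesis (e.g.\ $\psi'<0$ on $(0,\infty)$, or $p>n$), or to exclude the constant-$h$ case in the applications.
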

\begin{proof}
Let $\Gamma\in C^{2}(\sn)$  and set $h_\epsilon=he^{\epsilon \Gamma}$. Then, the linear operator $L_h$ of \eqref{eq:monge_4} at $h$ acting on $\Gamma$, $L_h(\Gamma)\in C(\sn)$, is given by
\begin{equation}
\begin{split}
\label{eq:linear_op_2}
			L_h(\Gamma)&:=\frac{d}{d\epsilon}\det((h_\epsilon)_{ij}+h_\epsilon\delta_{ij})|_{\epsilon=0}-f\frac{d}{d\epsilon}[h_\epsilon^{p-1}\psi(|\nabla h_\epsilon|)^{-1}]|_{\epsilon=0}\\
			&=\sum_{i,j}(\omega^{ij}(h_{ij}\Gamma+h_j\Gamma_i+h_i\Gamma_j+h\Gamma_{ij}+h\Gamma\delta_{ij}))\\
			&-f\left[(p-1)h^{p-1}\Gamma\psi(|\nabla h|)^{-1}+h^{p-1}(\psi(|\nabla h|)^{-1})'\frac{1}{|\nabla h|}\left(\left(h^2+\sum_{i}h_i^2\right)\Gamma+\sum_{i}h(h_i\Gamma_i)\right)\right]
		\end{split}
  \end{equation}
		where $\omega^{ij}$ is the cofactor matrix of $(h_{ij}+h\delta_{ij})_{i,j}$. Since $p\geq n$ and $\psi$ is strictly decreasing, we deduce 
\begin{align*}
    L_h(1)
    &=\sum_{i,j}\omega^{ij}(h_{ij}+h\delta_{ij})\\
    &-f\left[(p-1)h^{p-1}\psi(|\nabla h|)^{-1}+h^{p-1}(\psi(|\nabla h|)^{-1})'\frac{1}{|\nabla h|}\left(h^2+\sum_{i}h_i^2\right)\right]\\
    &=f\left[(n-p)h^{p-1}\psi(|\nabla h|)^{-1}-h^{p-1}(\psi(|\nabla h|)^{-1})'|\nabla h|\right]<0.
\end{align*}

  Observe that we can write
  \begin{equation}
\label{eq:linear_op_new}	
 L_h(\Gamma)=\Gamma L_h(1)+\sum_{i,j}\omega^{ij}(h_j\Gamma_i+h_i\Gamma_j+h\Gamma_{ij})-\sum_{i}fh^{p-1}hh_i\Gamma_i.
   \end{equation}
  Suppose $\Gamma$ is a function so that $L_h(\Gamma)=0$. Let $u_0\in \sn$ be a point where $\Gamma$ obtains its maximum, which implies $\nabla_s \Gamma(u_0) = 0$ (so $\Gamma_i(u_0) = 0$). Then, \eqref{eq:linear_op_new} becomes
  \begin{equation*}
  \Gamma(u_0) L_h(1)[u_0]+\sum_{i,j}\omega^{ij}(u_0) h(u_0)\Gamma_{ij}(u_0)=0.
  \end{equation*}
 
  Combining $L_h(1)<0$ and the fact that $\omega^{ij}$ is positive definite (since $h$ is convex), with $\Gamma_{ij}$ being semi-negative definite at $u_0$, we obtain 
  \begin{equation*}
  \Gamma(u_0) \leq0,
  \end{equation*}
and therefore $\Gamma \leq 0$. Similarly, by repeating the above procedure at a point where $\Gamma$ obtains its minimum, we obtain $ \Gamma\geq0.$ Thus, 
  \begin{equation*}
  \Gamma\equiv0.
  \end{equation*}
 This indicates \eqref{eq:linear_op_2} is invertible.
 \end{proof} 

Finally, we use convex combinations and the continuity of the Monge-Amp\`ere equation to establish existence.
\begin{theorem}
\label{continuity method_local}
 Fix $\alpha\in(0,1)$ and $p\geq n$. Let $\psi\in C^1(\R_+)$ be a strictly decreasing nonnegative function. Let $f\in C^\alpha(\sn)$ be such that
 \begin{enumerate}
     \item when $p>n$: $\frac{1}{\tau}<f<\tau$ for some positive constant $\tau$.
     \item when $p=n$: $f < \psi(0)$.
 \end{enumerate}
 Then, there exists a unique, positive, convex solution $h\in C^{2,\alpha}(\sn)$ to \eqref{eq:monge_4} for the triple $(f,p,\psi)$. 
\end{theorem}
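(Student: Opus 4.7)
The plan is to use the continuity method, since uniqueness is already furnished by Lemma~\ref{Uniqueness for p>n}, so only existence needs to be established. Choose a constant $c$ lying in the same admissibility range that $f$ is assumed to lie in (so $c\in(1/\tau,\tau)$ when $p>n$, and $c\in(0,\psi(0))$ when $p=n$), and consider the deformation $f_t=(1-t)c+tf$ for $t\in[0,1]$. The key point is that $f_t$ again satisfies the hypothesis for every $t$, because convex combinations preserve both the two-sided positive bound (when $p>n$) and the one-sided bound $f_t<\psi(0)$ (when $p=n$). At $t=0$, the isotropic equation $h_0^{n-p}\psi(h_0)=c$ admits a unique positive constant solution: when $p>n$, the map $t\mapsto t^{n-p}\psi(t)$ is a strictly decreasing bijection from $(0,\infty)$ onto $(0,\infty)$, and when $p=n$ it is a bijection from $[0,\infty)$ onto $(0,\psi(0)]$, so the choice $c<\psi(0)$ yields a unique $h_0>0$.

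Let $T\subseteq[0,1]$ denote the set of those $t$ for which there is a positive, convex solution $h_t\in C^{2,\alpha}(\sn)$ to the equation with right-hand side $f_t$. I will show $T$ is nonempty, open, and closed, whence $T=[0,1]$ and the case $t=1$ gives the desired solution. Nonemptiness is clear from $0\in T$. For openness, I would apply the implicit function theorem in Banach spaces to the map
$$F(h,t):=h^{1-p}\psi(|\nabla h|)\det(h_{ij}+h\delta_{ij})-f_t$$
defined on the open cone of positive, convex functions in $C^{2,\alpha}(\sn)$ times $[0,1]$. The Fr\'echet derivative $D_h F$ at $(h_{t_0},t_0)$ is a linear elliptic operator agreeing, up to a nonvanishing multiplicative factor coming from the Monge--Amp\`ere linearization, with the operator analyzed in Lemma~\ref{linear operator is invertible}; that lemma gives injectivity and, combined with standard Schauder/Fredholm theory on the sphere, invertibility as a map $C^{2,\alpha}(\sn)\to C^{\alpha}(\sn)$.

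For closedness, take $t_k\in T$ with $t_k\to t^*$ and corresponding solutions $h_{t_k}$. Since $\|f_{t_k}\|_{L^\infty}$ and $\|f_{t_k}\|_{C^\alpha}$ are uniformly bounded (again by the convex combination structure), Lemma~\ref{a priori estimate in section 6} produces a uniform constant $\tau'$ such that $1/\tau'<h_{t_k}<\tau'$ and $\|h_{t_k}\|_{C^{2,\alpha}}<\tau'$. Arzel\`a--Ascoli (applied in $C^{2,\alpha'}$ for any $\alpha'<\alpha$) yields a subsequence converging to some $h_{t^*}\in C^{2,\alpha'}(\sn)$ that is still positive, convex, and solves the equation at $t^*$; Schauder regularity (or another application of the a priori estimate to the limit) upgrades it back to $C^{2,\alpha}$, giving $t^*\in T$. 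The main obstacle in this scheme is the need for uniform two-sided bounds on $h_t$ along the deformation path, which is precisely what Lemma~\ref{a priori estimate in section 6} delivers; the structural hypotheses (strict decrease of $\psi$, together with the bound $f<\psi(0)$ in the borderline case $p=n$) are what make those uniform bounds survive as $t$ varies.
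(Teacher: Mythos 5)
Your proof is correct and follows essentially the same route as the paper: a continuity method with the deformation $f_t=(1-t)c+tf$, the constant solution at $t=0$ furnished by monotonicity of $t\mapsto t^{n-p}\psi(t)$, openness via Lemma~\ref{linear operator is invertible} and the implicit function theorem, closedness via the a priori estimates of Lemma~\ref{a priori estimate in section 6} with Arzel\`a--Ascoli, and uniqueness from Lemma~\ref{Uniqueness for p>n}. The only cosmetic difference is that you pass through $C^{2,\alpha'}$ and invoke Schauder to recover $C^{2,\alpha}$, which the paper elides; otherwise the two arguments coincide.
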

\begin{proof}
    We first show that there exists a constant solution to \eqref{eq:monge_4}. From Proposition~\ref{p:uniqueness}, we see that the condition $\lim_{t\to\infty}t^{n-p}\psi(t)=0$ yields such a solution when $f=c_0$ for $c_0$ small enough. Since $c_0$ is strictly positive, we can find a constant $0<\tau$ such that $\frac{1}{\tau} <c_0 < \tau$. Notice that, when $p=n$, the condition reduces to $\psi$ being decreasing. Since $\psi$ has its maximum at the origin, we are free to take $\tau = \psi(0)$ when $p=n$.
    
    Now, consider a family of functions:
 \begin{equation}\label{t monge ampere equation}
 h^{1-p}\psi(|\nabla h|)\det(h_{ij}+h\delta_{ij})=f_t,
 \end{equation}
 where $f_t=(1-t)c_0+tf$, $t\in[0,1]$ and $c_0,f$ such that $\frac{1}{\tau}<c_0,f<\tau$. Thus $\frac{1}{\tau}<f_t<\tau$. For ease, set $\mathcal{S}= C^{2,\alpha}(\sn)\cap \{h\in C^+(\sn): h\text{ is convex}\}$. Next, define
 \[
 O=\{t\in[0,1]:\eqref{t monge ampere equation}\ \text{admits a solution}\ h_t\in \mathcal{S}\}.
 \]
 We have shown that $0\in O$, and thus $O$ is not empty. Next, we prove that $O$ is an open set. Set $ F(t,h):=h^{1-p}\psi(|\nabla h|)\det(h_{ij}+h\delta_{ij})-f_t$. For any $t^\star\in O$, by the definition of $O$, there exists a solution $h_{t^\star}\in \mathcal{S}$ that satisfies equation \eqref{t monge ampere equation}, i.e., $F(t^\star,h_{t^\star})=0$. From Lemma \ref{linear operator is invertible}, we may use the Implicit Function Theorem to obtain $r,r_1>0$ such that, if we define the following metric balls, $B_r(t^\star)\subset [0,1]$ and $B_{r_1}(h_{t^\star})\subset \mathcal{S}$, then there exists a unique $u\in C^1(B_r(t^\star),B_{r_1}(h_{t^\star}))$ with the following property: $u(t^\star)=h_{t^\star}$ and $F(t,u(t))=0$ for any $t\in B_r(t^\star)$. Thus, $O$ is an open set.
 
 Next, we prove that $O$ is a closed set. Let $\{t_k\}_{k=1}^\infty$ be a sequence in $O$ and suppose $t_k\to t_0$ as $k\to\infty$. We will show that $t_0\in O$. From the definition of $O$, for every $t_k$, there exists a solution $h^{(k)}:=h_{t_k}\in\mathcal{S}$ that satisfies \eqref{t monge ampere equation}. Notice that $\frac{1}{\tau}<f_{t_k}<\tau$. Thus, by Lemma \ref{a priori estimate in section 6}, there exists a constant $\tau'$, which only depends on $\tau$ and $\psi$, such that $\|h^{(k)}\|_{C^{2,\alpha}}<\tau'$. Then, by the Arzel\`a-Ascoli Theorem, there exists a subsequence $\{h^{(k_\ell)}\}\subset\{h^{(k)}\}$ and a $h^{(0)}\in \mathcal{S}$ satisfying $h^{{(k_\ell)}}\rightarrow h^{(0)}$ as $\ell\rightarrow\infty$. Moreover,
  \[
  (h^{(k_\ell)})^{1-p}\psi(|\nabla h^{(k_\ell)}|)\det(h^{(k_\ell)}_{ij}+h^{(k_\ell)}\delta_{ij})=f_{t_{k_\ell}}.
  \]
  Taking the limit as $\ell\rightarrow\infty$ in the above equation, we deduce
  \[
  (h^{(0)})^{1-p}\psi(|\nabla h^{(0)}|)\det(h^{(0)}_{ij}+h^{(0)}\delta_{ij})=f_{t_0},
  \]
  which means $h_{t_0}:=h^{(0)}$ solves \eqref{t monge ampere equation} when $t=t_0$. By definition, $t_0\in O$. Thus, $O$ is a closed set.

  Consequently, $O$ is both open and closed in $[0,1]$; since $[0,1]$ is connected, we must have $O=[0,1]$. In particular, when $t=1$, there exists a solution $h:=h_1\in \mathcal{S}$ to equation \eqref{eq:monge_4}. From Lemma~\ref{a priori estimate in section 6}, this solution is positive. The uniqueness of the solution then follows from Lemma~\ref{Uniqueness for p>n}.
\end{proof}

We deduce the following from Theorem~\ref{continuity method_local} via a standard approximation argument.
\begin{theorem}\label{continuity method}
		Fix $n\in \mathbb N$ and $p\geq n$. Suppose $\psi \in C^1(\R_+)$ is strictly decreasing on its support. Let $\mu$ be the rotationally invariant measure on $\R^n$ with density $\psi(|\cdot|)$.  Let $\nu$ be a finite Borel measure on $\sn$, not concentrated on any great hemisphere.

  If $p>n$, then, there exists $K\in\kno$ such that $S^\mu_{K,p} = \nu.$
    If $p=n$, and we additionally assume that $\nu(\sn) \leq \psi(0)n\kappa_n$, then, there exists $K\in\kno$ such that $S^\mu_{K,n} = \nu.$
	\end{theorem}

 %%%%%%%%%
\appendix

\section{Isoperimetric Inequalities}
\label{sec:iso}
This section is dedicated to discussing the isoperimetric function of a measure on $\R^n$, which we recall is the largest function satisfying the inequality $\mu^+(\partial A) \geq I_\mu(\mu(A)).$
 
 Let us discuss a few examples; the first three can be found in, say, the work by Bakry and Ledoux \cite{BL96}, and the last is from Barthe and Maurey \cite{BM00}:
\begin{enumerate}
    \item The classic isoperimetric inequality on $\R^n$ is precisely that $I_{\vol_n}(t)=n\vol_n(B_2^n)^\frac{1}{n}t^{\frac{n-1}{n}}$, and the extremal sets are the Euclidean balls.
    \item Set $\Phi(t)=\gamma_1((-\infty,t))$. Then, $I_{\gamma_n}=\Phi^{\prime}\circ\Phi^{-1}$. Here, $\Phi^\prime$ denotes the derivative of $\Phi$ in $t$, $\Phi^{-1}$ denotes the inverse of $\Phi$, and $\circ$ denotes function composition. Here, the extremal sets are half-spaces.
    \item Let $\mu$ be a probability measure on $\R^n$ with smooth, strictly positive density $e^{-V}$ such that $\hess{V(x)}\geq \rho \mathrm{Id}_n$. Here, $\rho>0$, $\mathrm{Id}_n$ is the $n\times n$ identity matrix, and the inequality is with respect to symmetric matrices uniformly for every $x\in\R^n$. Then, $I_\mu \geq \sqrt{\rho}I_{\gamma_n}$.
    \item Let $\mu$ be the uniform measure on the cube $[-1,1]^n$. Then, $I_\mu \geq \sqrt{2\pi}I_{\gamma_n}$.
\end{enumerate}
All of the above examples are $\log$-concave measures. Bobkov \cite{BS99} completed the picture for this class of measures:
if $\mu$ is a log-concave probability measure on $\R^n$, then, for every Borel set $A$, every point $x_0\in\R^n$ and every $r>0$, one has
$$2r\mu^+(\partial A)\geq -\mu(A)\log\mu(A)-(1-\mu(A))\log(1-\mu(A))+\log(\mu(rB_2^n+x_0)).$$
Since we are free to pick $x_0$, we can set $x_0 = \int_{\R^n}xd\mu(x)$. Then, we can find an $r=r(\mu(A))$ sufficiently large so that the sum of the last two terms is zero, i.e., $r(t)$ on $[0,1]$ is implicitly defined by
$$\mu\left(rB_2^n+\int_{\R^n}xd\mu(x)\right) = (1-\mu(A))^{(1-\mu(A))}.$$
Then, one obtains
$\mu^+(\partial A) \geq \ell(\mu(A)),$
i.e., $I_\mu \geq \ell.$ Here, $\ell(t)=-\frac{t}{2r(t)}\log t$ is a function on $[0,1]$. 

Bobkov \cite{BS07} also studied $s$-concave measures (in the sense of Borell). Let $\mu$ be $s$-concave. Then,
$$I_\mu \geq \frac{c(s)}{m(\mu)}\left(\min\{\mu(A),1-\mu(A)\}\right)^{1-s},$$
where $m(\mu)$ is so that $\mu\left(\{x:|x|\leq m(\mu)\}\right)=\frac{1}{2}$ and
\begin{equation}c(s)=
    \begin{cases}
        \frac{2^{-s}-1}{40(2^{s^2-2s}-1)} & s<0,
        \\
        \frac{1}{80} & s=0,
        \\
        \frac{1}{16} & s\in (0,1].
    \end{cases}
\end{equation}

From these results, we deduce that any finite, rotationally invariant $s$-concave measure $\mu$ whose density is $C^1$, has its maximum at the origin, and satisfies Property~$\mathbf{(D)_p}$ (for a fixed $p \in (-n-1,n)$) is valid for Theorem~\ref{t:small_mass_best}. In fact, we see that, for $\log$-concavity ($s=0$), we have two different choices of isoperimetric function. But we also have, from the ground-breaking work by Milman and Rotem \cite{MR14}, two different choices of isoperimetric functions for $s>0$ under an additional homogeneity assumption: Let $\mu$ be $s$-concave and $1/s$-homogeneous with $s\in (0,\frac{1}{n}]$. Then, 
$$I_\mu(t) = \frac{1}{q}\mu(B_2^n)^qt^{1-q}, \quad \text{where } \frac{1}{q}=\frac{1}{s}+n,$$
and the extremal sets are precisely dilates of $B_2^n$.

We hope we have provided ample examples illustrating Theorem~\ref{t:small_mass_best}. We wrap up with discussing Cheeger constants. We say that a probability measure $\mu$ satisfies an isoperimetric inequality of Cheeger-type if there exists a constant $c$ such that
\begin{equation}I_\mu(t) \geq c \min\{t,1-t\},
\label{eq:cheeger}
\end{equation}
i.e., $c$ satisfies
$$\mu^+(\partial A) \geq c \min\{\mu(A),1-\mu(A)\}.$$
The largest constant $c$ where \eqref{eq:cheeger} holds is called the isoperimetric, or Cheeger, constant of $\mu$, and this will be denoted as: $Is(\mu)=\inf \frac{\mu^+(\partial A)}{\min\{\mu(A),1-\mu(A)\}},$
where the infimum runs over all open sets with smooth boundary such that $\mu(A)\in (0,1)$. 

Similar to the case of the isoperimetric function, one usually shows a constant $c$ satisfies \eqref{eq:cheeger}, thus bounding $Is(\mu)$ from below. The isoperimetric constant was introduced in the context of spectral gaps and isoperimetric inequalities on compact manifolds by Cheeger \cite{JC70}.

Let $\mu$ be the Borel measure on $\R$ with density $\chi_{[0,\infty)}(t)e^{-t}$. Then, Talagrand \cite{TM91} showed that $I_\mu(t)=\min\{t,1-t\}$, which also yields that $Is(\mu)=1$. This is a rare instance where the optimal isoperimetric inequality of the form \eqref{eq:Iso_eq} coincides with the optimal Cheeger-type isoperimetric inequality \eqref{eq:cheeger}. It is natural to try and extend this result to higher dimensions. Bobkov and Houdr\'e \cite{BH97} showed that, if $\mu$ is a probability measure on $\R$ and $\mu^n$ is the product measure on $\R^n$ given by the $n$-fold product of $\mu$, then 
$$Is(\mu^n)\geq \frac{1}{2\sqrt{6}}Is(\mu).$$

It behooves us to mention the following concerning the isoperimetric constant. A natural question is to bound $Is(\mu)$ from below independently of $\mu$ and the dimension. Firstly, set $Is(n)=\inf Is(\mu),$ where the infimum runs over all isotropic, log-concave measures on $\R^n$ (recall a measure is isotropic if $\int_{\R^n}x_id\mu(x)=0$ and $\int_{\R^n}x_ix_jd\mu(x)=\delta_{i,j}$ for $i,j=1,\dots,n$). The KLS conjecture \cite{KLS95} suggests that there exists a constant $C$ independent of the dimension such that $Is(n) \geq C$. This conjecture is still open, and implies the famous hyperplane conjecture by Bourgain. As of writing, the most recent progress is by Klartag \cite{BK23}: $Is(n) \geq C(\log n)^{-\frac{1}{2}}$ for some absolute constant $C$.

We turn to the $L^p$ isoperimetric function from Definition~\ref{def: Lp_iso}. We first recall the following from \cite[Theorem 6.1]{KL23}.
\begin{proposition}[Lp Minkowski's First Inequality for F-concave measures]
\label{t:LpFcon}
Fix $p\in \R, p\neq 0$. Let $\mu$ be a Borel measure on $\R^{n}$, such that $\mu$ is $L^p$ $F$-concave and $F$ is differentiable, with respect to a class of Borel sets $\mathcal{C}$. Then, for every $K,L\in\mathcal{C}$, one has that:
$$
\mu_p(K; L) \geq \mu_p(K ; K)+\frac{F(\mu(L))-F(\mu(K))}{F^{\prime}(\mu(K))}.
$$
\end{proposition}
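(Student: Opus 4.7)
The plan is to reduce the first-order inequality to a one-variable statement by differentiating the defining $L^p$ $F$-concavity at $\lambda = 0$, where $K$ sits as the base point on the $L^p$ path from $K$ to $L$.

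First I would set, for $\lambda \in [0,1]$,
$$
K_\lambda := (1-\lambda)\cdot K +_p \lambda \cdot L = \bigl[\bigl(h_K^p + \lambda(h_L^p - h_K^p)\bigr)^{1/p}\bigr],
$$
and introduce the auxiliary function
$$
g(\lambda) := F(\mu(K_\lambda)) - (1-\lambda) F(\mu(K)) - \lambda F(\mu(L)).
$$
Applying $F$ to the $L^p$ $F$-concavity hypothesis and splitting into the two cases $F$ increasing or $F$ decreasing shows that $g(\lambda) \geq 0$ on $[0,1]$ when $F$ is increasing, and $g(\lambda) \leq 0$ when $F$ is decreasing. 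In either case $g(0) = 0$, so the one-sided derivative at $0$ satisfies $g'(0)\cdot \operatorname{sgn}(F')\geq 0$, i.e. $F'(\mu(K))\, g'(0) \geq 0$.

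Next I would compute $g'(0)$ explicitly. By the chain rule,
$$
g'(0) = F'(\mu(K)) \cdot \frac{d}{d\lambda}\bigg|_{\lambda = 0} \mu(K_\lambda) - \bigl(F(\mu(L)) - F(\mu(K))\bigr).
$$
The derivative of $\mu(K_\lambda)$ is precisely the content of Proposition~\ref{p:deriv} applied with $f = h_L^p - h_K^p$, which (using the assumption that $S^\mu_{K,p}$ is a finite Borel measure on $\sn$, implicit in the finiteness of $\mu_p(K;L)$ and $\mu_p(K;K)$) yields
$$
\frac{d}{d\lambda}\bigg|_{\lambda = 0} \mu(K_\lambda) = \frac{1}{p}\int_{\sn}\bigl(h_L^p - h_K^p\bigr)\, dS^\mu_{K,p} = \mu_p(K;L) - \mu_p(K;K).
$$
Plugging this back in, the inequality $F'(\mu(K))\, g'(0) \geq 0$ becomes
$$
F'(\mu(K))^2\bigl[\mu_p(K;L) - \mu_p(K;K)\bigr] \geq F'(\mu(K))\bigl[F(\mu(L)) - F(\mu(K))\bigr].
$$
Dividing by the strictly positive quantity $F'(\mu(K))^2$ (strict monotonicity of $F$ makes $F'(\mu(K)) \neq 0$) yields the claimed bound.

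The main thing to be careful about is the sign bookkeeping when $F$ is decreasing: the factor $F'(\mu(K))$ then flips the inequality twice (once at the concavity step, once when dividing), and these two flips must cancel. Everything else is a mechanical combination of Proposition~\ref{p:deriv} and the definition \eqref{eq:Lp_mixed_measures} of $\mu_p(K;L)$, so no genuine obstacle remains once the sign analysis is handled in a single clean split on $\operatorname{sgn}(F')$.
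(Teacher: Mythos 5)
The paper does not prove this proposition itself; it simply recalls it from \cite[Theorem 6.1]{KL23}. Your argument is correct and is the standard variational derivation one expects there: differentiate the $L^p$ $F$-concavity inequality along the $+_p$-path at $\lambda=0$, feed the Wulff-shape derivative formula (Proposition~\ref{p:deriv} with $f = h_L^p - h_K^p$) into the chain rule, and track the sign of $F'$ through the two applications of monotonicity so that the flips cancel. The only caveat worth flagging is your parenthetical remark that strict monotonicity of $F$ forces $F'(\mu(K)) \neq 0$ --- that is not true in general (consider $F(x)=x^3$ at the origin); nonvanishing of $F'$ at the relevant points is a hypothesis one reads off from the appearance of $F'(\mu(K))$ in the denominator of the claimed inequality, not a consequence of strict monotonicity.
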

\noindent Next, notice that from \eqref{eq:Lp_mixed_measures},
\begin{align*}
\frac{r^{1/p}}{p}S^\mu_{p}(K) &=   r^{1/p}\lim_{\epsilon \to 0}\frac{\mu(K+_p \epsilon \cdot B_2^n)-\mu(K)}{\epsilon} =  \lim_{\epsilon \to 0}\frac{\mu(K+_p \epsilon \cdot rB_2^n)-\mu(K)}{\epsilon} = \mu_p(K;rB_2^n).
\end{align*}
Therefore, for $p>0$ and $r>0$, we obtain from Proposition~\ref{t:LpFcon} that
\begin{equation}
\label{eq:Lpiso}
S^\mu_{p}(K) \geq r^{-1/p} \left[\mu(K ; K)+p\frac{F(\mu(rB_2^n))-F(\mu(K))}{F^{\prime}(\mu(K))}\right],
\end{equation}
where we used that $p\mu_p(K;K)=\mu(K;K)$. Since $\mu(K;K) \geq 0$, we see that \eqref{eq:Lpiso} can always be made positive. Indeed, define $r=r(\mu(K))$ as follows: pick $r$ so that $$\text{sgn}(F^\prime(\mu(K))) F(\mu(rB_2^n)) >\text{sgn}(F^\prime(\mu(K))) F(\mu(K)).$$

Therefore, for $p>0$, \eqref{eq:Lpiso} reduces the question of establishing an $L^p$ isoperimetric inequality to establishing a measure is $L^p$ $F$-concave. Following Hosle, Kolesnikov, and Livshyts \cite{HKL21}, let us consider the case where $F(x)=x^\frac{q}{n}$ for some $q>0$, i.e., $\mu$ satisfies the following $(p,q)$-Brunn-Minkowski-type inequality
\begin{equation}
\label{eq:BMLPQ}
\mu((1-\lambda)\cdot K +_p \lambda \cdot L) \geq \left((1-\lambda)\mu(K)^\frac{q}{n}+\lambda \mu(L)^\frac{q}{n}\right)^\frac{n}{q}.\end{equation}
Then,  \eqref{eq:Lpiso} becomes
\begin{equation}
\label{eq:LP_isoperimetric_inequality}
S^\mu_{p}(K) \geq r^{-\frac{1}{p}}\left(\mu(K;K) + \frac{np}{q}\left(\mu(rB_2^n)^\frac{q}{n}-\mu(K)^\frac{q}{n}\right)\mu(K)^\frac{n-q}{q}\right).\end{equation}

We have essentially covered the known examples of \eqref{eq:BMLPQ} when $p\geq 1$ in Section~\ref{sec:concavity}. As for $p <1$, this is still an active area of research (even in the case of the Lebesgue measure). For example, in \cite{BLYZ12} it was shown that the case $(0,0)$ for volume and symmetric convex bodies implies the case $(p,p)$ for all $p>0$; the case $(0,0)$ was then verified for $n=2$ in the same work, when both bodies are balls \cite{CLM17}, and also when $K$ is a ball and $L$ is close to $K$ \cite{CL20}. Saroglou \cite{CS15} established the $(0,0)$ case for volume when all bodies are symmetric under orthogonal coordinate hyperplanes, and showed that the case $(0,0)$ for symmetric convex bodies and volume implies the same for all even log-concave measures. As for other positive results, the $(p,p)$-inequality for volume has been shown to hold for $p$ close enough to $1$ (see, e.g., Kolesnikov and Milman, \cite{KM22}, Putterman \cite{EP21}, and Chen, Huang, Li and Liu \cite{CHLL20}). For measures besides volume, Livshyts \cite{GL23} recently established that every even log-concave measure satisfies a $(1,n^{-3-o(1)})$ inequality over symmetric convex bodies. The reader is encouraged to see Hosle, Kolesnikov and Livshyts \cite{HKL21} for a thorough overview of other known results and the establishment of more partial cases, including results for measures besides Lebesgue.

For $\mu\in\mathcal{M}_n^\infty$, when $\mathcal{C}=\kne$, \eqref{eq:LP_isoperimetric_inequality} is valid when $q=1$ and $p\geq 1$. The quantity $\mu(K;K)$ seems somewhat mysterious. We conclude with a conjecture that would supply a more quantifiable isoperimetric inequality for measures in $\mathcal{M}_n^\infty$ using the $(1/n)$-concavity. We follow a schema by Bobkov \cite{BS07}. Firstly, one can verify, if $\mu$ is a probability measure on $\R^n$, $K$ is a convex set and $r>0$,
that \begin{equation}
\label{eq:new_equation_mu_bd}
    \mu^{+}(\partial K)=\lim _{\epsilon \to 0^+} \frac{\mu\left((1-\epsilon) K+\epsilon rB_2^n\right)+\mu\left((1-\epsilon) \left(\R^n\setminus \text{int}(K)\right)+\epsilon rB_2^n\right)-1}{2 r \epsilon}.
\end{equation}
Indeed, this follows from the fact that $\mu(K;rB_2^n)=r\mu(K;B_2^n)$ and \eqref{eq:mixed_form}, which actually holds in this instance; $\partial \left(\R^n\setminus \text{int}(K)\right)$ is the same as $\partial K$, but with opposite outer-unit normal. 

Suppose also $\mu$ is an $F$-concave probability measure, in the sense of \eqref{eq:concave}, over a collection of sets $\mathcal{C}$, such that $F$ is $C^1$ smooth, $\{rB_2^n\}_{r>0}\subset \mathcal{C}$, and, if $K\in\mathcal{C}$, then so too is $(\R^n\setminus \text{int}(K))$. Then, setting $I=(F^{-1})^\prime\circ F$, from \eqref{eq:new_equation_mu_bd} one has that, for every $r>0$ and $K\in\mathcal{C}$,
\begin{equation}
\label{eq:F_iso}
\begin{split}
2r\mu^+(\partial K)&\geq I(\mu(K))(F(\mu(rB_2^n))-F(\mu(K)))
\\
&\quad+I(1-\mu(K))(F(\mu(rB_2^n))-F(1-\mu(K))).
\end{split}
\end{equation}
The result on $s$-concave probability measures by Bobkov followed by setting $F(t)=t^s$ in \eqref{eq:F_iso}, and some further analysis.

If $F$ satisfies $F(t)=-F(1-t)$, i.e., $F$ is an odd function about $t=\frac{1}{2}$, then \eqref{eq:F_iso} reduces to
\begin{equation}
\label{eq:F_iso_odd}
\mu^+(\partial A)\geq \frac{I(\mu(A))(F(\mu(rB_2^n)))}{r}.\end{equation}
The Ehrhard inequality is precisely the statement that $\gamma_n$ is $\Phi^{-1}$-concave. Furthermore, $\Phi^{-1}$ is odd about $\frac{1}{2}$, and so \eqref{eq:F_iso_odd} holds with $\mu=\gamma_n$ and $F=\Phi^{-1}$. Sending $r\to \infty$, and noting that $\lim_{r\to\infty}\frac{\Phi^{-1}(\mu(rB_2^n))}{r}=1$, recovers $I_{\gamma_n}=\Phi^{\prime}\circ\Phi^{-1}$. 
\\
\textbf{Conjecture} $\,$ Let $\mu$ be a probability measure over $\R^n$ that is $(1/n)$-concave over $\kne$. Then, for every $K,L\in \kne$ and $t\in (0,1),$
    $$\mu((1-t) (\R^n\setminus \text{int}(K))+ tL)\geq \left((1-t)\mu((\R^n\setminus \text{int}(K)))^\frac{1}{n}+t\mu(L)^\frac{1}{n}\right)^n.$$
\\
Suppose that the conjecture is true. Then, we could consider the case where $\mathcal{C}\supset \kne$ and $F(x)=x^{\frac{1}{n}}$ in \eqref{eq:F_iso}. Then $(F^{-1})^\prime(x)= nx^{n-1}$ and $I(x)=nx^\frac{n-1}{n}$. Consequently, \eqref{eq:F_iso} becomes, for every $r>0$ and $K\in\kne$,
\begin{equation}
\label{eq:iso_1_n}
\mu^+(\partial K) \geq \frac{n}{2r}\left[\mu(rB_2^n)^\frac{1}{n}\left(\mu(K)^\frac{n-1}{n}+\left(1-\mu(K)\right)^\frac{n-1}{n}\right) 
-1\right].
\end{equation}
Suppose $\mu$ has continuous density that contains the origin in the interior of its connected support. We then claim that we can define a function $r=r(\mu(K))$ so that the right-hand side of \eqref{eq:iso_1_n} is strictly positive. Indeed, we would need
$$\mu(rB_2^n) > \left(\mu(K)^\frac{n-1}{n}+\left(1-\mu(K)\right)^\frac{n-1}{n}\right)^{-n}=z(\mu(K)),$$ where
$$z(t):= \left(t^\frac{n-1}{n}+\left(1-t\right)^\frac{n-1}{n}\right)^{-n}.$$
Observe that $z(t)$ maps $[0,1]$ onto $[\frac{1}{2},1]$, with maximum $z(0)=z(1)=1$ and minimum $z\left(\frac{1}{2}\right)=\frac{1}{2}$. The function $r \to \mu(rB_2^n)$ is strictly increasing on its support. Thus, it has an inverse function, say $\mathcal{R}_\mu$. Therefore, the function we need is, for $\epsilon >0$, 
\begin{equation} 
\label{eq:small_iso_1_n}
r_\mu(t,\epsilon)=\mathcal{R}_\mu(z(t))+\epsilon,
\end{equation} as this yields $\mu(r_{\mu}(\mu(K),\epsilon)B_2^n) > \mu(\mathcal{R}_\mu(z(\mu(K)))B_2^n)=z(\mu(K))$. We make this formal:
    Let $\mu\in\mathcal{M}_n^\infty$. Then, under the condition that the above conjecture is true, one has, for every $K\in \kne$ and $\epsilon >0$
    $$\mu^+(\partial K) \geq \mathcal{I}_\mu(\mu(K),\epsilon),$$
    where
    \[\mathcal{I}_\mu(t,\epsilon) = \frac{n}{2r_\mu(t,\epsilon)}\left(\mu(r_\mu(t,\epsilon)B_2^n)^\frac{1}{n}\left(t^\frac{n-1}{n}+\left(1-t\right)^\frac{n-1}{n}\right) 
-1\right),\]
and $r_\mu(t,\epsilon)$ is the function defined in \eqref{eq:small_iso_1_n}.

 {\bf Acknowledgments: }We thank Professors Yong Huang and Deping Ye for taking a look at a draft of this work. We also thank the anonymous referees for their detailed comments, which vastly improved the presentation of the results herein.

{\bf Funding: } The first named author was funded by the Fondation Sciences Math\'ematiques de Paris Postdoctoral Program and the U.S. National Science Foundation's MSPRF fellowship via NSF grant DMS-2502744. The second named author was funded by the China National Postdoctoral Program for Innovative Talents of CPSF (BX20240102) and by the National Natural Science Foundation of China (12401252).

{\bf Conflicts of interest, Ethics Approval and Data Availability: } The authors have no competing interests to declare that are relevant to the content of this article and no ethics approval was needed for this research. No research data were used for this work.

\bibliographystyle{siam}

\end{document}